 \newtheorem{thm}{Theorem}[section]
 \newtheorem{lem}[thm]{Lemma}
 \newtheorem{prop}[thm]{Proposition}
 \theoremstyle{definition}
 \newtheorem{defn}[thm]{Definition}
 \theoremstyle{remark}
 \newtheorem{rem}[thm]{Remark}
 \newtheorem{assume}[thm]{Assumption}
 \numberwithin{equation}{section}
 \newcommand{\Real}{\mathbb{R}}
\begin{document}

\title[Curvature and hyperbolicity]{On curvature and hyperbolicity of monotone Hamiltonian systems}

\author{Paul W.Y. Lee}
\email{wylee@math.cuhk.edu.hk}
\address{Room 216, Lady Shaw Building, The Chinese University of Hong Kong, Shatin, Hong Kong}

\date{\today}

\begin{abstract}
Assume that a Hamiltonian system is monotone. In this paper, we give several characterizations on when such a system is Anosov. Assuming that a monotone Hamiltonian system has no conjugate point, we show that there are two distributions which are invariant under the Hamiltonian flow. We show that a monotone Hamiltonian flow without conjugate point is Anosov if and only if these distributions are transversal. We also show that if the reduced curvature of the Hamiltonian system is non-positive, then the flow is Anosov if and only if the reduced curvature is negative somewhere along each trajectory. This generalizes the corresponding results on geodesic flows in \cite{Eb}.
\end{abstract}

\maketitle

\section{Introduction}

In this paper, we consider when a Hamiltonian system is Anosov. Let us first recall the definition of a Anosov flow. Let $X$ be a vector field defined on a manifold $N$. Its flow is Anosov if there is a Riemannian metric $\left<\cdot,\cdot\right>$ and a splitting $TN=\Real X\oplus\Delta^+\oplus\Delta^-$ of the tangent bundle $TN$ of $N$ such that the followings hold.
\begin{enumerate}
\item $\Delta^\pm$ are distributions which are invariant under the flow $\varphi_t$ of $X$,
\item there are positive constants $c_1$ and $c_2$ such that $|d\varphi_{\pm t}(v)|\leq c_1e^{-c_2t}|v|$ for all $v$ in $\Delta^\pm$ and for all $t\geq 0$.
\end{enumerate}

In \cite{An}, it was shown that the geodesic flow on the unit sphere bundle of a compact manifold is Anosov if the sectional curvature of the manifold is everywhere negative. This result was generalized to monotone Hamiltonian systems in \cite{Ag2} using the curvature invariants introduced in \cite{Ag1}. On the other hand, it was shown in \cite{Eb} that there are many alternative characterizations of Anosov geodesic flow under the assumption that the flow has no conjugate point. Some of them were extended by \cite{CoIt} to Hamiltonian systems arising from the classical action functionals in calculus of variations.

In this paper, we extend the results in \cite{Eb,CoIt} to monotone Hamiltonian systems. Let us first recall the definition and the setup. Let $M$ be a manifold equipped with a symplectic structure $\omega$ and a Lagrangian distribution $\Lambda$. Let $H:M\to\Real$ be a fixed Hamiltonian and let us denote the corresponding Hamiltonian vector field by $\vec H$. Recall that $\vec H$ is defined by $\omega(\vec H,\cdot)=-dH(\cdot)$. Let $V_1$ and $V_2$ be two sections of $\Lambda$ and let $\left<V_1,V_2\right>$ be defined by
\[
\left<V_1,V_2\right>=\omega([\vec H,V_1],V_2).
\]
on $\Lambda$. It is not hard to see that $\left<\cdot,\cdot\right>$ defines a symmetric bilinear form on the distribution $\Lambda$. The Hamiltonian vector field $\vec H$ is monotone if $\left<\cdot,\cdot\right>$ defines a Riemannian metric on $\Lambda$.

The monotonicity of a Hamiltonian vector field $\vec H$ means essentially that the restriction of $H$ to each space $\Lambda_\alpha$ is strictly convex. More precisely, let $H:T^*N\to\Real$ be a Hamiltonian defined on the cotangent bundle $M=T^*N$ of a manifold $N$. Assume that the Hamiltonian is fibrewise strictly convex. That is $H|_{T^*N_x}$ is strictly convex for each $x$ in $N$. Then the Hamiltonian vector field $\vec H$ is monotone if $T^*N$ is equipped with the symplectic structure $\omega=d\theta$, where $\theta$ is the tautological one form defined by $\theta_\alpha(V)=\alpha(d\pi(V))$. These are the Hamiltonians considered in \cite{CoIt}. For example, if $H$ is given by the kinetic energy of a Riemannian metric on $N$, then $H$ is fibrewise strictly convex and the Hamiltonian vector field $\vec H$ is monotone. In this case, the flow of $\vec H$ is the geodesic flow.

More generally, one can consider twisted symplectic structure defined on $T^*N$ by $\omega=d\theta+\pi^*\eta$, where $\eta$ is any closed two form on $N$. Then the Hamiltonian vector field is monotone with respect to this twisted symplectic structure if and only if the Hamiltonian is fibrewise strictly convex. Note also that if $\vec H$ is monotone with respect to a symplectic structure $\omega$ and a Lagrangian distribution $\Lambda$, then we can slightly perturb the structures $(\omega,\Lambda)$ and $\vec H$ is still monotone.

Let $\alpha$ be a point in $M$. We say that the Hamiltonian flow $\varphi_t$ of $\vec H$ has no point conjugate to $\alpha$ if $d\varphi_t(\Lambda_\alpha)$ intersects transversely with $\Lambda_{\varphi_t(\alpha)}$ for all time $t$. If the Hamiltonian flow has no conjugate point, then there are two (measurable) distributions which are invariant under the Hamiltonian flow. This was first proved, in the case of the geodesic flow, in \cite{Gr} (see also \cite{CoIt} for an extension).

\begin{thm}\label{intro1}
Assume that the Hamiltonian vector field $\vec H$ is monotone and its flow $\varphi_t$ does not contain any conjugate point on $M$. Then there are (measurable) Lagrangian distributions $\Delta^\pm$ of $M$ which are invariant under $d\varphi_t$.
\end{thm}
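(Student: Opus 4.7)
\textbf{Proof plan for Theorem \ref{intro1}.}

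The plan is to construct $\Delta^\pm$ as limits under the flow of iterates of the Lagrangian distribution $\Lambda$, following the Green bundle construction originating in \cite{Gr} for geodesic flows and extended in \cite{CoIt} to Tonelli Hamiltonians. For each $\alpha \in M$ and each $T>0$, let
\[
L^+_T(\alpha) = d\varphi_{-T}\bigl(\Lambda_{\varphi_T(\alpha)}\bigr), \qquad L^-_T(\alpha) = d\varphi_{T}\bigl(\Lambda_{\varphi_{-T}(\alpha)}\bigr).
\]
Each $L^\pm_T(\alpha)$ is Lagrangian because $\varphi_t$ is a symplectomorphism. A short computation shows that the no-conjugate-point hypothesis is equivalent to $L^\pm_T(\alpha)\cap\Lambda_\alpha=\{0\}$ for all $T\ne 0$, i.e., $L^\pm_T(\alpha)$ is transverse to $\Lambda_\alpha$ in $T_\alpha M$.

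The main step is to show convergence of $L^\pm_T(\alpha)$ as $T\to\infty$. I would parametrize the open set of Lagrangians transverse to $\Lambda_\alpha$ by symmetric bilinear forms on $\Lambda_\alpha$, using a fixed auxiliary transverse Lagrangian as origin; write $S^\pm_T(\alpha)$ for the symmetric form corresponding to $L^\pm_T(\alpha)$. A direct calculation shows $S^\pm_T$ satisfies a Riccati ODE in $T$, and the forcing term of this ODE is precisely the positive definite form $\left<V_1,V_2\right>=\omega([\vec H,V_1],V_2)$ provided by the monotonicity hypothesis. Consequently $T\mapsto S^+_T(\alpha)$ is monotone increasing and $T\mapsto S^-_T(\alpha)$ is monotone decreasing in the partial order on symmetric forms. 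The no-conjugate-point assumption further forces $S^-_S(\alpha)\le S^+_T(\alpha)$ for all $S,T>0$, sandwiching both monotone families and producing pointwise limits $\Delta^\pm(\alpha)=\lim_{T\to\infty}L^\pm_T(\alpha)$.

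The required properties are then immediate. Lagrangianity of $\Delta^\pm$ holds because the Lagrangian Grassmannian is closed in the Grassmannian, so pointwise limits of Lagrangians are Lagrangian. Flow invariance is built into the construction: $d\varphi_t(L^+_T(\alpha))=L^+_{T-t}(\varphi_t(\alpha))$, so sending $T\to\infty$ gives $d\varphi_t(\Delta^+(\alpha))=\Delta^+(\varphi_t(\alpha))$, and similarly for $\Delta^-$. Measurability of $\alpha\mapsto\Delta^\pm(\alpha)$ follows because each $\alpha\mapsto L^\pm_T(\alpha)$ is smooth into the Lagrangian Grassmannian of $TM$ and $\Delta^\pm$ is a pointwise limit of such maps.

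The principal obstacle is the Riccati calculation together with the sandwich estimate. One must choose the affine coordinates on the Lagrangian Grassmannian so that the derivative $\frac{d}{dT}S^\pm_T$ can be identified explicitly with the positive definite form $\left<\cdot,\cdot\right>$ on $\Lambda$; only at that point does the monotonicity of $\vec H$ enter the argument decisively. Verifying the comparison $S^-_S\le S^+_T$ is then a consequence of the no-conjugate-point condition preventing the two curves of Lagrangians from ever meeting in the affine chart.
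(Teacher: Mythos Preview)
Your plan is correct and is the standard Green-bundle construction: identify $L^\pm_T(\alpha)$ with the Jacobi curve $J_\alpha(\pm T)$, use monotonicity of $\vec H$ to make the affine coordinate $S^\pm_T$ monotone in $T$, and use absence of conjugate points to produce the sandwich inequality and hence the limits $\Delta^\pm$. The paper follows the same mechanism but with one organizational difference worth noting. Rather than working on $TM$, the paper first passes to the symplectic reduction $\mathfrak V=\vec H^\angle/\Real\vec H$, checks via Proposition~\ref{noconjugatepoint} that the reduced Jacobi curve $\tilde J_\alpha$ again has no conjugate points, and then runs the Riccati argument there using canonical frames and the matrix comparison Lemma~\ref{Ubdd}; the distributions $\tilde\Delta^\pm\subset\mathfrak V$ are finally lifted to $\Delta^\pm\subset T\Sigma_c$ by saturating with $\Real\vec H$. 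For Theorem~\ref{intro1} alone your unreduced route is shorter; the paper's reduction is set up because the later Anosov and entropy statements live on a fixed energy level and require the reduced objects. One small imprecision in your sketch: what drives the monotonicity of $S^\pm_T$ is not the forcing term of a Riccati equation in $T$ but simply the fact that $\frac{d}{dT}S^\pm_T$ is, up to congruence and sign, the positive-definite form $\langle\cdot,\cdot\rangle$ on $J_\alpha(\pm T)$; the genuine Riccati equation in the paper is in the other time variable. This does not affect your conclusion.
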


By the work of \cite{AgGa}, we can define the reduced curvature $\mathfrak{\tilde R}$ of a monotone Hamiltonian vector field $\vec H$ (see Section \ref{Reduction} for the definition). Under the assumptions of Theorem \ref{intro1}, we can show that the integral of the trace $\tilde{\mathfrak r}$ of $\tilde{\mathfrak R}$ with respect to any invariant measure (in particular the Liouville measure) of $\vec H$ is non-positive. Moreover, this integral vanishes only if $\tilde{\mathfrak r}$ vanishes. This extends the results of \cite{Ho,Gr,In} to our setting. More precisely, we have

\begin{thm}\label{totalreduce}
Let $c$ be a regular value of $H$ and assume that the Hamiltonian vector field $\vec H$ is monotone. Let $\mu$ be an invariant measure of the flow $\varphi_t$ of $\vec H$ on $\Sigma_c:=H^{-1}(c)$. Assume that $\Sigma_c$ is compact and $\varphi_t$ has no conjugate point on $\Sigma_c$. Then the following holds
\[
\int_{\Sigma_c}\mathfrak r_\alpha \,d\mu(\alpha)\leq 0.
\]
Moreover, equality holds only if $\mathfrak r\equiv 0$ on the support of $\mu$.
\end{thm}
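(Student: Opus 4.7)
The plan is to adapt the Hopf--Green integral identity to the reduced Jacobi-curve setting. The idea is to convert the invariant Lagrangian distributions $\Delta^\pm$ supplied by Theorem \ref{intro1} into symmetric operator fields satisfying a matrix Riccati equation along the flow, integrate their trace against $\mu$, and use invariance of $\mu$ to kill the derivative term.

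First I would, for each $\alpha\in\Sigma_c$, pass to the symplectic reduction of Section \ref{Reduction} and identify $\Delta^\pm_\alpha$ with a Lagrangian subspace of the reduced space that is transverse to the reduction of $\Lambda_\alpha$. Any such transverse Lagrangian is the graph of a symmetric operator $U^\pm_\alpha$ with respect to the metric provided by the monotone structure. Measurability of $\Delta^\pm$ gives measurability of $U^\pm$, and $d\varphi_t$-invariance of $\Delta^\pm$ translates, in the standard Jacobi-curve fashion, into the matrix Riccati equation
\[
\ddt U^\pm + (U^\pm)^2 + \tilde{\mathfrak R} = 0
\]
along each orbit. Taking traces gives $\ddt \tr U^\pm + \tr\bigl((U^\pm)^2\bigr) + \mathfrak r = 0$.

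Next I would integrate this identity against $\mu$. This requires boundedness of $\tr U^\pm$ on $\Sigma_c$, which I would obtain from the Green-bundle description of $\Delta^\pm$ as the limit of $d\varphi_{\mp T}\bigl(\Lambda_{\varphi_{\pm T}\alpha}\bigr)$ as $T\to\infty$: monotonicity of $\vec H$ forces the Riccati matrices of these finite-$T$ Lagrangians to be monotone in $T$, and compactness of $\Sigma_c$ then yields a uniform bound on the limit. With this bound in hand, invariance of $\mu$ makes the derivative term drop out,
\[
\int_{\Sigma_c}\ddt\Big|_{t=0}\tr U^\pm_{\varphi_t(\alpha)}\,d\mu(\alpha)=\ddt\Big|_{t=0}\int_{\Sigma_c}\tr U^\pm_{\varphi_t(\alpha)}\,d\mu(\alpha)=0,
\]
leaving
\[
\int_{\Sigma_c}\mathfrak r_\alpha\,d\mu(\alpha)=-\int_{\Sigma_c}\tr\bigl((U^\pm)^2\bigr)\,d\mu(\alpha)\leq 0,
\]
since each $U^\pm_\alpha$ is symmetric. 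For the equality case, vanishing of the integral forces $\tr((U^\pm)^2)=0$, hence $U^\pm=0$, $\mu$-almost everywhere on $\Sigma_c$; substitution back into the Riccati equation yields $\mathfrak r=0$ $\mu$-a.e. Continuity of $\mathfrak r$ together with flow-invariance of $\mathrm{supp}(\mu)$ then upgrades this to $\mathfrak r\equiv 0$ on $\mathrm{supp}(\mu)$.

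The main obstacle I expect is the uniform bound on $U^\pm$ over $\Sigma_c$. Because $\Delta^\pm$ is only guaranteed to be measurable, transversality to $\Lambda$ at each individual point does not by itself prevent $\|U^\pm_\alpha\|$ from blowing up as $\alpha$ varies. The Green-bundle / monotone-Riccati argument sketched above is what bridges this gap, and it is precisely here that the monotonicity assumption on $\vec H$ is decisive: it is what supplies the Riccati comparison principle that keeps the approximating shape operators in a bounded monotone family and thereby controls $U^\pm$ in the limit.
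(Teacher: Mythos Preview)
Your proposal is correct and follows essentially the same route as the paper: represent the invariant Lagrangian bundle (coming from Theorem~\ref{intro1}) in the reduced frame as the graph of a symmetric operator satisfying the matrix Riccati equation, take traces, and integrate against the invariant measure so that the derivative term cancels. The only cosmetic difference is that the paper integrates the traced Riccati identity over the time interval $[0,1]$ before applying $\mu$-invariance (so only integrability of $\tr U^\pm$ is needed, not the interchange of $\ddt$ and $\int d\mu$), but your Green-bundle monotonicity argument for a uniform bound on $U^\pm$ handles this just as well via the Riccati comparison in the appendix.
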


We also show that the flow of the Hamiltonian vector field $\vec H$ is Anosov assuming that the reduced curvature is negative.

\begin{thm}\label{anosov}
Let $c$ be a regular value of $H$. Assume that the Hamiltonian vector field is monotone and the reduced curvature is bounded above and below by two negative constants on $\Sigma_c$. Then the flow of $\vec H$ is Anosov on $\Sigma_c$.
\end{thm}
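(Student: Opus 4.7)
The plan is to deduce the Anosov property from the existence and uniform transversality of the invariant Lagrangian distributions $\Delta^\pm$ supplied by Theorem \ref{intro1}, using a Riccati analysis driven by the reduced curvature $\tilde{\mathfrak R}$.

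First I would verify that the hypothesis $\tilde{\mathfrak R}\le -k_1^2\,\mathrm{Id}$ (with $k_1>0$) already rules out conjugate points on $\Sigma_c$. Along a trajectory of $\vec H$, a Lagrangian subspace transversal to $\Lambda$ is represented by a symmetric slope operator $S(t)$ satisfying a matrix Riccati equation of the form $\dot S+S^2+\tilde{\mathfrak R}=0$; a conjugate point would force $S$ to blow up in finite time, but Riccati comparison with the scalar ODE $\dot s+s^2-k_1^2=0$ keeps $S$ bounded. Hence Theorem \ref{intro1} applies and yields invariant Lagrangian distributions $\Delta^\pm$ on $\Sigma_c$.

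Next I would use the construction from \cite{Gr,CoIt}, realising $\Delta^\pm$ as limits of backward/forward pull-backs of $\Lambda$. The slopes $S^\pm(t)$ representing $\Delta^\pm$ along a trajectory obey the same Riccati equation but with opposite limiting boundary data. By the monotonicity of the Riccati flow and the two-sided bound $-k_2^2\,\mathrm{Id}\le\tilde{\mathfrak R}\le -k_1^2\,\mathrm{Id}$, one obtains
\[
k_1\,\mathrm{Id}\le S^+(t)\le k_2\,\mathrm{Id},\qquad -k_2\,\mathrm{Id}\le S^-(t)\le -k_1\,\mathrm{Id},
\]
uniformly in $t$ and in the trajectory. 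In particular $S^+-S^-\ge 2k_1\,\mathrm{Id}>0$, so $\Delta^+$ and $\Delta^-$ are transversal to each other, each is transversal to $\Real\vec H$, and we have a splitting $T\Sigma_c=\Real\vec H\oplus\Delta^+\oplus\Delta^-$ with uniform angles.

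To extract the Anosov contraction, let $v\in\Delta^+_\alpha$ and let $\tilde v(t)$ denote the image of $d\varphi_t(v)$ in the reduced tangent space along $\varphi_t(\alpha)$. Because $\Delta^+$ has slope $S^+$, the reduced norm satisfies
\[
\frac{d}{dt}\log|\tilde v(t)|^2=-2\,\frac{\langle S^+(t)\tilde v(t),\tilde v(t)\rangle}{|\tilde v(t)|^2}\le -2k_1,
\]
so $|\tilde v(t)|\le e^{-k_1 t}|\tilde v(0)|$ for $t\ge 0$; a symmetric argument handles $\Delta^-$ for $t\le 0$. I would then manufacture a Riemannian metric on $\Sigma_c$ by declaring the three summands of the splitting mutually orthogonal, taking the pulled-back reduced metric on $\Delta^\pm$, and any convenient norm on $\Real\vec H$. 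The uniform bounds on $S^\pm$ ensure that this metric is comparable to any background metric on the compact set $\Sigma_c$, so the reduced exponential estimate upgrades directly to the Anosov condition as stated in the introduction.

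The main obstacle I anticipate is book-keeping in the Agrachev--Gamkrelidze reduction: one must check that in the general monotone framework $(M,\omega,\Lambda)$ the slope formalism really produces a matrix Riccati equation with $\tilde{\mathfrak R}$ as forcing, and that the invariant distributions $\Delta^\pm$ built in Theorem \ref{intro1} are identified with the limiting slopes $S^\pm$. Once these identifications are clean, the Riccati comparison, the uniform transversality, and the construction of the Anosov metric follow the classical pattern for geodesic flows on negatively curved manifolds.
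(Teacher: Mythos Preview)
Your approach is essentially the paper's own: rule out conjugate points via Riccati comparison, then use comparison with the constant-curvature Riccati solutions to trap the slope operators of $\tilde\Delta^\pm$ between $\pm k_1 I$ and $\pm k_2 I$ (this is Lemma~\ref{nonpositiveRiccati} in the paper), and read off the exponential contraction directly. Two small points: the displayed logarithmic derivative is correct for the \emph{horizontal} component $|\tilde v(t)^h|$ rather than the full reduced norm, after which the uniform slope bounds make the two comparable (exactly as in the proof of Theorem~\ref{negativecur}); and the paper's emphasis is precisely that the two-sided curvature bound replaces compactness of $\Sigma_c$, so your final appeal to compactness is unnecessary.
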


The above theorem is proved in \cite{Ag2} under the assumption that $\Sigma_c$ is compact. We give a different proof which relaxes this compactness assumption to a lower curvature bound.

If the invariant distributions $\Delta^\pm$ defined in Theorem \ref{intro1} are everywhere transversal, then it was shown in \cite{Eb} that the geodesic flow is Anosov. An extension of this result can also be found in \cite{CoIt}. By combining a reduction procedure together with the analysis in \cite{Eb}, we obtain the following result.

\begin{thm}\label{main2}
Suppose that the assumption of Theorem \ref{intro1} are satisfied. Let $c$ be a regular value of $H$ and assume that $\Sigma_c=H^{-1}(c)$ is compact. Then the following are equivalent.
\begin{enumerate}
\item The flow $\varphi_t$ is Anosov on $\Sigma_c$,
\item $\Delta^+$ and $\Delta^-$ are transversal in $T\Sigma_c$,
\item $\Delta^+\cap\Delta^-=\textbf{span}\{\vec H\}$.
\end{enumerate}
\end{thm}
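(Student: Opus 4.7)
The plan is to prove (1) $\Rightarrow$ (3) $\Leftrightarrow$ (2) $\Rightarrow$ (1). Two structural facts streamline everything: because the Hamiltonian flow preserves $H$, each flow-invariant Lagrangian distribution $\Delta^\pm$ from Theorem \ref{intro1} sits in $T\Sigma_c=(\Real\vec H)^\omega$, and the Lagrangian property then forces $\vec H\in\Delta^+\cap\Delta^-$. Consequently $\Delta^\pm$ are $n$-dimensional subspaces of the $(2n-1)$-dimensional $T\Sigma_c$, both containing $\vec H$, and the dimension identity $\dim(\Delta^++\Delta^-)+\dim(\Delta^+\cap\Delta^-)=2n$ immediately yields (2) $\Leftrightarrow$ (3).

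For (1) $\Rightarrow$ (3) I would invoke uniqueness of the invariant Lagrangians. If $\varphi_t$ is Anosov with splitting $T\Sigma_c=\Real\vec H\oplus E^+\oplus E^-$, then $\Real\vec H\oplus E^\pm$ are flow-invariant; the exponential contraction of $d\varphi_t$ on $E^\pm$ combined with the invariance of $\omega$ forces these to be isotropic of maximal dimension, hence Lagrangian. They must coincide with the limiting Lagrangians produced in Theorem \ref{intro1}, since under the no-conjugate-point hypothesis the limits of $d\varphi_t(\Lambda_{\varphi_{-t}\alpha})$ as $t\to\mp\infty$ are uniquely characterized by their forward/backward behavior. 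The Anosov transversality $E^+\cap E^-=\{0\}$ then gives $\Delta^+\cap\Delta^-=\spn\{\vec H\}$.

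The substantive implication is (2)/(3) $\Rightarrow$ (1). I would descend to the reduced bundle $E:=T\Sigma_c/\Real\vec H$, a symplectic vector bundle on which $d\varphi_t$ induces a symplectic cocycle; the images $\tilde\Delta^\pm$ of $\Delta^\pm$ become $(n-1)$-dimensional Lagrangian subbundles, and (3) translates to $E=\tilde\Delta^+\oplus\tilde\Delta^-$. The monotonicity of $\vec H$ yields a natural inner product on $E$, and the reduction procedure of \cite{AgGa} provides the reduced Jacobi equation $\ddot J+\tilde{\mathfrak R}J=0$ along each trajectory. Compactness of $\Sigma_c$ supplies (i) uniform two-sided operator bounds on $\tilde{\mathfrak R}$ and (ii) a uniform positive lower bound on the transversality angle between $\tilde\Delta^+$ and $\tilde\Delta^-$ along any orbit. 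With these ingredients the scheme of \cite{Eb} transports to the reduced setting: the no-conjugate-point hypothesis shows that Jacobi fields with initial value in $\tilde\Delta^-$ are uniformly bounded on $[0,\infty)$ (and symmetrically $\tilde\Delta^+$ on $(-\infty,0]$), after which a limiting argument promotes boundedness to exponential decay. Lifting the resulting Anosov splitting of $E$ back to $T\Sigma_c$ by choosing $\vec H$-orthogonal representatives of $\tilde\Delta^\pm$ produces the Anosov splitting for $\varphi_t$.

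I expect the main obstacle to be the step that upgrades uniform boundedness of stable Jacobi fields to exponential contraction. Eberlein's original argument leans on Riemannian features such as orthogonal Jacobi-field comparison and the concavity properties of the Riccati operator; here one has only no-conjugate-point plus transversality in an abstract symplectic setting, so the argument must be executed in the reduced framework. The delicate point is extracting, from any hypothetical failure of exponential decay, a nontrivial limit Jacobi field whose behavior contradicts either no-conjugate-point or the transversality of $\tilde\Delta^\pm$; controlling this limit demands the uniformity provided by compactness together with an Arzel\`a--Ascoli argument applied to the Jacobi cocycle. Once this upgrade is secured, the remainder of the proof is a routine assembly of the Anosov splitting from the reduced one.
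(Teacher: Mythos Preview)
Your proposal is correct and follows essentially the same route as the paper: pass to the reduced bundle $\mathfrak V=\vec H^\angle/\Real\vec H$, prove the equivalences there (the paper's Theorem \ref{main1}) by characterizing $\tilde\Delta^\pm$ as the sets of vectors with bounded forward/backward horizontal component (Lemma \ref{bddDelta}, Theorem \ref{deltachar}), upgrade boundedness to exponential decay via the sub-multiplicativity argument of \cite[Lemma 3.12]{Eb} (Lemmas \ref{decay} and \ref{anosovaction}), and then lift the reduced hyperbolic splitting back to an Anosov splitting on $\Sigma_c$ using \cite[Proposition 5.1]{Wo}. The only cosmetic difference is your handling of (1) $\Rightarrow$ (3): you identify the Anosov stable/unstable bundles with $\Delta^\pm$ via a uniqueness-of-invariant-Lagrangians argument, whereas the paper gets this direction for free from the chain of equivalences (Anosov trivially rules out bounded reduced Jacobi fields, which is condition (3) of Theorem \ref{main1}); your version implicitly uses Lemma \ref{bddDelta} anyway, so the two arguments are really the same.
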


Under the assumption that the reduced curvature is everywhere non-positive, we also obtain the following which generalize another result of \cite{Eb}.

\begin{thm}\label{nonpositive}
Let $c$ be a regular value of $H$ and assume that $\Sigma_c=H^{-1}(c)$ is compact.
Assume that the monotone Hamiltonian vector field $\vec H$ has non-positive reduced curvature on $\Sigma_c$. Then the flow $\varphi_t$ of $\vec H$ is Anosov on $\Sigma_c$ if and only if, for each $\alpha$ in $\Sigma_c$, there is a time $t$ such that the reduced curvature $\tilde {\mathfrak R}$ of $\vec H$ satisfies $\left<\tilde{\mathfrak R}_{\varphi_t(\alpha)}\tilde v,\tilde v\right><0$ for some $t$ and for some vector $\tilde v$ in $\tilde\Lambda_{\varphi_t(\alpha)}$.
\end{thm}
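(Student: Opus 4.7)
The plan is to use Theorem~\ref{main2} as a bridge: since $\Sigma_c$ is compact, the flow of $\vec H$ is Anosov on $\Sigma_c$ if and only if $\Delta^+\cap\Delta^-=\spn\{\vec H\}$ inside $T\Sigma_c$. Both implications of the present theorem therefore reduce to relating this Lagrangian intersection condition with the ``somewhere negative'' condition on the reduced curvature $\tilde{\mathfrak R}$, following Eberlein's scheme in \cite{Eb} transplanted to the reduced bundle $\tilde\Lambda$ via the Agrachev--Gamkrelidze formalism.

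For the forward direction, assume the flow is Anosov. If there were an orbit through $\alpha\in\Sigma_c$ along which $\langle\tilde{\mathfrak R}\tilde v,\tilde v\rangle\equiv 0$ for every $\tilde v$, then by non-positivity and polarization $\tilde{\mathfrak R}$ would vanish identically along that orbit, so the reduced Jacobi equation $\ddot{\tilde J}+\tilde{\mathfrak R}\tilde J=0$ would collapse to $\ddot{\tilde J}=0$ and permit only linear-in-$t$ growth of reduced Jacobi fields. This contradicts the exponential growth forced by the Anosov stable/unstable splitting, applied to a vector in $\Delta^-_\alpha\setminus\spn\{\vec H\}$ whose projection to $\tilde\Lambda$ is necessarily nonzero and must grow exponentially.

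For the converse, suppose $v\in(\Delta^+_\alpha\cap\Delta^-_\alpha)\setminus\spn\{\vec H\}$. Compactness of $\Sigma_c$ combined with the non-expanding characterization of $\Delta^\pm$ from Theorem~\ref{intro1} yields a uniform bound on $|d\varphi_t(v)|$ for all $t\in\Real$, so the projected reduced Jacobi field $\tilde J(t)\in\tilde\Lambda_{\varphi_t(\alpha)}$ has bounded norm on $\Real$. Setting $f(t)=|\tilde J(t)|^2$, the reduced Jacobi equation together with $\tilde{\mathfrak R}\le 0$ gives
\[
\ddot f(t)=2|\dot{\tilde J}(t)|^2-2\langle\tilde{\mathfrak R}_{\varphi_t(\alpha)}\tilde J(t),\tilde J(t)\rangle\ge 0.
\]
A bounded convex function on $\Real$ is constant, forcing $\dot{\tilde J}\equiv 0$ and $\langle\tilde{\mathfrak R}\tilde J,\tilde J\rangle\equiv 0$; non-positivity plus polarization then give $\tilde{\mathfrak R}\tilde J\equiv 0$ along the whole orbit. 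Running this argument simultaneously for every $v\in\Delta^+_\alpha\cap\Delta^-_\alpha$ produces a subspace of reduced Jacobi fields parallel along the orbit and lying pointwise in $\ker\tilde{\mathfrak R}$; combining with the Lagrangian property of $\Delta^\pm$ to control how this subspace sits in $\tilde\Lambda_{\varphi_t(\alpha)}$, the hypothesis at $t_0$ supplies a vector of negative reduced curvature incompatible with this null subspace, so $v\in\spn\{\vec H\}$ and Theorem~\ref{main2} finishes the proof.

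The main obstacle is the closing step of the converse: the hypothesis supplies a vector $\tilde w$ of negative reduced curvature at a single time $t_0$, whereas the convexity argument merely produces a parallel reduced Jacobi field $\tilde J$ lying in $\ker\tilde{\mathfrak R}$ along the whole orbit, and \emph{a priori} $\tilde J(t_0)$ need not coincide with $\tilde w$. Resolving this requires either showing that the subspace of parallel reduced Jacobi fields generated by $\Delta^+\cap\Delta^-$ fills $\tilde\Lambda_{\varphi_{t_0}(\alpha)}$, using the Lagrangian nature of $\Delta^\pm$ and the symplectic reduction, or else exploiting symmetry of $\tilde{\mathfrak R}$ to extract a direct contradiction from the coexistence of a null direction and a negative direction. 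A secondary technical point is the uniform boundedness of $|d\varphi_t(v)|$ for $t\in\Real$, which follows from the characterization of $\Delta^\pm$ as limits of $d\varphi_{\mp T}(\Lambda_{\varphi_{\pm T}(\alpha)})$ underlying Theorem~\ref{intro1}.
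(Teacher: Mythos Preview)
Your overall strategy—reducing to the intersection $\Delta^+\cap\Delta^-$ via Theorem~\ref{main2} and then producing a parallel reduced Jacobi field from a vector in that intersection—is Eberlein's, and is close in spirit to the paper's route. The paper, however, bridges through a different characterization: Lemma~\ref{anosovJo} shows that under non-positive reduced curvature the flow is Anosov on $\Sigma_c$ if and only if $\bigcap_{t\in\Real}\tilde J^o_\alpha(t)=\emptyset$ for every $\alpha$. The converse is then Proposition~\ref{nonpos2}: if the flow is not Anosov one extracts a fixed vector $b$ with $b^T\tilde F(0)=b^T\tilde F(t)$ for all $t$, and differentiating gives $\tilde{\mathcal R}_\alpha(t)\,b\equiv 0$. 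Your convexity argument on $f(t)=|\tilde J(t)|^2$ is a cleaner alternative to the paper's Riccati/Hopf device (Proposition~\ref{nonpos1}) and lands on the same conclusion: a parallel direction $b$ lying in $\bigcap_t\ker\tilde{\mathcal R}_\alpha(t)$. One technical point: the uniform bound on the horizontal part that you invoke does not come from Theorem~\ref{intro1}; in the paper it is established inside the proof of Lemma~\ref{anosovJo}, using the monotonicity of $t\mapsto|\widetilde{d\varphi}_t(\tilde v)^h|$ for vertical $\tilde v$ under non-positive curvature.

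The obstacle you flag in your closing step is genuine and is not an artifact of your method. The paper's Propositions~\ref{nonpos1} and~\ref{nonpos2} in fact establish the \emph{per-direction} biconditional: the flow is Anosov if and only if for each $\alpha$ \emph{and each nonzero $b\in\Real^{n-1}$} there exists $t$ with $b^T\tilde{\mathcal R}_\alpha(t)\,b<0$. In the proof of Proposition~\ref{nonpos2} the contradiction is drawn against the \emph{specific} $b$ produced from the non-Anosov assumption, so the hypothesis being invoked is the per-direction one. With that reading your converse closes immediately: the parallel field you construct satisfies $\langle\tilde{\mathfrak R}_{\varphi_t(\alpha)}\tilde J(t),\tilde J(t)\rangle=0$ for all $t$, contradicting the per-direction hypothesis applied to the direction $\tilde J(0)$. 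Neither your argument nor the paper's, as written, derives Anosov from the literally stated ``some direction along each orbit'' hypothesis; in particular your suggestion~(a)—that the subspace of parallel reduced Jacobi fields fills all of $\tilde\Lambda$—is not justified, since $\tilde\Delta^+\cap\tilde\Delta^-$ may well be one-dimensional.
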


Using the result in Theorem \ref{main2}, we can estimate the measure theoretic entropy for invariant measures of $\varphi_t$ in terms of the reduced curvature $\tilde{\mathfrak R}$. This generalizes the corresponding results in \cite{FrMa} and \cite{In}.

\begin{thm}\label{entropy1}
Let $c$ be a regular value of $H$. Let $\mu$ be an invariant measure of the flow $\varphi_t$ of $\vec H$ on the compact manifold $\Sigma_c:=H^{-1}(c)$. Assume that $\varphi_t$ has no conjugate point on the support of $\mu$. Then the following holds
\[
h_\mu(\varphi_t) \leq (n-1)^{1/2}\left(-\int_{\Sigma_c}\textbf{tr}\,\tilde{\mathfrak R}_\alpha d\mu(\alpha)\right)^{1/2}.
\]
Moreover, equality holds only if $\tilde{\mathfrak R}$ is constant on the support of $\mu$.
\end{thm}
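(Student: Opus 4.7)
The plan is to adapt the Freire--Ma\~n\'e--Innami strategy to the symplectic-reduction framework of this paper. First I would invoke Theorem \ref{intro1} to produce the measurable invariant Lagrangian distribution $\Delta^+$, and then pass to the symplectic reduction to obtain a measurable invariant Lagrangian distribution $\tilde\Delta^+$ in the reduced bundle. The no-conjugate-point hypothesis on $\mathrm{supp}\,\mu$ guarantees that $\tilde\Delta^+$ is transverse to the vertical $\tilde\Lambda$ on a set of full $\mu$-measure; there $\tilde\Delta^+$ is the graph of a measurable symmetric operator $S^+:\tilde\Lambda\to\tilde\Lambda$.

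Differentiating this graph condition along the Hamiltonian flow yields the matrix Riccati equation
\[
\dot S^+ + (S^+)^2 + \tilde{\mathfrak R} = 0.
\]
Taking trace and integrating against the invariant probability $\mu$ (once $\tr S^+$ is known to be $\mu$-integrable) kills the total-derivative term and gives
\[
\int_{\Sigma_c}\tr\bigl((S^+)^2\bigr)\,d\mu \;=\; -\int_{\Sigma_c}\tr\tilde{\mathfrak R}\,d\mu.
\]
Pointwise Cauchy--Schwarz on the $(n-1)$-dimensional fibre of $\tilde\Lambda$ gives $(\tr S^+)^2 \leq (n-1)\,\tr((S^+)^2)$, and a further application of Jensen's inequality then produces
\[
\int_{\Sigma_c}\tr S^+\,d\mu \;\leq\; \left(-(n-1)\int_{\Sigma_c}\tr\tilde{\mathfrak R}\,d\mu\right)^{1/2}.
\]

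To obtain the entropy bound I would then apply Ruelle's inequality, which dominates $h_\mu(\varphi_t)$ by the $\mu$-integral of the sum of positive Lyapunov exponents. Because the reduced flow is symplectic, Lyapunov exponents come in $\pm$ pairs and the non-negative ones are realized on a Lagrangian subspace at $\mu$-a.e.\ point. The standard Freire--Ma\~n\'e observation that $\tilde\Delta^+$ contains those non-negative-exponent directions (for the distribution constructed in Theorem \ref{intro1}) implies that the sum of positive Lyapunov exponents at $\mu$-a.e.\ $\alpha$ is at most $\tr S^+(\alpha)$. Combining with the previous display gives the asserted estimate.

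The principal obstacle will be the integrability of $\tr S^+$: since $\Delta^+$ is only measurable, $\|S^+\|$ could a priori blow up where $\tilde\Delta^+$ becomes nearly tangent to $\tilde\Lambda$, and both the trace identity and the integration of $\tr S^+$ require justification. I plan to handle this by comparing $S^+$ with the analogous operator $S^-$ associated to $\Delta^-$, exploiting the natural partial order $S^-\leq S^+$ on transverse Lagrangian graphs together with compactness of $\Sigma_c$ and invariance of $\mu$. For the equality clause, equality in pointwise Cauchy--Schwarz forces $S^+=\sigma\,\mathrm{Id}$ $\mu$-a.e.\ for a scalar function $\sigma$; the Riccati equation then yields $\tilde{\mathfrak R}=-(\dot\sigma+\sigma^2)\,\mathrm{Id}$, and equality in Ruelle's inequality combined with invariance of $\mu$ forces $\dot\sigma\equiv 0$ on $\mathrm{supp}\,\mu$, so that $\tilde{\mathfrak R}$ is constant along $\mu$-a.e.\ orbit.
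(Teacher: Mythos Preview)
Your strategy is essentially the paper's own, but you have chosen the wrong Green bundle. In this paper's conventions $\Delta^+_\alpha=\lim_{t\to+\infty}J_\alpha(t)$, and by Lemma~\ref{bddDelta} and Theorem~\ref{negativecur} the reduction $\tilde\Delta^+$ is the \emph{stable} bundle: it contains $E^s$, not $E^u$, and its Riccati operator satisfies $U^+\le U^-$ (Lemma~\ref{Riccatibdd}), indeed $U^+\le 0$ whenever $\tilde{\mathfrak R}\le 0$ (Lemma~\ref{nonpositiveRiccati}). Hence your assertion that ``$\tilde\Delta^+$ contains those non-negative-exponent directions'' is false, and with that choice the inequality $\int\tr S^+\,d\mu\le(\cdots)^{1/2}$ yields no entropy bound since the left side is non-positive. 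The paper works throughout with $\tilde\Delta^-$ and $U^-$, showing $E^u_\alpha\subseteq\tilde\Delta^-_\alpha\subseteq E^u_\alpha\oplus E^0_\alpha$; once you swap $+$ for $-$, your outline coincides with the paper's proof (the paper invokes Pesin's formula \cite{Pe} rather than Ruelle's inequality, but either gives the needed upper bound).

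Two smaller remarks. The integrability worry is unnecessary: compactness of $\Sigma_c$ gives a uniform lower bound $\tilde{\mathfrak R}\ge -k^2I$, and the Riccati comparison of Lemma~\ref{Riccatibdd} then yields $-kI\le U^+\le U^-\le kI$ uniformly, so $\tr U^\pm$ is bounded and the trace identity is immediate. In the equality clause, the reason $\sigma$ is constant is not ``equality in Ruelle combined with invariance'' but equality in the Jensen step, which forces $\tr\bigl((U^-)^2\bigr)=(n-1)\sigma^2$ to be $\mu$-a.e.\ constant; the Riccati equation then gives $\tilde{\mathfrak R}=-\sigma^2 I$ on $\operatorname{supp}\mu$, exactly as in the paper.
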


We remark that a lower estimate under the assumption that the reduced curvature is non-positive was done in \cite{Ch} which generalizes the earlier work of \cite{BaWo,OsSa}.

Finally, we also show that the following generalization of the result in \cite{PaPe} is also possible.

\begin{thm}\label{entropy2}
Let $c$ be a regular value of $H$. Let $\mu$ be an invariant measure of the flow $\varphi_t$ of $\vec H$ on the compact manifold $\Sigma_c:=H^{-1}(c)$. Then the following holds
\[
h_\mu(\varphi_t) \leq \frac{1}{2}\int_{\Sigma_c}\sum_{i=1}^{n-1}|1-\lambda_i(\alpha)|d\mu(\alpha)
\]
for any invariant measure $\mu$ of $\varphi_t$ on $\Sigma_c$ and where $\lambda_i(\alpha)$ are eigenvalues of the operator $\tilde{\mathfrak R}_\alpha$.
\end{thm}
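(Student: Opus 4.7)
The plan is to combine Ruelle's inequality with a direct comparison bound for the top Lyapunov exponents of the linearized reduced flow. Since $\Sigma_c$ is compact, Ruelle's inequality applied to the diffeomorphism $\varphi_1$ gives
\[
h_\mu(\varphi_1)\leq \int_{\Sigma_c}\sum_{\chi_i(\alpha)>0}\chi_i(\alpha)\,d\mu(\alpha),
\]
where $\chi_i$ are the Lyapunov exponents of $d\varphi_1$ on $T\Sigma_c$. The direction of $\vec H$ is $d\varphi_t$-invariant and carries exponent zero, so the nontrivial exponents come from the action of $d\varphi_t$ on the quotient $T_\alpha\Sigma_c/\Real\vec H$, which inherits a symplectic structure and a Lagrangian distribution $\tilde\Lambda$ of rank $n-1$ from the reduction in Section \ref{Reduction}.

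Using the Agrachev--Gamkrelidze canonical frame, I would represent the linearized flow on this quotient by a linear system
\[
\dot Z=A(t)Z,\qquad A(t)=\begin{pmatrix}0 & I\\ -\tilde{\mathfrak R}_{\varphi_t(\alpha)} & 0\end{pmatrix},
\]
written in a Darboux basis adapted to a Lagrangian complement $\tilde\Lambda'$ of $\tilde\Lambda$. I would extend the inner product $\left<\cdot,\cdot\right>$ on $\tilde\Lambda$ to the whole quotient by identifying $\tilde\Lambda'$ with $\tilde\Lambda$ through the symplectic form and declaring the splitting orthogonal. With respect to this inner product the symmetric part of $A$ is
\[
\tfrac12(A+A^T)=\tfrac12\begin{pmatrix}0 & I-\tilde{\mathfrak R}\\ I-\tilde{\mathfrak R} & 0\end{pmatrix},
\]
which, since $\tilde{\mathfrak R}$ is symmetric, has eigenvalues $\pm\tfrac12(1-\lambda_i)$ as $\lambda_i$ ranges over the eigenvalues of $\tilde{\mathfrak R}$. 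In particular, the sum of its positive eigenvalues is exactly $\tfrac12\sum_{i=1}^{n-1}|1-\lambda_i|$.

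The last ingredient is the classical Liouville--Wojtkowski comparison bound: for any linear flow $\dot Z=A(t)Z$ with fundamental solution $\Phi(T)$ and any integer $k$,
\[
\sum_{i=1}^{k}\log\sigma_i(\Phi(T))\leq \int_0^T\sum_{i=1}^{k}\nu_i(s)\,ds,
\]
where $\sigma_i(\Phi(T))$ are the singular values in decreasing order and $\nu_i(s)$ are the eigenvalues of $\tfrac12(A+A^T)(s)$, also in decreasing order; this is obtained by estimating $\frac{d}{dt}\log\|\wedge^k\Phi(t)\|$ on the induced flow on $k$-vectors. Dividing by $T$ and letting $T\to\infty$, Oseledets yields the sum of the top $k$ Lyapunov exponents on the left, while Birkhoff yields $\int_{\Sigma_c}(\nu_1+\dots+\nu_k)\,d\mu$ on the right (after passing through the ergodic decomposition). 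Maximizing over $k$ bounds $\sum_{\chi_i(\alpha)>0}\chi_i(\alpha)$ by $\tfrac12\int_{\Sigma_c}\sum_{i=1}^{n-1}|1-\lambda_i(\beta)|\,d\mu(\beta)$ for $\mu$-a.e.\ $\alpha$; integrating against $\mu$ and combining with Ruelle's inequality completes the proof.

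The hard part will be the identification in the second paragraph: I must make the Darboux frame evolve along $\varphi_t$ in exactly the way prescribed by the Agrachev--Gamkrelidze construction, so that the connection terms vanish and the curvature operator appearing in the canonical form is precisely the reduced curvature $\tilde{\mathfrak R}$ defined in Section \ref{Reduction}, rather than a gauge-conjugate variant that would alter the coefficient $\tfrac12$. Once this identification is in place, the remaining steps are standard linear algebra together with Ruelle's inequality, Oseledets' theorem, and Birkhoff's ergodic theorem.
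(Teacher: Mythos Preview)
Your approach is correct and lands on the same linear-algebraic kernel as the paper's proof, but takes a different route to get there. The paper invokes \cite[Lemma~3.1]{PaPe} as a black box,
\[
h_\mu\leq \liminf_{t\to 0}\frac{1}{t}\int_{\Sigma_c}\log\bigl(\textbf{ex}(\widetilde{d\varphi}_t)\bigr)\,d\mu,\qquad \textbf{ex}(\Phi)=\sup_S\det(\Phi|_S),
\]
and then computes the small-time expansion of the cocycle in the canonical frame to obtain $\textbf{ex}(\widetilde{d\varphi}_t)=1+\tfrac{t}{2}\sum_i|1-\lambda_i|+o(t)$. You instead work at large time: Ruelle's inequality, then a pointwise Wojtkowski-type bound on partial sums of singular values by the integral of the top eigenvalues of $\tfrac12(A+A^T)$, then Oseledets and Birkhoff. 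Both arguments reduce to the same computation---the positive eigenvalues of the symmetric part of the generator---so the difference is really packaging: the paper outsources the entropy estimate to \cite{PaPe} and only supplies the one-line Taylor expansion, while you are in effect re-deriving that lemma from scratch in this setting. Your version is more self-contained; the paper's is shorter.

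Your worry about the identification in the second paragraph is exactly the right one, and it is what the canonical frame of Proposition~\ref{Darboux} together with Proposition~\ref{curvaturechar} (and its reduced analogue) resolves: transporting the moving Darboux frame $\tilde E_\alpha(t),\tilde F_\alpha(t)$ along the orbit, the structural equations force the generator to be $\left(\begin{smallmatrix}0&\tilde{\mathcal R}_\alpha(t)\\-I&0\end{smallmatrix}\right)$ (in the paper's convention; yours is the transpose, with the blocks swapped, but since $\tilde{\mathcal R}$ is symmetric the symmetric part is the same). No extra connection term appears precisely because the canonical frame is chosen so that $\omega(\dot e^i,\dot e^j)=0$. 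Once you have that, the remaining steps are indeed standard.
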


The content of this paper is as follows. In Section \ref{Regular}, we discuss some materials on curves in Lagrangian Grassmannian which are needed in the definition of the curvature of $\vec H$. In section \ref{On}, we recall several basic results on linear second order ODEs which are needed in this paper. In Section \ref{Monotone}, we recall the definition of the curvature of $\vec H$. In Section \ref{Reduction}, we recall a reduction procedure studied in \cite{AgChZe} which is needed for the proof of the above theorems. Finally, sections \ref{Existence}-13 are devoted to the proofs.

\smallskip

\section{Notations}

$\mathcal V$ a symplectic vector space

$M$ a symplectic manifold

$\omega$ symplectic form on $\mathcal V$ or on $M$

$\mathcal L(\mathcal V)$ Lagrangian Grassmannian of $\mathcal V$

$J$ curve in $\mathcal L(\mathcal V)$

$J^o$ derivative curve of $J$

$R$ curvature operator of $J$

$\mathcal R$ matrix representation of $R$

$\left<\cdot,\cdot\right>^t$ the canonical bilinear form on $J(t)$

$e^1(t),...,e^n(t)$ a canonical frame of a regular curve $J$

$f^i(t)=\dot e^i(t)$

$H$ Hamiltonian

$\vec H$ Hamiltonian vector field of $H$

$J_\alpha$ Jacobi curve of $\vec H$ at $\alpha$

$R_\alpha(t)$ curvature operator of $J_\alpha$

$\mathfrak R$ curvature operator of $\vec H$

$\tilde J_\alpha$ reduced Jacobi curve of $\vec H$ at $\alpha$

$\tilde R_\alpha(t)$ curvature operator of $\tilde J_\alpha$

$\tilde{\mathfrak R}$ reduced curvature operator of $\vec H$

\smallskip

\section{Regular Curves in Lagrangian Grassmannian}\label{Regular}

Let $\mathcal V$ by a $2n$-dimensional vector space equipped with a symplectic form $\omega$. Recall that a $n$-dimensional subspace $\Delta$ of the symplectic vector space $\mathcal V$ is Lagrangian if the restriction of $\omega$ to $\Delta$ vanishes. The space of all Lagrangian subspaces in $\mathcal V$, called Lagrangian Grassmannian, is denoted by $\mathcal L=\mathcal L(\mathcal V)$. In this section, we recall the definition and properties of regular curves in $\mathcal L$. For a more complete discussion, see \cite{Ag2,LiZe}.

A smooth curve $t\mapsto J(t)$ in $\mathcal L$ carries a family of canonical bilinear forms $\left<\cdot,\cdot\right>^t$ defined by
\begin{equation}\label{bilinear1}
\left<v_1,v_2\right>^t:=\omega(\dot v_1(t),v_2)
\end{equation}
for all $v_1$ and $v_2$ in $J(t)$, where $\tau\mapsto v(\tau)$ is a curve satisfying $v_1=v(t)$ and $v(\tau)\in J(\tau)$ for each $\tau$.

\begin{defn}
A smooth curve $t\mapsto J(t)$ in the Lagrangian Grassmannian $\mathcal L$ is \textit{regular} if the bilinear form (\ref{bilinear1}) is non-degenerate for each $t$.
\end{defn}

Recall that a basis $e_1,...,e_n,f_1,...,f_n$ of the symplectic vector space $\mathcal V$ is a Darboux basis if $\omega(e_i,e_j)=\omega(f_i,f_j)=0$ and $\omega(f_i,e_j)=\delta_{ij}$. For a regular curve $J$, one can also define a canonical frame in $J$ which is unique up to transformations by orthogonal matrices. In fact, canonical frames can be found for more general curves, see \cite{LiZe} for more detail.

\begin{prop}\label{Darboux}
Assume that $J$ is a regular curve in the Lagrangian Grassmannian $\mathcal L$. Then there exists a smooth family of bases
\[
E(t)=(e^1(t),...,e^n(t))^T \quad (T \text{ denotes transpose})
\]
on $J(t)$ orthonormal with respect to the inner product $\left<\cdot,\cdot\right>^t$ such that, for each time $t$,
\[
\{e^1(t),...,e^n(t),\dot e^1(t),...,\dot e^n(t)\}
\]
forms a Darboux basis of the symplectic vector space $\mathcal V$. Moreover, if $\bar E(t)=(\bar e^1(t),...,\bar e^n(t))^T$ is another such family, then there exists an orthogonal matrix $U$ (independent of time $t$) such that $\bar E(t)=U\,E(t)$.
\end{prop}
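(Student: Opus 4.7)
My plan is to produce the canonical frame in two stages: first normalize an arbitrary smooth frame of $J(t)$ with respect to $\langle\cdot,\cdot\rangle^t$, and then use the residual orthogonal gauge freedom to enforce the remaining Darboux condition by solving a linear ODE.

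To start, pick any smooth frame $\tilde e^1(t),\dots,\tilde e^n(t)$ of $J(t)$; such a frame exists locally because $J$ is a smooth curve in $\mathcal L$. Let $B(t)$ be the Gram matrix of $\langle\cdot,\cdot\rangle^t$ in this frame. Since $\langle\cdot,\cdot\rangle^t$ is assumed to be an inner product, $B(t)$ is smoothly positive definite, so the positive square root $A(t):=B(t)^{-1/2}$ is smooth and the frame $\hat e^i(t):=\sum_j A_{ji}(t)\tilde e^j(t)$ is orthonormal with respect to $\langle\cdot,\cdot\rangle^t$. Equivalently $\omega(\dot{\hat e}^i,\hat e^j)=\delta_{ij}$, and $\omega(\hat e^i,\hat e^j)=0$ holds automatically by the Lagrangian condition. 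The only outstanding Darboux relation is therefore $\omega(\dot{\hat e}^i,\dot{\hat e}^j)=0$.

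To enforce it, set $S(t)_{ij}:=\omega(\dot{\hat e}^i(t),\dot{\hat e}^j(t))$, which is antisymmetric, and look for a smooth time-dependent orthogonal matrix $U(t)$ so that $e^i(t):=\sum_j U_{ij}(t)\hat e^j(t)$ is a canonical frame. An orthogonal change of frame preserves the normalization condition $\omega(\dot e^i, e^j)=\delta_{ij}$, and a direct computation (expanding $\dot e^i=\sum_j\dot U_{ij}\hat e^j+U_{ij}\dot{\hat e}^j$ and using $\dot U U^T+U\dot U^T=0$) yields
\[
\omega(\dot e^i,\dot e^j) = \bigl(2U\dot U^T + USU^T\bigr)_{ij}.
\]
Setting this to zero is equivalent to the linear ODE $\dot U=\tfrac12 US$. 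Since $\tfrac12 S$ takes values in the Lie algebra $\mathfrak o(n)$ of antisymmetric matrices, this ODE preserves orthogonality and admits a global solution from any orthogonal initial datum, producing the desired canonical frame.

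For uniqueness, write $\bar E(t)=U(t)E(t)$ with $U(t)$ orthogonal. Applying the same computation with $E$ already canonical (so $S\equiv 0$) forces $2U\dot U^T\equiv 0$, hence $\dot U\equiv 0$, and $U$ is a constant orthogonal matrix as asserted. The only genuinely substantive point in the proof is that the number of antisymmetric obstructions $\omega(\dot{\hat e}^i,\dot{\hat e}^j)$ matches the dimension of $\mathfrak o(n)$ exactly, which is precisely what lets the ODE for $U$ be solved inside the orthogonal group; past this observation, both existence and uniqueness reduce to routine linear algebra and ODE theory.
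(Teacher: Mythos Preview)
Your proof is correct and follows essentially the same route as the paper's: start from an orthonormal frame for $\langle\cdot,\cdot\rangle^t$, compute the antisymmetric obstruction matrix $S_{ij}=\omega(\dot{\hat e}^i,\dot{\hat e}^j)$, and kill it with an orthogonal gauge satisfying $\dot U=\tfrac12 US$; the uniqueness argument is likewise identical. The only cosmetic difference is that the paper begins directly with an orthonormal frame (leaving its existence implicit) and computes the obstruction for a general invertible $U$ before verifying orthogonality a posteriori, whereas you build the orthonormal frame explicitly via $B(t)^{-1/2}$ and impose $U\in O(n)$ from the outset.
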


\begin{rem}
For the rest of the paper, we will set $f^i(t)=\dot e^i(t)$.
\end{rem}

\begin{proof}
Let us fix a family of bases $\tilde E(t)=(\tilde e^1(t),...,\tilde e^n(t))^T$ on $J (t)$ orthonormal with respect to the canonical inner product. Since $J(t)$ is a Lagrangian subspace, we have
\begin{equation}\label{almostDarboux1}
\omega(\tilde e^i(t),\tilde e^j(t))=0.
\end{equation}

Since $\tilde E(t)$ is orthonormal with respect to the inner product $\left<\cdot,\cdot\right>^t$, we also have
\begin{equation}\label{almostDarboux2}
\omega(\dot{\tilde e}^i(t),\tilde e^j(t))=\delta_{ij}.
\end{equation}

Let $U(t)$ be any smooth family of invertible matrices, let $E(t)=U(t)\tilde E(t)$, and let $f^i(t)=\dot e^i(t)$. Let $\Omega(t)$ be the matrix with $ij$-th entry equal to $\omega(\dot e^i(t),\dot e^j(t))$. Then, by (\ref{almostDarboux1}) and (\ref{almostDarboux2}), we have
\[
\Omega(t)=-\dot U(t)U(t)^T+U(t)\dot U(t)^T+U(t)\tilde\Omega(t)U(t)^T.
\]

Therefore, if we let $\tilde \Omega(t)$ be the matrix with $ij$-th entry $\omega(\dot {\tilde e}^i(t),\dot{\tilde e}^j(t))$ and let $U(t)$ be the solution of
\begin{equation}\label{U(t)}
\dot U(t)=\frac{1}{2}U(t)\tilde\Omega(t)
\end{equation}
with $U(0)$ orthogonal, then $\Omega(t)\equiv 0$.

Note that $\tilde\Omega(t)$ is skew-symmetric. Therefore, $\frac{d}{dt}\left(U(t)U(t)^T\right)=0$ and $U(t)$ is orthogonal since $U(0)$ is. Hence by (\ref{almostDarboux2}) and the definition of $E(t)$
\[
e^1(t),...,e^n(t),f^1(t),...,f^n(t)
\]
is a Darboux basis.

Finally, if we assume that $\tilde e^1(t),...\tilde e^n(t),\dot{\tilde e}^1(t),...,\dot{\tilde e}^n(t)$ is a Darboux basis of $\mathcal V$ for each $t$, then $\tilde\Omega(t)\equiv 0$ and hence $\dot U(t)=0$ by (\ref{U(t)}). Therefore, the uniqueness claim follows.
\end{proof}

Proposition \ref{Darboux} leads to the following definition.

\begin{defn}
A family of Darboux bases $e^1(t),...,e^n(t),f^1(t),...,f^n(t)$ of a symplectic vector space $\mathcal V$ is a \textit{canonical frame} of the regular curve $J$ if, for each $i=1,...,n$,
\begin{enumerate}
\item $e^i(t)$ is contained in $J(t)$,
\item $f^i(t)=\dot e^i(t)$,
\end{enumerate}
\end{defn}

The proof of Proposition \ref{Darboux} gives the following result which will be needed later.

\begin{lem}\label{noncanonical}
Let $\tilde E(t):=(\tilde e^1(t),...,\tilde e^n(t))^T$ be a family of orthonormal basis (with respect to $\left<\cdot,\cdot\right>^t$) in a curve $J(\cdot)$ of the Lagrangian Grassmannian $\mathcal L$. Let $\Omega$ be the matrix with $ij$-th entry equal to
$\omega(\dot {\tilde e}^i(t),\dot{\tilde e}^j(t))$. Let $U$ be a solution of
\begin{equation}\label{UOmega}
\dot U(t)=\frac{1}{2}U(t)\Omega(t).
\end{equation}
Then $E(t)=U(t)\tilde E(t)$ and its derivative $\dot E(t)$ forms a canonical frame.
\end{lem}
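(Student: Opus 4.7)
The plan is to re-run the computation already performed inside the proof of Proposition~\ref{Darboux}, but now record it as a standalone statement about an arbitrary $\langle\cdot,\cdot\rangle^t$-orthonormal frame $\tilde E$. The hypothesis the lemma adds is precisely the ODE that was derived there, so the content is to verify that the resulting pair $E(t), \dot E(t)$ satisfies the three Darboux relations.

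The first move is to set $E(t) = U(t)\tilde E(t)$ and differentiate, obtaining $f^i = \dot e^i = \sum_j \dot U_{ij}\,\tilde e^j + \sum_j U_{ij}\,\dot{\tilde e}^j$. Then I would check the three Darboux conditions in order. The relation $\omega(e^i,e^j)=0$ is immediate since each $e^i(t)\in J(t)$ and $J(t)$ is Lagrangian. The relation $\omega(f^i,e^j)=\delta_{ij}$ reduces, using $\omega(\dot{\tilde e}^i,\tilde e^j)=\delta_{ij}$ (which is the orthonormality of $\tilde E$ with respect to $\langle\cdot,\cdot\rangle^t$) together with the Lagrangian property of $J(t)$, to the requirement $UU^T=I$. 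The remaining relation $\omega(f^i,f^j)=0$ expands to a matrix expression of the form $-\dot U U^T+U\dot U^T+U\Omega U^T$, which the ODE $\dot U=\frac{1}{2} U\Omega$ is exactly tailored to annihilate.

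Finally, I would verify that $U(t)$ stays orthogonal, assuming $U(0)$ is (this orthogonality of the initial value is implicit in the lemma). Differentiating $UU^T$ and using $\dot U=\frac{1}{2} U\Omega$ together with the skew-symmetry of $\Omega$ (inherited from the skew-symmetry of $\omega$) yields $\frac{d}{dt}(UU^T)=0$. I do not expect any step here to be difficult; the lemma is essentially a bookkeeping extraction from Proposition~\ref{Darboux}. The only place to be careful is in matching signs and transposes when expanding $\omega(f^i,f^j)$, since that computation is what pins down the coefficient $\frac{1}{2}$ in the ODE.
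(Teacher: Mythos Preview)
Your proposal is correct and is exactly the approach the paper takes: the paper does not even supply a separate proof, stating only that ``the proof of Proposition~\ref{Darboux} gives the following result,'' and your write-up simply unpacks that reference by rechecking the three Darboux relations and the orthogonality of $U(t)$. Your identification of the implicit hypothesis that $U(0)$ be orthogonal is also apt, since without it the relation $\omega(f^i,e^j)=(UU^T)_{ij}=\delta_{ij}$ would fail.
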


Proposition \ref{Darboux} also allows us to make the following definitions.

\begin{defn}\label{der}
Let $e^1(t),...,e^n(t),f^1(t),...,f^n(t)$ be a canonical frame of a regular curve $J$. The curve $J^o(\cdot)$ in $\mathcal L(\mathcal V)$ defined by
\[
J^o(t):=\text{span}\{f^1(t),...,f^n(t)\}
\]
is called the \emph{derivative curve} of $J$.
\end{defn}

The canonical frame satisfies a second order equation.

\begin{prop}\label{structural}
Let $e^1(t),...,e^n(t),f^1(t),...,f^n(t)$ be a canonical frame. Then there is a linear operator $R(t):J(t)\to J(t)$ symmetric with respect to the symmetric bilinear form $\left<\cdot,\cdot\right>^t$ such that
\[
\dot e^i(t)=f^i(t),\quad \dot f^i(t)=-R(t)\,e^i(t).
\]
\end{prop}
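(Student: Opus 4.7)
The first identity $\dot e^i(t)=f^i(t)$ is the definition of a canonical frame, so the content lies in the second. The plan is to expand $\dot f^i(t)$ in the Darboux basis $\{e^1(t),\dots,e^n(t),f^1(t),\dots,f^n(t)\}$, write
\[
\dot f^i(t)=\sum_j A_{ij}(t)\,e^j(t)+\sum_j B_{ij}(t)\,f^j(t),
\]
and use the Darboux relations to pin down the coefficients. The operator $R(t):J(t)\to J(t)$ will then be defined by $R(t)e^i(t):=-\sum_j A_{ij}(t)\,e^j(t)$ and extended linearly, so the task reduces to proving two things: (i) $B_{ij}(t)\equiv 0$, and (ii) the matrix $A(t)$ is symmetric.

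For (i), differentiate the Darboux relation $\omega(f^i(t),e^j(t))=\delta_{ij}$ in $t$. Using $\dot e^j=f^j$ and $\omega(f^i,f^j)=0$, this yields $\omega(\dot f^i(t),e^j(t))=0$. Substituting the expansion of $\dot f^i$ and using $\omega(e^k,e^j)=0$ together with $\omega(f^k,e^j)=\delta_{kj}$, the left side collapses to $B_{ij}(t)$, so $B_{ij}(t)=0$. For (ii), differentiate $\omega(f^i(t),f^j(t))=0$. Plugging in the expansion and again using the Darboux identities, one obtains $-A_{ij}(t)+A_{ji}(t)=0$, hence $A(t)$ is symmetric.

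It remains to verify that the operator $R(t)$ so defined is symmetric with respect to $\left<\cdot,\cdot\right>^t$. The key observation is that $\{e^1(t),\dots,e^n(t)\}$ is orthonormal for this inner product: by definition,
\[
\left<e^i(t),e^j(t)\right>^t=\omega(\dot e^i(t),e^j(t))=\omega(f^i(t),e^j(t))=\delta_{ij}.
\]
Therefore $\left<R(t)e^i(t),e^j(t)\right>^t=-A_{ij}(t)$ and $\left<e^i(t),R(t)e^j(t)\right>^t=-A_{ji}(t)$, and the symmetry of $A(t)$ gives the symmetry of $R(t)$.

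No step is really a serious obstacle here; the whole proposition is a bookkeeping exercise in the Darboux relations. The only mildly subtle point is keeping signs straight when differentiating $\omega(f^i,f^j)=0$ and $\omega(f^i,e^j)=\delta_{ij}$ and reading off the coefficients from the expansion, which is where the sign in $R(t)e^i=-\sum_j A_{ij}e^j$ enters. Once $B_{ij}\equiv 0$ and the symmetry of $A$ are in hand, everything else is immediate.
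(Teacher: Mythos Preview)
Your proof is correct and follows essentially the same approach as the paper: differentiate the Darboux relation $\omega(f^i,e^j)=\delta_{ij}$ to show $\ddot e^i(t)\in J(t)$ (the paper phrases this as ``$J(t)$ is Lagrangian, hence $\omega(\ddot e^i,e^j)=0$ forces $\ddot e^i\in J(t)$'', while you read off $B_{ij}=0$ from the basis expansion), and then differentiate $\omega(f^i,f^j)=0$ to obtain the symmetry of the coefficient matrix. The only additional remark in the paper is that $R(t)$ is independent of the choice of canonical frame, but this is not part of the proposition as stated.
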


\begin{proof}
By differentiating the condition $\omega(\dot e^i(t), e^j(t))=\delta_{ij}$ and using the condition $\omega(\dot e^i(t),\dot e^j(t))=0$, we obtain
$\omega(\ddot e^i(t),e^j(t))=0$. Since $J(t)$ is a Lagrangian subspace, $\ddot e^i(t)$ is contained in $J(t)$. Therefore, we can define $R(t)$ by
\[
R(t)\,e^i(t)=-\ddot e^i(t).
\]
By Theorem \ref{Darboux}, this definition of $R(t)$ is independent of the choice of canonical Darboux frames.

Finally, using the equation $\omega(f^i(t),f^j(t))=0$ and differentiating with respect to time, we see that
\[
\omega(\dot e^j(t), R(t)e^i(t))=\omega(\dot e^i(t), R(t)e^j(t)).
\]
It follows that $R(t)$ is symmetric with respect to the canonical inner product.
\end{proof}

\begin{defn}\label{curvatureoperator}
The equations
\[
\dot e^i(t)=f^i(t),\quad \dot f^i(t)+R(t)e^i(t)=0
\]
in Proposition \ref{structural} are called \textit{structural equations} of the curve $J$. The operators
$R(t)$ are the \textit{curvature operators} of $J$. The matrix representation of $R(t)$ is denoted by $\mathcal R(t)$ and it is defined by
\[
R(t)e^i(t)=\sum_{j=1}^n\mathcal R_{ij}(t)e^j(t)
\]
\end{defn}

\smallskip

\section{Monotone Hamiltonian vector fields}\label{Monotone}

Let $M$ be a symplectic manifold equipped with a symplectic structure $\omega$ and a Lagrangian distribution $\Lambda$. Let $H:M\to\Real$ be Hamiltonian and let $\vec H$ be the corresponding Hamiltonian vector field defined by
\[
\omega(\vec H,\cdot)=-dH(\cdot).
\]

Let us consider the canonical symmetric, bilinear form $\left<\cdot,\cdot\right>$ of the Hamiltonian vector field $\vec H$ defined on $\Lambda$ by
\begin{equation}\label{bilinear}
\left<v_1,v_2\right>_\alpha=\omega_\alpha([\vec H,V_1],V_2),
\end{equation}
where $V_1$ and $V_2$ are two sections of $\Lambda$ such that $V_1(\alpha)=v_1$ and $V_2(\alpha)=v_2$. Since $\Lambda$ is a Lagrangian distribution and the Hamiltonian vector field $\vec H$ preserves $\omega$, the above bilinear form is well-defined.

\begin{defn}
We say that the Hamiltonian vector field $\vec H$ is \textit{monotone} if the above bilinear form is a Riemannian metric on $\Lambda$.
\end{defn}

For example, if $M$ is the cotangent bundle $T^*M$ of a manifold $M$ equipped with the standard symplectic form $\omega=\sum_idp_i\wedge dx_i$, where $(x_1,...,x_n,p_1,...,p_n)$ is the local coordinates of $T^*M$. Then the canonical bilinear form $\left<\cdot,\cdot\right>$ corresponding to the Hamiltonian $H$ is given by
\[
\left<\partial_{p_i},\partial_{p_j}\right>=H_{p_ip_j}.
\]
Therefore, in this case, $\vec H$ is monotone if and only if $H$ is fibrewise strictly convex.

In this section, following the approach introduced by \cite{AgGa}, we consider the curvature of monotone Hamiltonian vector fields. For this, let $\varphi_t$ be the flow of the Hamiltonian vector field $\vec H$, let us fix a point $\alpha$ in the manifold $M$ and consider the following curve of Lagrangian subspaces in the Lagrangian Grassmannian $\mathcal L(T_\alpha M)$.

\begin{defn}
The curve $t\mapsto J_\alpha(t)$ in the Lagrangian Grassmannian $\mathcal L(T_\alpha M)$ defined by
\[
J_\alpha(t):=d\varphi_t^{-1}(\Lambda_{\varphi_t(\alpha)})
\]
is called the \textit{Jacobi curve} of $\vec H$ at $\alpha$.
\end{defn}

\begin{prop}\label{regular}
The canonical symmetric, bilinear form (\ref{bilinear}) of the Hamiltonian vector field $\vec H$ and the canonical bilinear form of the Jacobi curve defined by (\ref{bilinear1}) are related by
\[
\left<v_1,v_2\right>_{\varphi_t(\alpha)}=\left<d\varphi_t^{-1}v_1,d\varphi_t^{-1}v_2\right>^t
\]
for all $v_1$ and $v_2$ in $T_{\varphi_t(\alpha)}M$.

In particular, if the canonical bilinear form (\ref{bilinear}) is everywhere non-degenerate, then the Jacobi curve $J_\alpha(t)$ is regular for each $\alpha$.
\end{prop}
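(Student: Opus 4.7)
The plan is to verify the identity pointwise on Lagrangian vectors, and then observe that the "in particular" part is immediate. Fix $\alpha \in M$, a time $t$, and vectors $v_1, v_2 \in \Lambda_{\varphi_t(\alpha)}$. Set $w_i := d\varphi_t^{-1}(v_i)$; since $J_\alpha(t) = d\varphi_t^{-1}(\Lambda_{\varphi_t(\alpha)})$, both $w_i$ lie in $J_\alpha(t)$, so the right-hand side of the asserted identity is well defined.

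To evaluate $\left<w_1,w_2\right>^t$ via \eqref{bilinear1}, I need a curve $\tau \mapsto w(\tau)$ with $w(\tau)\in J_\alpha(\tau)$ and $w(t)=w_1$. Choose any local section $V_1$ of $\Lambda$ near $\varphi_t(\alpha)$ with $V_1(\varphi_t(\alpha))=v_1$ and set
\[
w(\tau) := d\varphi_\tau^{-1}\bigl(V_1(\varphi_\tau(\alpha))\bigr) = (\varphi_\tau^* V_1)(\alpha).
\]
This lies in $J_\alpha(\tau)$ by definition and satisfies $w(t)=w_1$. The key step is the Lie derivative identity $\frac{d}{d\tau}\varphi_\tau^* V_1 = \varphi_\tau^*[\vec H, V_1]$, which yields
\[
\dot w(t) = d\varphi_t^{-1}\bigl([\vec H,V_1](\varphi_t(\alpha))\bigr).
\]

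Combining this with $\omega$-invariance of the Hamiltonian flow, $\varphi_t^*\omega=\omega$, I compute
\[
\left<w_1,w_2\right>^t = \omega_\alpha(\dot w(t),w_2) = \omega_{\varphi_t(\alpha)}\bigl([\vec H,V_1](\varphi_t(\alpha)),\,v_2\bigr) = \left<v_1,v_2\right>_{\varphi_t(\alpha)},
\]
which is exactly the claimed relation. As a consistency check, setting $t=0$ recovers that $\left<\cdot,\cdot\right>^0$ on $J_\alpha(0)=\Lambda_\alpha$ agrees with the canonical form on $\Lambda$ at $\alpha$.

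For the "in particular" part, the identity shows that $d\varphi_t^{-1}:\Lambda_{\varphi_t(\alpha)}\to J_\alpha(t)$ is an isometry intertwining $\left<\cdot,\cdot\right>_{\varphi_t(\alpha)}$ and $\left<\cdot,\cdot\right>^t$; non-degeneracy of the former on all of $\Lambda$ therefore forces non-degeneracy of the latter at every $t$, so $J_\alpha$ is regular. The only mild obstacle is the bookkeeping in showing $\dot w(t)$ only depends on the value $v_1$ (not on the choice of extension $V_1$), but this is automatic once $\left<\cdot,\cdot\right>_{\varphi_t(\alpha)}$ is known to be a well-defined tensor on $\Lambda$, which is already granted by the paper.
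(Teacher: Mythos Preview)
Your proof is correct and follows essentially the same approach as the paper: pull back a section of $\Lambda$ by $\varphi_\tau$ to produce a curve in $J_\alpha(\tau)$, differentiate via the Lie-derivative identity, and invoke $\varphi_t^*\omega=\omega$. The only cosmetic difference is that the paper chooses a time-dependent extension $V^i_t$ arising from a canonical frame (so an extra $\dot V^i_t$ term appears and is killed because $\Lambda$ is Lagrangian), whereas your time-independent section $V_1$ avoids that term altogether; this is a harmless simplification, not a different method.
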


\begin{proof}
Let $e^1(t),...,e^n(t)$ be given by Proposition \ref{Darboux}. Let $V^i_t$ be a time dependent vector field on $M$ such that $d\varphi_t(e^i(t))=V^i_t(\varphi_t(\alpha))$. It follows from the definition of the bilinear form (\ref{bilinear1}) and the invariance of the form $\omega$ under the flow $\varphi_t$ that
\[
\begin{split}
\left<\varphi_t^*V^i_t,\varphi_t^*V^i_t\right>^t&=\omega_\alpha(\varphi_t^*([\vec H,V^i_t]+\dot V^i_t),\varphi_t^*V^i_t)\\
&=\omega_{\varphi_t(\alpha)}([\vec H,V^i_t],V^i_t)\\
&=\left<V^i_t,V^i_t\right>_{\varphi_t(\alpha)}.
\end{split}
\]
\end{proof}

It is, therefore, natural to call a Hamiltonian vector field $\vec H$ regular if the corresponding canonical bilinear form is non-degenerate. In particular, if $\vec H$ is monotone, then it is regular.

\begin{defn}
Assuming that the Hamiltonian vector field $\vec H$ is regular. Let us denote by $J^o_\alpha(t)$ the derivative curve of the Jacobi curve $J_\alpha(t)$ at $\alpha$ defined in Definition \ref{der}. We define a Lagrangian distribution $\Lambda^o$ by
\[
\Lambda^o=J^o_\alpha(0).
\]
We will refer to distributions $\Lambda$ and $\Lambda^o$ as the \textit{vertical} and the \textit{horizontal} bundles, respectively. We will also refer to a tangent vector in the distributions $\Lambda$ and $\Lambda^o$ as a vertical vector and a horizontal vector, respectively. If $w$ is a tangent vector in $TM=\Lambda\oplus\Lambda^o$, then its components $w^v$ in $\Lambda$ and $w^h$ in $\Lambda^o$ are called vertical and horizontal parts of $w$, respectively.
\end{defn}

The Jacobi curve $J_\alpha$ and the derivative curve $J^o_\alpha$ satisfy the following property.

\begin{prop}\label{curves}
For each $\alpha$ in $M$, we have
\[
d\varphi_s(J_\alpha(t))=J_{\varphi_{s}(\alpha)}(t-s),\quad d\varphi_s(J^o_\alpha(t))=J^o_{\varphi_{s}(\alpha)}(t-s).
\]
\end{prop}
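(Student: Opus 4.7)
The plan is to prove the two equalities separately. The first is a direct unwinding of definitions using the one-parameter group law, while the second requires invoking the characterization of $J^o$ via canonical frames and verifying that canonical frames transport naturally under the flow.

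For the first identity, I simply expand: by definition of the Jacobi curve,
\[
d\varphi_s(J_\alpha(t)) = d\varphi_s\bigl(d\varphi_t^{-1}(\Lambda_{\varphi_t(\alpha)})\bigr) = d\varphi_{s-t}\bigl(\Lambda_{\varphi_t(\alpha)}\bigr).
\]
On the other hand, using $\varphi_{t-s}(\varphi_s(\alpha))=\varphi_t(\alpha)$,
\[
J_{\varphi_s(\alpha)}(t-s) = d\varphi_{t-s}^{-1}\bigl(\Lambda_{\varphi_{t-s}(\varphi_s(\alpha))}\bigr) = d\varphi_{s-t}\bigl(\Lambda_{\varphi_t(\alpha)}\bigr),
\]
and the two coincide.

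For the second identity I would use a canonical frame. Fix $\alpha$ and let $e^1_\alpha(t),\ldots,e^n_\alpha(t),f^1_\alpha(t),\ldots,f^n_\alpha(t)$ be a canonical frame of $J_\alpha$, so $f^i_\alpha(t)=\dot e^i_\alpha(t)$ and $J^o_\alpha(t)=\mathrm{span}\{f^1_\alpha(t),\ldots,f^n_\alpha(t)\}$. Define
\[
\bar e^i(\tau):=d\varphi_s\bigl(e^i_\alpha(\tau+s)\bigr).
\]
Differentiating in $\tau$ and noting that $s$ is fixed gives $\dot{\bar e}^i(\tau)=d\varphi_s\bigl(f^i_\alpha(\tau+s)\bigr)$. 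By the first identity, $\bar e^i(\tau)\in d\varphi_s(J_\alpha(\tau+s))=J_{\varphi_s(\alpha)}(\tau)$. Since $d\varphi_s$ is a symplectomorphism, the three Darboux relations
\[
\omega(\bar e^i(\tau),\bar e^j(\tau))=0,\quad \omega(\dot{\bar e}^i(\tau),\bar e^j(\tau))=\delta_{ij},\quad \omega(\dot{\bar e}^i(\tau),\dot{\bar e}^j(\tau))=0
\]
follow from the corresponding identities for the canonical frame of $J_\alpha$ at time $\tau+s$. Hence $\bar e^i(\tau),\dot{\bar e}^i(\tau)$ is a canonical frame of $J_{\varphi_s(\alpha)}$ at time $\tau$, and by Definition \ref{der},
\[
J^o_{\varphi_s(\alpha)}(\tau)=\mathrm{span}\{\dot{\bar e}^i(\tau)\}=d\varphi_s\bigl(\mathrm{span}\{f^i_\alpha(\tau+s)\}\bigr)=d\varphi_s\bigl(J^o_\alpha(\tau+s)\bigr).
\]
Setting $\tau=t-s$ gives the claim.

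The only conceptual point, as opposed to symbol-pushing, is noticing that although the derivative curve $J^o$ is defined via a specific canonical frame, its pushforward under $d\varphi_s$ does not depend on the choice made: Proposition \ref{Darboux} ensures any two canonical frames differ by a constant orthogonal matrix, so $\mathrm{span}\{d\varphi_s f^i_\alpha(\tau+s)\}$ is well defined. Apart from this, the proof is a routine naturality check and I do not foresee a serious obstacle.
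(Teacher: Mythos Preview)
Your proof is correct and follows essentially the same approach as the paper's: the first identity is unwound from the definition and the group law, and the second is obtained by observing that $d\varphi_s(e^i_\alpha(\tau+s))$ is a canonical frame of $J_{\varphi_s(\alpha)}$. The paper simply asserts this last fact without spelling out the Darboux relations, whereas you verify them explicitly via the symplectomorphism property; otherwise the arguments coincide.
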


\begin{proof}
Let $J_\alpha(t)$ be the Jacobi curve at $\alpha$. It follows that
\[
d\varphi_s(J_\alpha(t))=d\varphi_s(d\varphi_t^{-1}\Lambda_{\varphi_t(\alpha)})=d\varphi_{t-s}^{-1}(\Lambda_{\varphi_{t-s}(\varphi_s(\alpha))})=J_{\varphi_s(\alpha)}(t-s).
\]
It also follows that $d\varphi_s(e^1(s+t)),...,d\varphi_s(e^n(s+t))$ is a canonical frame of $J_{\varphi_s(\alpha)}$. The second assertion follows from this.
\end{proof}

Similarly, we define the curvature operator of a Hamiltonian vector field by that of the Jacobi curves.

\begin{defn}
Assuming that the Hamiltonian vector field $\vec H$ is regular. Let $R_\alpha(t)$ be the curvature operators of the Jacobi curve $J_\alpha(t)$ at $\alpha$. The \textit{curvature operator} $\mathfrak R:\Lambda\to\Lambda$ of $\vec H$ is defined by
\[
\mathfrak R_\alpha=R_\alpha(0).
\]
\end{defn}

\begin{prop}\label{curvaturechar}
Assume that the Hamiltonian vector field $\vec H$ is regular. For each $\alpha$ in $M$ and each vector $v$ in $\Lambda_{\varphi_t(\alpha)}$, the following holds.
\[
R_{\alpha}(t)(d\varphi_t^{-1}(v))=d\varphi_t^{-1}(\mathfrak R_{\varphi_t(\alpha)}(v)).
\]
Moreover, for each vertical vector field $V$, the curvature operator $\mathfrak R$ satisfies
\[
\mathfrak R_\alpha(V)=-[\vec H,[\vec H,V]^h]^v(\alpha).
\]
\end{prop}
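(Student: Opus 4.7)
The plan is to handle the two assertions separately. For the intertwining identity, I would propagate canonical frames along the flow. Fix $\alpha$ and a canonical frame $e^1(\tau),\dots,e^n(\tau)$ of $J_\alpha$, and set $\tilde e^i(s) := d\varphi_t(e^i(s+t))$. By the observation made inside the proof of Proposition \ref{curves}, $\tilde e^i$ is a canonical frame of the Jacobi curve $J_{\varphi_t(\alpha)}$ near $s=0$. Applying the defining relation $\mathfrak R_{\varphi_t(\alpha)}\tilde e^i(0) = -\ddot{\tilde e}^i(0)$ and pulling the two time derivatives through $d\varphi_t$ gives
\[
\mathfrak R_{\varphi_t(\alpha)}\bigl(d\varphi_t(e^i(t))\bigr) = -d\varphi_t\bigl(\ddot e^i(t)\bigr) = d\varphi_t\bigl(R_\alpha(t)e^i(t)\bigr).
\]
Since $\{d\varphi_t(e^i(t))\}$ is a basis of $\Lambda_{\varphi_t(\alpha)}$, substituting $v = d\varphi_t(e^i(t))$ yields the first identity by linearity.

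For the Lie-bracket formula I would use the standard identity $[\vec H,W](\varphi_\tau(\alpha)) = d\varphi_\tau(\dot w(\tau))$, where $w(\tau) := d\varphi_\tau^{-1}(W(\varphi_\tau(\alpha)))$, valid for any vector field $W$. Applied to the given vertical field $V$, the pullback $v(\tau) := d\varphi_\tau^{-1}(V(\varphi_\tau(\alpha)))$ lies in $J_\alpha(\tau)$, so we may expand $v(\tau) = \sum_i a_i(\tau) e^i(\tau)$ and, by the structural equation $\dot e^i = f^i$,
\[
\dot v(\tau) = \sum_i \dot a_i(\tau) e^i(\tau) + \sum_i a_i(\tau) f^i(\tau).
\]
By Proposition \ref{curves}, $d\varphi_\tau$ carries $J_\alpha(\tau) \to \Lambda_{\varphi_\tau(\alpha)}$ and $J^o_\alpha(\tau) \to \Lambda^o_{\varphi_\tau(\alpha)}$, so the vertical/horizontal decomposition of $[\vec H,V](\varphi_\tau(\alpha))$ matches the $J_\alpha(\tau)/J^o_\alpha(\tau)$ splitting of $\dot v(\tau)$. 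In particular $[\vec H,V]^h(\varphi_\tau(\alpha)) = d\varphi_\tau\bigl(\sum_i a_i(\tau) f^i(\tau)\bigr)$.

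Applying the same Lie-derivative identity to $U := [\vec H,V]^h$, its pullback curve is $u(\tau) = \sum_i a_i(\tau) f^i(\tau) \in J^o_\alpha(\tau)$, and the second structural equation $\dot f^i = -R(\tau)e^i$ gives
\[
\dot u(\tau) = \sum_i \dot a_i(\tau) f^i(\tau) - \sum_i a_i(\tau) R(\tau) e^i(\tau).
\]
Extracting the $J_\alpha(\tau)$-component and evaluating at $\tau = 0$ yields $[\vec H,[\vec H,V]^h]^v(\alpha) = -\sum_i a_i(0) R(0) e^i(0) = -\mathfrak R_\alpha(V(\alpha))$, as required. The one subtlety I expect is confirming that the right-hand side depends only on $V(\alpha)$ and not on the extension of $V$ off the orbit; this is built into the derivation, since only the values of $V$ along $\{\varphi_\tau(\alpha)\}$ (encoded in $v(\tau)$) enter the computation, so changing the extension of $V$ leaves each $a_i(\tau)$ unchanged.
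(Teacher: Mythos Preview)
Your proof of the first assertion is identical in substance to the paper's: both push a canonical frame of $J_\alpha$ forward by $d\varphi_t$ (using the observation from Proposition \ref{curves}), read off the curvature from the second derivative, and then pass to general $v$ by linearity.

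For the second assertion your argument is correct and close in spirit to the paper's, but organized a little differently. The paper extends the canonical frame to time-dependent vertical vector fields $V_t^i$ on $M$ with $d\varphi_t(e^i(t))=V_t^i(\varphi_t(\alpha))$, computes $f^i(t)=\varphi_t^*([\vec H,V_t^i]+\dot V_t^i)$ and $R_\alpha(t)e^i(t)=-\varphi_t^*([\vec H,[\vec H,V_t^i]]+2[\vec H,\dot V_t^i]+\ddot V_t^i)$, observes that certain terms are horizontal, and finally invokes tensoriality of $V\mapsto[\vec H,V]^h(\alpha)$ and $W\mapsto[\vec H,W]^v(\alpha)$ to pass from the special fields $V_0^i$ to an arbitrary vertical $V$. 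Your version instead starts with an arbitrary vertical $V$, expands its pullback in the canonical frame with variable coefficients $a_i(\tau)$, and applies the structural equations directly; this bypasses the time-dependent extensions $V_t^i$ and makes the tensoriality remark unnecessary, since the equality $[\vec H,[\vec H,V]^h]^v(\alpha)=-\mathfrak R_\alpha(V(\alpha))$ already exhibits the left side as a function of $V(\alpha)$ alone. Both routes rest on the same identity $\varphi_\tau^*[\vec H,W]=\frac{d}{d\tau}\varphi_\tau^*W$ and the same structural equations, so the difference is one of bookkeeping rather than method.
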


\begin{proof}
Let $e^1(t),...,e^n(t),f^1(t),...,f^n(t)$ be given by Proposition \ref{Darboux}. As observed in the proof of Proposition \ref{curves},
\[
t\mapsto(d\varphi_s(e^1(t+s)),... ,d\varphi_s(e^n(t+s)))
\]
is a canonical Darboux frame at $\varphi_s(\alpha)$. Therefore, we have
\[
\begin{split}
d\varphi_s(R_\alpha(t+s)\,e^i(t+s))&=-\frac{d^2}{dt^2}d\varphi_s(e^i(t+s))\\
&=R_{\varphi_s(\alpha)}(t)\,d\varphi_s(e^i(t+s)).
\end{split}
\]

If we set $t=0$, then we obtain
\[
d\varphi_s(R_\alpha(s)\,e^i(s))=R_{\varphi_s(\alpha)}(0)\,d\varphi_s(e^i(s))
\]
and the first assertion follows.

Let $V_t^i$ be a time-dependent vertical vector field on $M$ such that $d\varphi_t(e^i(t))=V_t^i(\varphi_t)$ in a neighborhood of a point $\alpha$ in the cotangent bundle $T^*M$. It follows from the definition of the canonical Darboux frame that
\[
f^i(t)=\dot e^i(t)=\varphi_t^*([\vec H,V_t^i]+\dot V_t^i)(\alpha)
\]
and
\[
R_\alpha(t)\,e^i(t)=-\varphi_t^*([\vec H,[\vec H,V_t^i]]+2[\vec H,\dot V_t^i]+\ddot V_t^i)(\alpha).
\]

Note that
\[
d\varphi_t(f^i(t))=[\vec H,V_t^i](\varphi_t(\alpha))+\dot V_t^i(\varphi_t(\alpha))
\]
is contained in the horizontal space. Therefore, $[\vec H,V_t^i]+\dot V_t^i$, and hence $[\vec H,\dot V_t^i]+\ddot V_t^i$, are horizontal vector fields. It follows that
\[
\begin{split}
\mathfrak R_\alpha(V_0^i(\alpha))&=-([\vec H,[\vec H,V_0^i]]+2[\vec H,\dot V_0^i]+\ddot V_0^i)(\alpha)\\
&=-([\vec H,[\vec H,V_0^i]+\dot V_0^i])^v(\alpha)\\
&=-[\vec H,[\vec H,V^i_0]^h]^v(\alpha).
\end{split}
\]
It remains to note that the maps $V\mapsto [\vec H,V]^h(\alpha)$ and $W\mapsto [\vec H,W]^v(\alpha)$ are tensorial on $\Lambda$ and $\Lambda^o$, respectively.
\end{proof}

\smallskip

\section{Reduction of curves in Lagrangian Grassmannians}\label{Reduction}

Let $v$ be a vector in a symplectic vector space $\mathcal V$. Let $v^\angle$ be the symplectic complement of $v$. Recall that the symplectic reduction $\tilde{\mathcal V}$ of $\mathcal V$ by $v$ is defined by
\[
\tilde{\mathcal V}= v^\angle/\Real\, v.
\]

The symplectic form $\omega$ descends to a symplectic form $\tilde\omega$ on $\tilde{\mathcal V}$. It follows that any Lagrangian subspace in $\mathcal V$ also descends to a Lagrangian subspace in $\tilde{\mathcal V}$. In particular, if $J$ is a curve in the Lagrangian Grassmannian $\mathcal L(\mathcal V)$, then it descends to a curve $\tilde J$ in $\mathcal L(\tilde{\mathcal V})$. Note also that the canonical bilinear form (\ref{bilinear1}) of the curve $J$ clearly descends to that of the curve $\tilde J$. It follows that $\tilde J$ is regular if $J$ is. Therefore, there is a curvature operator for the curve $\tilde J$ which is denoted by $\tilde R$. For the rest of this section, we recall how the curvature of $J$ relates to that of $\tilde J$. The reduced Jacobi curve was considered in \cite{AgChZe}. Here we give slightly different proofs of the results.

By Proposition \ref{structural}, we can find a canonical frame
\begin{equation}\label{reduce1}
\tilde e^1(t),...,\tilde e^{n-1}(t)
\end{equation}
and a curvature operator $\tilde R(t):\tilde J(t)\to \tilde J(t)$ satisfying
\[
\dot{\tilde e}^i(t)=\tilde f^i(t),\quad \dot{\tilde f}^i(t)=-\tilde R(t)\tilde e^i(t).
\]

Assume that $v$ is transversal to the $J(t)$ for all $t$. Since $J(t)$ is a Lagrangian subspace, $v^\angle$ and $J(t)$ intersect transversely. Therefore, there is a family of bases along $J(t)$, denoted by
\[
\bar e^1(t),...,\bar e^n(t),
\]
which is orthonormal with respect to the canonical bilinear form (\ref{bilinear1}) such that the first $n-1$ of them are contained in $v^\angle$ and they  descend to the canonical frame (\ref{reduce1}) of $\tilde J(t)$. Let $\Omega(t)$ be the matrix with $ij$-th entry defined by $\Omega_{ij}(t):=\omega(\dot{\bar e}^i(t),\dot{\bar e}^j(t))$ and let $U$ be the solution of the equation (\ref{UOmega}) with initial condition $U(0)=I$. Note that $\Omega_{ij}(t)=0$ if $i\neq n$ and $j\neq n$. Let $\bar\Omega(t)$ be the $n-1$-vector with $i$-th entry defined by $\bar\Omega_i(t):=\Omega_{ni}(t)$. The curvature of the curve $J$ and its reduction $\tilde J$ are related as follows.

\begin{prop}\label{reducedcurve}
Assume that $v$ is transversal to $J(t)$ for all $t$. Then
\[
U(t)^T\mathcal R(t)U(t)=\left(\begin{array}{ccc}
\tilde{\mathcal R}(t)-\frac{3}{4}\bar\Omega\otimes\bar\Omega & \frac{\dot c(t)}{c(t)}\bar\Omega(t)+\frac{1}{2}\dot{\bar\Omega}(t)\\
\frac{\dot c(t)}{c(t)}\bar\Omega(t)^T+\frac{1}{2}\dot{\bar\Omega}(t)^T & \frac{1}{4}|\bar\Omega(t)|^2-\frac{\ddot c(t)}{c(t)}
\end{array}\right),
\]
where $c(t)=\omega(v,\bar e^n(t))$.

Here $\mathcal R(t)$ and $\tilde{\mathcal R}(t)$ denote the matrix representations of $R(t)$ and $\tilde R(t)$, respectively. $\bar\Omega\otimes\bar\Omega$ is the matrix defined by
\[
\bar\Omega\otimes\bar\Omega(w)=\left<\bar\Omega,w\right>\bar\Omega.
\]
\end{prop}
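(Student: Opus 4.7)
The plan is to compute the matrix of the curvature operator $R$ of $J$ in the orthonormal basis $\bar e^1(t),\ldots,\bar e^n(t)$. By Lemma~\ref{noncanonical} the rotated frame $E(t)=U(t)\bar E(t)$ together with $\dot E(t)$ is a canonical Darboux frame of $J$, so the structural equation $\ddot E=-\mathcal R E$ holds; in particular the matrix of $R$ in the basis $\bar E$ is $U^T\mathcal R U$. Substituting $E=U\bar E$, $\dot E=\dot U\bar E+U\dot{\bar E}$, and the identities $U^T\dot U=\tfrac12\Omega$, $U^T\ddot U=\tfrac14\Omega^2+\tfrac12\dot\Omega$ into the structural equation produces
\[
U^T\mathcal R U\,\bar E \;=\; -\ddot{\bar E}-\Omega\dot{\bar E}-\bigl(\tfrac14\Omega^2+\tfrac12\dot\Omega\bigr)\bar E.
\]
Pairing the $i$-th component with $\dot{\bar e}^j$ and using $\omega(\dot{\bar e}^k,\bar e^l)=\delta_{kl}$, $\omega(\dot{\bar e}^k,\dot{\bar e}^l)=\Omega_{kl}$ yields the master identity
\[
(U^T\mathcal R U)_{ij}=\tfrac{3}{4}(\Omega^2)_{ij}-\tfrac{1}{2}\dot\Omega_{ij}-\omega(\dot{\bar e}^j,\ddot{\bar e}^i).
\]

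The reduction hypothesis enters by constraining $\Omega$. Because $\bar e^1,\ldots,\bar e^{n-1}$ descend to the canonical frame of $\tilde J$, one has $\omega(\dot{\bar e}^i,\dot{\bar e}^j)=\tilde\omega(\dot{\tilde e}^i,\dot{\tilde e}^j)=0$ for $i,j\le n-1$, so $\Omega$ is supported on the last row and column, with $\Omega_{ni}=\bar\Omega_i$ and $\Omega_{in}=-\bar\Omega_i$. A short calculation then gives $(\Omega^2)_{ij}=-(\bar\Omega\otimes\bar\Omega)_{ij}$ for $i,j\le n-1$, $(\Omega^2)_{nn}=-|\bar\Omega|^2$, and the remaining entries of $\Omega^2$ vanish. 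The four cases for $(i,j)$ correspond to the four blocks in the claimed matrix, so it remains only to compute $\omega(\dot{\bar e}^j,\ddot{\bar e}^i)$ in each.

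For $i\le n-1$, since $\bar e^i(\tau)\in v^\angle$ for all $\tau$, the vector $\ddot{\bar e}^i$ lies in $v^\angle$ and its image in $\tilde{\mathcal V}$ is $\ddot{\tilde e}^i=-\sum_{k\le n-1}\tilde{\mathcal R}_{ik}\tilde e^k$. Lifting back gives
\[
\ddot{\bar e}^i=-\sum_{k\le n-1}\tilde{\mathcal R}_{ik}\bar e^k+\lambda^i v
\]
for some scalar $\lambda^i$, pinned down by pairing with $\bar e^n$: the orthonormality identity $\omega(\ddot{\bar e}^i,\bar e^n)=-\Omega_{in}=\bar\Omega_i$ (from differentiating $\omega(\dot{\bar e}^i,\bar e^j)=\delta_{ij}$) together with $\omega(v,\bar e^n)=c$ forces $\lambda^i=\bar\Omega_i/c$. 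Combining this with $\omega(\dot{\bar e}^j,v)=0$ for $j\le n-1$, $\omega(\dot{\bar e}^n,v)=-\dot c$, and the antisymmetric identity $\omega(\ddot{\bar e}^i,\dot{\bar e}^j)+\omega(\dot{\bar e}^i,\ddot{\bar e}^j)=\dot\Omega_{ij}$ (from differentiating $\omega(\dot{\bar e}^i,\dot{\bar e}^j)=\Omega_{ij}$) delivers the three blocks with at least one index $<n$. For the $(n,n)$ entry I would expand $\ddot{\bar e}^n$ in the full basis $\{\bar e^k,\dot{\bar e}^k\}$ of $\mathcal V$; all coefficients are determined by the orthonormality relations except the $\bar e^n$-coefficient, which is forced by differentiating $c(t)=\omega(v,\bar e^n(t))$ twice to give $\omega(v,\ddot{\bar e}^n)=\ddot c$, hence the value $\ddot c/c$.

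The main obstacle is bookkeeping: the skew-symmetry of $\Omega$ and the several antisymmetric identities coming from differentiating orthonormality relations combine to produce a tangle of signs that must be carefully tracked. Conceptually, however, the content is simple: the lift decomposition $\ddot{\bar e}^i=(\text{lift of }\ddot{\tilde e}^i)+\lambda^i v$ for $i\le n-1$ is what injects $\tilde{\mathcal R}$ into the answer, and all other terms are controlled by $\Omega$ and the single scalar $c$.
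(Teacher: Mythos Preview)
Your proposal is correct and follows essentially the same route as the paper: both substitute $E=U\bar E$ into the structural equation $\ddot E=-\mathcal R E$, use $\dot U=\tfrac12 U\Omega$ and $\ddot U=\tfrac14 U\Omega^2+\tfrac12 U\dot\Omega$, invoke the lift formula $\ddot{\bar e}^i=-\sum_{k<n}\tilde{\mathcal R}_{ik}\bar e^k+(\bar\Omega_i/c)\,v$ for $i<n$, and treat $\ddot{\bar e}^n$ separately via the vector $v$. The only organizational difference is that the paper first expands $v$ in the moving basis $\{\bar e^k,\dot{\bar e}^k\}$ and differentiates that expansion to obtain $\ddot{\bar e}^n$ explicitly, whereas you extract entries by pairing with $\dot{\bar e}^j$ (your ``master identity'') and pin down the $\bar e^n$-coefficient of $\ddot{\bar e}^n$ from $\omega(v,\ddot{\bar e}^n)=\ddot c$; these are two equivalent bookkeeping choices for the same computation.
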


\begin{proof}
Let $b_i(t)$ and $c_i(t)$ be functions defined by
\[
v=\sum_{i=1}^n(b_i(t)\bar e^i(t)+c_i(t)\dot{\bar e}^i(t)).
\]
By assumption, $\omega(v,\bar e^i(t))=0$ for all $i\neq n$. Therefore, by the condition
\[
\omega(\dot{\bar e}^i(t), \bar e^j(t))=\delta_{ij},
\]
we have $c_i\equiv 0$ for all $i\neq n$.

Since $\omega(v,\dot{\bar e}^i(t))=0$ for all $i\neq n$, we also have
\[
b_j=c_n\Omega_{nj}
\]
for all $j\neq n$.

Since $\omega(v,\bar e^n(t))=c_n(t)$, it also follows that $\dot c_n=b_n$. Therefore, we have
\begin{equation}\label{veq}
v=c_n(t)\left(\dot{\bar e}^n(t)+\sum_{j=1}^{n-1}\Omega_{nj}(t) \bar e^j(t)\right)-\dot c_n(t)\bar e^n(t).
\end{equation}

If we differentiate the above equation with respect to $t$, then we obtain
\begin{equation}\label{endd}
\begin{split}
& \ddot{\bar e}^n(t)=-\sum_{j=1}^{n-1}\left(\frac{\dot c_n(t)}{c_n(t)}\Omega_{nj}(t)+\dot\Omega_{nj}(t)\right)\bar e^j(t)\\
&\quad +\frac{\ddot c_n(t)}{c_n(t)}\bar e^n(t)-\sum_{j=1}^{n-1}\Omega_{nj}(t)\dot{\bar e}^j(t).
\end{split}
\end{equation}

On the other hand, since $\bar e^i(t)$ projects to $e^i(t)$, there is a function $a_i(t)$ such that
\[
\ddot{\bar e}^i(t)=-\sum_{j=1}^{n-1}\tilde{\mathcal R}_{ij}(t)\bar e^j(t)+a_i(t)v.
\]

By using  $\omega(v,\bar e^n(t))=c_n(t)$ again, we obtain $a_i=\frac{1}{c_n}\Omega_{ni}$. Therefore,
\begin{equation}\label{eidd}
\ddot{\bar e}^i(t)=-\sum_{j=1}^{n-1}\tilde{\mathcal R}_{ij}(t)\bar e^j(t)+\frac{1}{c_n(t)}\Omega_{ni}(t)v.
\end{equation}

Let $\bar E(t)=(\bar e^1(t),...,\bar e^n(t))^T$ and let $\bar F(t)=(\bar f^1(t),...,\bar f^n(t))^T$. Let $U$ be a solution of the equation $\dot U(t)=\frac{1}{2}U(t)\Omega(t)$ with $U(0)=I$ and let $E(t)=U(t)\bar E(t)$. By Lemma \ref{noncanonical}, $E(t)$ and $F(t):=\dot E(t)$ together form a Darboux basis. If we differentiate this twice, we obtain
\[
\begin{split}
&-\mathcal R(t)U(t)\bar E(t)\\
&=-\mathcal R(t)E(t)\\
&=\ddot E(t)\\
&=\ddot U(t)\bar E(t)+U(t)\Omega(t)\dot{\bar E}(t)+U(t)\ddot{\bar E}(t)\\
&=\frac{1}{2} \left(\frac{1}{2}U(t)\Omega(t)^2+U(t)\dot\Omega(t)\right)\bar E(t)+U(t)\Omega(t)\dot{\bar E}(t)+U(t)\ddot{\bar E}(t)
\end{split}
\]

Note that $\Omega$ is a skew-symmetric matrix satisfying $\Omega_{ij}=0$ if $i\neq n$ and $j\neq n$. If we combine the above equation with (\ref{veq}), (\ref{endd}), and (\ref{eidd}), then we obtain the following
\[
U(t)^T\mathcal R(t)U(t)=\left(\begin{array}{ccc}
\tilde{\mathcal R}(t)-\frac{3}{4}\bar\Omega\otimes\bar\Omega & \frac{\dot c_n(t)}{c_n(t)}\bar\Omega+\frac{1}{2}\dot{\bar\Omega}\\
\frac{\dot c_n(t)}{c_n(t)}\bar\Omega^T+\frac{1}{2}\dot{\bar\Omega}^T & \frac{1}{4}|\bar\Omega|^2-\frac{\ddot c_n(t)}{c_n(t)}
\end{array}\right),
\]
where $\bar\Omega$ is the vector in $\Real^n$ with $i$-th entry equal to $\Omega_{ni}$.
\end{proof}

Next, we define the reduced curvature operator of the Hamiltonian vector field $\vec H$. The reduction $\tilde J_\alpha$ of the curve $J_\alpha$ is defined by
\[
\tilde J_\alpha:=(J_\alpha\cap\vec H^\angle)/\Real\vec H.
\]

We also let $\tilde\Lambda_\alpha$ be the reduced distribution
\[
\tilde\Lambda_\alpha:=(\Lambda_\alpha\cap\vec H^\angle)/\Real\vec H.
\]

\begin{defn}
Assuming that the Hamiltonian vector field $\vec H$ is regular. Let $\tilde R_\alpha(t)$ be the curvature operators of the Jacobi curve $\tilde J_\alpha(t)$ at $\alpha$. The \textit{reduced curvature operator} $\tilde{\mathfrak R}:\tilde\Lambda\to\tilde\Lambda$ of $\vec H$ is defined by
\[
\tilde{\mathfrak R}_\alpha=\tilde R_\alpha(0).
\]
\end{defn}

In order to apply Proposition \ref{reducedcurve}, we need to assume that $\vec H$ is transversal to $\Lambda$ on a level set $\Sigma_c=H^{-1}(c)$. This condition is satisfied if $\vec H$ is monotone and $c$ is a regular value. Note that $\vec H^\angle_\alpha=\ker dH_\alpha=T_\alpha\Sigma_c$ if $c$ is regular value of $H$ and $\alpha$ is in $\Sigma_c:=H^{-1}(c)$.

\begin{prop}
Let $c$ be a regular value of $H$. Assume that the Hamiltonian vector field $\vec H(\alpha)$ is transversal to the space $\Lambda(\alpha)$ for all $\alpha$ in $\Sigma_c$. For $w$ in $\Lambda_\alpha\cap\vec H(\alpha)^\angle$, we have
\[
\left<\tilde{\mathfrak R}_\alpha(w),w\right>=\left<\mathfrak R_\alpha(w),w\right>+\frac{3}{4}\omega_\alpha([\vec H,[\vec H,\xi]],w)^2,
\]
where $\xi$ is a (local) section of $(\Lambda\cap\vec H^\angle)^\perp$ and $\perp$ denotes the orthogonal complement taken with respect to the canonical inner product of $\vec H$.
\end{prop}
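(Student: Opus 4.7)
The plan is to apply Proposition \ref{reducedcurve} to the Jacobi curve $J=J_\alpha$ in $\mathcal{L}(T_\alpha M)$ with reduction vector $v=\vec H(\alpha)$, and then read off the upper--left $(n-1)\times(n-1)$ block of the displayed matrix identity at $t=0$. The transversality hypothesis of Proposition \ref{reducedcurve} holds at $t=0$ since $\vec H(\alpha)\notin\Lambda_\alpha=J_\alpha(0)$ (because $c$ is a regular value, so $\vec H(\alpha)^\angle=T_\alpha\Sigma_c$, and $\vec H$ is transversal to $\Lambda$ by hypothesis), and by smoothness of $\varphi_t$ it persists for $t$ near $0$, which is all one needs for computing the derivatives at $t=0$. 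The reduced curve of Proposition \ref{reducedcurve} then coincides by construction with $\tilde J_\alpha=(J_\alpha\cap\vec H^\angle)/\Real\vec H$.

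Pick a frame $\bar e^{1}(t),\dots,\bar e^{n}(t)$ of $J_\alpha(t)$ as in Proposition \ref{reducedcurve}, with $\bar e^{1}(t),\dots,\bar e^{n-1}(t)\in\vec H(\alpha)^\angle$ descending to a canonical frame of $\tilde J_\alpha$. Using the orthogonal freedom afforded by Proposition \ref{Darboux}, arrange at $t=0$ that $\bar e^{n}(0)=\xi(\alpha)$ (taking $\xi$ to be the unit section of $(\Lambda\cap\vec H^\angle)^\perp$) and that $\bar e^{1}(0),\dots,\bar e^{n-1}(0)$ is an orthonormal basis of $\Lambda_\alpha\cap\vec H^\angle$. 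Because $U(0)=I$, Proposition \ref{reducedcurve} at $t=0$ reads, for every $w\in\Lambda_\alpha\cap\vec H^\angle$,
\[
\left<\mathfrak R_\alpha w,w\right>=\left<\tilde{\mathfrak R}_\alpha w,w\right>-\tfrac{3}{4}\bigl(\bar\Omega(0)\cdot w\bigr)^{2},
\qquad \bar\Omega_i(0)=\omega\bigl(\dot{\bar e}^{n}(0),\dot{\bar e}^{i}(0)\bigr).
\]

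The main step is to identify this dot product with the bracket expression in the statement. Extend the frame vectors to local vertical vector fields by putting $V^{n}=\xi$ and, for $i<n$, taking $V^{i}$ to be any local section of $\Lambda\cap\vec H^\angle$ with $V^{i}(\alpha)=\bar e^{i}(0)$; as in the proof of Proposition \ref{curvaturechar}, $\dot{\bar e}^{i}(0)=[\vec H,V^{i}](\alpha)$. Using $\mathcal L_{\vec H}\omega=0$ one gets
\[
\omega\bigl([\vec H,\xi],[\vec H,V^{i}]\bigr)=\vec H\,\omega\bigl([\vec H,\xi],V^{i}\bigr)-\omega\bigl([\vec H,[\vec H,\xi]],V^{i}\bigr)=\vec H\left<\xi,V^{i}\right>-\omega\bigl([\vec H,[\vec H,\xi]],V^{i}\bigr).
\]
Since $\xi\in(\Lambda\cap\vec H^\angle)^\perp$ and $V^{i}\in\Lambda\cap\vec H^\angle$, the pairing $\left<\xi,V^{i}\right>$ vanishes \emph{identically} on a neighbourhood of $\alpha$, so the first term drops. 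Hence $\bar\Omega_i(0)=-\omega_\alpha\bigl([\vec H,[\vec H,\xi]],\bar e^{i}(0)\bigr)$, and summing against the coordinates of $w$ yields $\bigl(\bar\Omega(0)\cdot w\bigr)^{2}=\omega_\alpha\bigl([\vec H,[\vec H,\xi]],w\bigr)^{2}$, completing the proof. The only delicate point, and where I expect the main obstacle, is that the extensions $V^{i}$ must be sections of $\Lambda\cap\vec H^\angle$ throughout a neighbourhood (not just at $\alpha$), so that $\vec H\left<\xi,V^{i}\right>$ vanishes identically; the tensoriality argument at the end of the proof of Proposition \ref{curvaturechar} ensures that the resulting value at $\alpha$ is independent of the choice of extension.
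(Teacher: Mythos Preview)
Your strategy coincides with the paper's: set $t=0$ in Proposition~\ref{reducedcurve} and identify $\bar\Omega_i(0)$ with $-\omega_\alpha([\vec H,[\vec H,\xi]],\bar e^i(0))$. One step deserves a correction, though. The assertion $\dot{\bar e}^{i}(0)=[\vec H,V^{i}](\alpha)$ does \emph{not} follow from the proof of Proposition~\ref{curvaturechar} for an arbitrary extension $V^i$: that proof gives $\dot{\bar e}^{i}(0)=[\vec H,W^{i}_0](\alpha)+\dot W^{i}_0(\alpha)$ for a time-dependent family $W^{i}_t$ with $d\varphi_t(\bar e^{i}(t))=W^{i}_t(\varphi_t(\alpha))$, and the correction $\dot W^{i}_0(\alpha)$ need not vanish unless you build $V^i$ along the orbit by $V^i(\varphi_t(\alpha)):=d\varphi_t(\bar e^i(t))$. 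Your computation is nevertheless correct, because $\dot W^{i}_0\in\Lambda\cap\vec H^\angle$ and $\omega([\vec H,\xi],\cdot)=\langle\xi,\cdot\rangle$ annihilates that subspace; this, rather than tensoriality of the final formula, is what kills the extra term.

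The paper avoids this issue altogether by a shorter route: instead of computing $\dot{\bar e}^{i}(0)$, it differentiates the orthonormality relation $\omega(\dot{\bar e}^{n},\bar e^{i})=0$ to get $\Omega_{ni}(0)=-\omega(\ddot{\bar e}^{n}(0),\bar e^{i}(0))$, observes (via Proposition~\ref{regular}) that $\bar e^{n}(t)=\varphi_t^*\xi(\alpha)$ is precisely the unit vector in $(J_\alpha(t)\cap\vec H^\angle)^\perp$, and then reads off $\ddot{\bar e}^{n}(0)=[\vec H,[\vec H,\xi]](\alpha)$. No extension of the $\bar e^{i}$ for $i<n$ is required.
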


\begin{proof}
By setting $t=0$ in the statement of Proposition \ref{reducedcurve}, we have
\begin{equation}\label{red}
\left<\tilde{\mathfrak R}_\alpha(w),w\right>=\left<\mathfrak R_\alpha(w),w\right>+\frac{3}{4}\left(\sum_{i=1}^{n-1}\Omega_{ni}(0)w_i\right)^2,
\end{equation}
where $w=\sum_{i=1}^nw_ie^i(0)$.

Using the notations of the proof of Proposition \ref{reducedcurve}, we have $\tilde e^n(t)=\varphi_t^*\xi(\alpha)$. It follows that
\[
\begin{split}
\Omega_{ni}(0)&=-\omega(\ddot{\tilde e}^n(0),\tilde e^i(0))\\
&=-\omega([\vec H,[\vec H,\xi]],\tilde e^i(0)).
\end{split}
\]

The result follows by combining this with (\ref{red}).
\end{proof}

\smallskip

\section{Existence of invariant distributions}\label{Existence}

In this section, we assume that a given Hamiltonian vector field $\vec H$ is monotone and it does not contain any conjugate point. Let $\varphi_t$ be the flow of a regular Hamiltonian vector field $\vec H$.

\begin{defn}
The point $\varphi_t(\alpha)$ is a conjugate point of $\alpha$ along  the flow $\varphi_t$ if $d\varphi_t(\Lambda_\alpha)$ and $\Lambda_{\varphi_t(\alpha)}$ do not intersect transversely for some $t>0$. Equivalently, $\varphi_t(\alpha)$ is a conjugate point if $J_\alpha(0)$ and $J_\alpha(t)$ do not intersect transversely for some $t> 0$.
\end{defn}

Under the above assumptions, we show that there are always two Lagrangian distributions $\Delta^\pm$ which are invariant under $\varphi_t$. Theorem \ref{intro1} also follows from the following.

\begin{thm}\label{Green}
Let $c$ be a regular value of the Hamiltonian $H$. Assume that the Hamiltonian vector field $\vec H$ is monotone and its flow does not contain any conjugate point on $\Sigma_c$. Then the following holds on $\Sigma_c$:
\begin{enumerate}
\item $\Delta^\pm_\alpha:=\lim_{t\to \pm\infty}J_\alpha(t)$ exists,
\item $\Delta^\pm$ are Lagrangian distributions which are invariant under $d\varphi_t$,
\item $\Delta^+\cap\Lambda=\Delta^-\cap\Lambda=\{0\}$,
\item $\vec H \subseteq \Delta^+\cap\Delta^-$,
\item $\Delta^\pm\subseteq\vec H^\angle=T\Sigma_c$.
\end{enumerate}
\end{thm}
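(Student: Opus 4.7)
The plan is to follow the strategy of Eberlein \cite{Eb}, as extended to the Hamiltonian setting in \cite{CoIt}: convergence of the Jacobi curves will come from monotonicity of an associated Riccati quantity, and the remaining structural properties will then follow either directly or through the symplectic reduction of Section \ref{Reduction}.

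The core step is assertion (1). Fix $\alpha \in \Sigma_c$. For each $T \neq 0$ the no-conjugate-point hypothesis gives $J_\alpha(T) \cap \Lambda_\alpha = \{0\}$, so one may represent $J_\alpha(T)$ as the graph of a symmetric operator $S_\alpha(T) \colon \Lambda_\alpha^o \to \Lambda_\alpha$, the symmetry coming from the Lagrangian property. The structural equations of Proposition \ref{structural}, translated into this Darboux chart, yield a matrix Riccati equation for $S_\alpha(T)$ whose quadratic part carries the sign of the canonical bilinear form (\ref{bilinear1}). By Proposition \ref{regular} the monotone Hamiltonian hypothesis makes this bilinear form positive definite, and a standard manipulation then produces the inequality $\frac{d}{dT}S_\alpha(T) > 0$ in the Loewner order. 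A uniform upper bound for $S_\alpha(T)$ on compact pieces of $(0,\infty)$ is provided by Riccati comparison with a constant-curvature model, using that $\Sigma_c$ is compact so the curvature is bounded, together with the no-conjugate-point hypothesis which prevents blow-up. A bounded monotone family of symmetric operators converges, and this gives $\Delta^+_\alpha = \lim_{T\to\infty} J_\alpha(T)$; the case $T \to -\infty$ is symmetric.

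Assertion (2) and the $\Lambda$-transversality in (3) then follow almost immediately. The Lagrangian Grassmannian is closed, so the limits $\Delta^\pm_\alpha$ are Lagrangian; the limit operator $S^\pm_\alpha$ is finite, hence $\Delta^\pm_\alpha \cap \Lambda_\alpha = \{0\}$; and applying $d\varphi_s$ to the identity $d\varphi_s(J_\alpha(T)) = J_{\varphi_s(\alpha)}(T-s)$ from Proposition \ref{curves} and letting $T \to \pm\infty$ gives the invariance $d\varphi_s(\Delta^\pm_\alpha) = \Delta^\pm_{\varphi_s(\alpha)}$.

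The main obstacle is items (4) and (5), since $\vec H$ is transversal to $\Lambda$ and therefore $\vec H \notin J_\alpha(T)$ for any finite $T$, yet we must have $\vec H \in \Delta^\pm_\alpha$. My plan is to run the same monotone Riccati argument on the reduced Jacobi curves $\tilde J_\alpha(t) = (J_\alpha(t) \cap \vec H^\angle)/\Real\vec H$ in the reduced symplectic space $\vec H^\angle_\alpha/\Real\vec H(\alpha)$; the absence of conjugate points and the monotonicity of the reduced bilinear form descend from $\vec H$, so this yields reduced limits $\tilde \Delta^\pm_\alpha$. The preimage $\pi^{-1}(\tilde\Delta^\pm_\alpha) \subset \vec H^\angle$ is automatically an $n$-dimensional Lagrangian subspace of $T_\alpha M$ containing $\vec H(\alpha)$ and contained in $\vec H^\angle = T_\alpha\Sigma_c$, and this will be the desired $\Delta^\pm_\alpha$. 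The remaining identification $\Delta^\pm_\alpha = \pi^{-1}(\tilde\Delta^\pm_\alpha)$ follows from the observation that both are $n$-dimensional Lagrangians sharing the $(n-1)$-dimensional subspace $\lim(J_\alpha(t) \cap \vec H^\angle)$, combined with a uniform bound on the $\vec H^\angle$-transverse component of $J_\alpha(t)$ coming from the flow-invariance of $dH$, which forces that transverse direction to collapse into $\Real\vec H$ in the limit.
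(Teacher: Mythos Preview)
There is a genuine gap in your boundedness step. You write that a uniform upper bound for $S_\alpha(T)$ comes from Riccati comparison with a constant-curvature model, ``using that $\Sigma_c$ is compact so the curvature is bounded,'' but Theorem \ref{Green} carries no compactness hypothesis; the paper remarks immediately after the statement that none is needed. Without a curvature bound, comparison with a hyperbolic model is unavailable, and a merely monotone family of symmetric operators can diverge. The paper's bound comes from a different mechanism (Lemma \ref{Ubdd}): for $s_1<s_2<0<s_3<s_4$ one has $U(s_2,t)\geq U(s_1,t)\geq U(s_4,t)\geq U(s_3,t)$, so the family $s\mapsto U(s,t)$ for $s>0$ is monotone in $s$ \emph{and} trapped on the other side by any $U(s',t)$ with $s'<0$. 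This sandwich is obtained by applying the Riccati comparison Theorem \ref{comparison} with the \emph{same} $\mathcal R$ on both sides and uses only Assumption \ref{noconjugate}, i.e.\ only the no-conjugate-point hypothesis, no curvature input at all.

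A second point concerns the architecture. The paper does not run parallel limit arguments on $J_\alpha$ and on $\tilde J_\alpha$ and then reconcile them. It works from the outset with the reduced curve $\tilde J_\alpha$, obtains $\tilde\Delta^\pm$ there, and \emph{defines} $\Delta^\pm$ as the preimage in $\vec H^\angle$ via (\ref{D+}); items (4) and (5) are then tautological, and (3) follows from $\tilde\Delta^\pm\cap\tilde\Lambda=\{0\}$ together with $\vec H\notin\Lambda$. Your final paragraph, matching $\lim J_\alpha(t)$ with $\pi^{-1}(\tilde\Delta^\pm)$, is therefore avoidable, and as written it is incomplete: two $n$-dimensional Lagrangians sharing an $(n-1)$-dimensional isotropic subspace need not coincide, and your ``uniform bound on the $\vec H^\angle$-transverse component'' again leans on compactness. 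Finally, your claim that the absence of conjugate points ``descends'' to $\tilde J_\alpha$ is correct but not automatic; this is exactly the content of Proposition \ref{noconjugatepoint}, whose proof uses the monotonicity hypothesis in an essential way through the positivity of the integral in (\ref{B}).
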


Note that the above theorem does not require any compactness assumption on $\Sigma_c=H^{-1}(c)$. For the proof of Theorem \ref{Green}, it is convenient to introduce the reduction of $d\varphi_t$ which is also needed in the later sections. Let us consider the quotient bundle $\mathfrak V:=\vec H^\angle/\Real\vec H$. Both the symplectic structure $\omega$ and the flow $d\varphi_t$ descend to $\mathfrak V$. The descended objects are denoted by $\tilde\omega$ and $\widetilde{d \varphi}_t$, respectively. The bundle $\tilde\Lambda$ defined by $\tilde\Lambda:=(\Lambda\cap \vec H^\angle)/\Real\vec H$ is a Lagrangian sub-bundle of $\mathfrak V$.

Let $\tilde J_\alpha(t)$ be the reduced Jacobi curve defined by
\[
\tilde J_\alpha(t):=\widetilde {d\varphi}_t^{-1}(\tilde\Lambda_{\varphi_t(\alpha)})=(J_\alpha(t)\cap \vec H^\angle_\alpha)/\Real \vec H.
\]
The canonical frames of $\tilde J_\alpha$ are denoted by
\[
\tilde E_\alpha(t)=(\tilde e^1_\alpha(t),...,\tilde e^{n-1}_\alpha(t))^T,\quad \tilde F_\alpha(t)=(\tilde f^1_\alpha(t),...,\tilde f^{n-1}_\alpha(t))^T.
\]

Next, we adopt an argument in \cite[Proposition 1.16]{CoIt} and prove the following result which holds true for a general regular curve in the Lagrangian Grassmannian.

\begin{prop}\label{noconjugatepoint}
Let $J$ be a curve in the Lagrangian Grassmannian $\mathcal L(\mathcal V)$. Let $\Delta$ be a Lagrangian subspace of $\mathcal V$ such that $\Delta$ and $J(t)$ intersect transversely for all $t$. Let $v$ be a vector in $\Delta$. Assume that the curves $J(t)$ and $J(0)$ intersect transversely for all $t$. Then the same holds for the reduced curve $\tilde J$.
\end{prop}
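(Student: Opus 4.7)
\emph{Proof proposal.} The plan is to argue by contradiction via a lifting back to $\mathcal V$. Fix $t_0\neq 0$ and suppose $\tilde J(0)\cap \tilde J(t_0)$ contains a nonzero class; lifting through the quotient $v^\angle\to v^\angle/\mathbb R v$ produces $w\in J(0)\cap v^\angle$ not a multiple of $v$, together with $\lambda\in\mathbb R$ such that $w-\lambda v\in J(t_0)$. The standing transversality $\Delta\cap J(0)=0$ combined with $v\in\Delta$ already forces $w\neq 0$; if furthermore $\lambda=0$ then $w$ would lie in $J(0)\cap J(t_0)=\{0\}$, a contradiction, so $\lambda\neq 0$. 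The identity $\lambda v = w-(w-\lambda v)$ is then the unique decomposition of $\lambda v$ in the direct sum $\mathcal V=J(0)\oplus J(t_0)$; writing the analogous decomposition $v=v_0+v_1$ and reading off $v_0=w/\lambda$, the condition $w\in v^\angle$ yields the key identity
\[
\omega(v_0,v)=0.
\]

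The rest of the argument derives a contradiction by showing that $\omega(v_0,v)\neq 0$, using the monotonicity hypothesis in force throughout Section~\ref{Existence}. Since $\Delta$ is a Lagrangian complement of every $J(t)$, I would parametrize $J(t)=\{u+A(t)u:u\in J(0)\}$ for a symmetric linear map $A(t):J(0)\to\Delta$, with $A(0)=0$. A direct calculation shows that in this parametrization the canonical bilinear form $\langle\cdot,\cdot\rangle^t$ is represented by $\omega(A'(t)\cdot,\cdot)$, so monotonicity gives $\omega(A'(s)u,u)>0$ for $u\neq 0$; integrating, $\omega(A(t_0)u,u)$ is a definite (hence nondegenerate) quadratic form in $u$, so $A(t_0)$ is invertible. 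Equating the $\Delta$-components in $v=v_0+u_0+A(t_0)u_0$ gives $u_0=A(t_0)^{-1}v\neq 0$ and $v_0=-u_0$, whence
\[
\omega(v_0,v)=-\omega(u_0,A(t_0)u_0)=\omega(A(t_0)u_0,u_0)\neq 0,
\]
contradicting the previous identity.

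The main obstacle I expect is this final step: translating the abstract condition $\omega(v_0,v)=0$ into the definite-sign quadratic form $\omega(A(t_0)u_0,u_0)$ via the graph parametrization so that monotonicity can be invoked. The preceding bookkeeping with the three Lagrangians $\Delta$, $J(0)$, $J(t_0)$ and the reduction vector $v$ is conceptually routine but easy to mis-arrange; the essential ingredient is the positivity supplied by monotonicity---not merely the regularity of the curve---which is what allows the argument to close.
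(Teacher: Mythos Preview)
Your argument is correct and follows the same logical skeleton as the paper's proof: both lift a hypothetical nonzero intersection $\tilde J(0)\cap\tilde J(t_0)$ to a vector $w\in J(0)\cap v^\angle$ with $w-\lambda v\in J(t_0)$, reduce to the identity $\omega(w,v)=0$, and then contradict this using a positivity coming from monotonicity.

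The difference is only in packaging. The paper works in the canonical frame $E(t),F(t)$ (whose very existence with $\omega(\dot e^i,e^j)=+\delta_{ij}$ already encodes monotonicity), represents $\Delta$ by a matrix solution $D(t)$ of the structural equation, and builds $B(t)=D(t)\int_0^t D(s)^{-1}(D(s)^T)^{-1}\,ds$; the contradiction is the computation
\[
a^Tb \;=\; c\,a^T\!\Bigl(\int_0^{t_0} D(s)^{-1}(D(s)^T)^{-1}\,ds\Bigr)a \;\neq\; 0,
\]
where the integrand is manifestly positive definite. Your graph parametrization $J(t)=\{u+A(t)u\}$ over $J(0)\oplus\Delta$ produces exactly the same object in coordinate-free form: your definite quadratic form $u\mapsto\omega(A(t_0)u,u)=\int_0^{t_0}\omega(A'(s)u,u)\,ds$ is the paper's integral, and your conclusion $\omega(v_0,v)=\omega(A(t_0)u_0,u_0)\neq 0$ is precisely the paper's $a^Tb\neq 0$. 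Your version has the advantage of making the role of monotonicity explicit rather than hiding it in the normalization of the canonical frame; the paper's version has the advantage of tying directly into the matrix Riccati apparatus used in the rest of Section~\ref{Existence}. One small remark: your sentence ``$\Delta\cap J(0)=0$ combined with $v\in\Delta$ already forces $w\neq 0$'' is unnecessary---$w\neq 0$ is immediate from the fact that its class in $v^\angle/\Real v$ is nonzero.
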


\begin{proof}
Assume the contrary. Then there is a nonzero vector $w$ in $J(0)\cap v^\angle\cap (J(t_0)\oplus \Real v)$. Let $E(t)$ be a canonical frame and let $F(t)=\dot E(t)$. Let $D(t)$ be the matrix such that the components of
\[
-\dot D(t)^TE(t)+D(t)^TF(t)=-\dot D(0)^TE(0)+F(0)
\]
span $\Delta$ and $D(t)$ satisfies (\ref{2nd}) with $D(0)=I$.

Let $B$ be the matrix defined by
\begin{equation}\label{B}
B(t)=D(t)\int_0^tD(s)^{-1}(D(s)^T)^{-1}ds.
\end{equation}
$B$ is a solution of (\ref{2nd}) with initial conditions $B(0)=0$ and $\dot B(0)=I$.

Since $w$ is contained in $J(0)$, we can let $w=-a^TE(0)$ and get
\[
w=a^T(-\dot B(t)^TE(t)+B(t)^TF(t)).
\]

Since $w$ is contained in $J(t_0)\oplus\Real v$ and $v$ is transversal to the space $J(t_0)$, the $J^o(t_0)$-component of $v$ is given by the non-zero vector $a^TB(t_0)^TF(t_0)$. On the other hand, since $v$ is contained in $\Delta$, there is a vector $b$ such that
\[
v=b^T(-\dot D(t)^TE(t)+D(t)^TF(t)).
\]
It follows that $D(t_0)b=cB(t_0)a$ for some nonzero constant $c$. Note that $D(t_0)$ is invertible since $\Delta$ and $J(t_0)$ intersect transversely. Therefore, if we combine the above considerations with (\ref{B}), then
\[
a^Tb=ca^TD(t_0)^{-1}B(t_0)a=ca^T\left(\int_0^{t_0}D(s)^{-1}(D(s)^T)^{-1}ds\right)a\neq 0.
\]
However, since $w$ is contained in $v^\angle$, we also have
\[
\begin{split}
0&=\omega(v,w)\\
&=-\omega(b^TF(0),a^TE(0))\\
&=-b^Ta.
\end{split}
\]

This gives a contradiction.
\end{proof}

\begin{proof}[Proof of Theorem \ref{Green}]
We prove the statements for $\Delta^+$. That of $\Delta^-$ is similar and will be omitted. We will work with the reduced flow and find a Lagrangian sub-bundle $\tilde\Delta^+$ in $\mathfrak V$ which is invariant under $\widetilde{d\varphi}_t$ instead. It follows that the distribution $\Delta^+$ defined by
\begin{equation}\label{D+}
\Delta^+_\alpha:=\{v\in \vec H(\alpha)^\angle|v+\Real\vec H(\alpha)\in\tilde \Delta^+_\alpha\}
\end{equation}
is an invariant Lagrangian distribution.

Let $\tilde E_\alpha(t):=(\tilde e^1_\alpha(t),...,\tilde e^{n-1}_\alpha(t))^T$ be a canonical frame of the reduced curve $\tilde J_\alpha(t)$ at $\alpha$ and let $\tilde F_\alpha(t)=\dot{\tilde  E}_\alpha(t)$. Let $B(s,t)$ be the matrices defined by
\begin{equation}\label{E}
\tilde E_\alpha(t)=-B'(s,t)\tilde E_\alpha(s)+B(s,t)\tilde F_\alpha(s).
\end{equation}

By differentiating (\ref{E}) with respect to $t$, we obtain
\begin{equation}\label{F}
\tilde F_\alpha(t)=-\dot B'(s,t)\tilde E_\alpha(s)+\dot B(s,t)\tilde F_\alpha(s)
\end{equation}
and
\[
\begin{split}
&-\tilde{\mathcal R}_\alpha(t)B'(s,t)\tilde E_\alpha(s)+\tilde R_\alpha(t)B(s,t)\tilde F_\alpha(s)\\
&=\tilde{\mathcal R}_\alpha(t)\tilde E_\alpha(t)\\
&=\ddot B'(s,t)\tilde E_\alpha(s)-\ddot B(s,t)\tilde F_\alpha(s).
\end{split}
\]

It follows that
\[
\ddot B(s,t)=-\tilde{\mathcal R}_\alpha(t)B(s,t).
\]
Let $U_\alpha(s,t):=\dot B(s,t)B(s,t)^{-1}$. It satisfies
\[
\dot U_\alpha(s,t)+(U_\alpha(s,t))^2+\tilde{\mathcal R}_\alpha(t)=0.
\]

By the assumption of the theorem and Proposition \ref{noconjugatepoint}, $B(s,t)$ satisfies Assumption \ref{noconjugate}. It follows from Lemma \ref{Ubdd} that $U^+_\alpha(t)=\lim_{s\to\infty}U_\alpha(s,t)$ exists. Finally, we define
\begin{equation}\label{D+reduce}
\tilde\Delta_\alpha^+:=\textbf{span}\{\tilde F_\alpha(0)-U_\alpha^{+}(0)\tilde E_\alpha(0)\}.
\end{equation}

If we set $t=0$ in (\ref{E}) and (\ref{F}), then we obtain
\[
\tilde J_\alpha(s)=\textbf{span}\{\tilde F_\alpha(0)-U_\alpha(s,0)\tilde E_\alpha(0)\}.
\]
Therefore, we have $\lim_{s\to\infty}\tilde J_\alpha(s)=\tilde\Delta_\alpha^+$ and (1) follows. It also follows from (\ref{D+reduce}) that $\tilde\Delta^+\cap\tilde\Lambda=\{\Real\vec H\}$. Since $\vec H$ is not contained in $\Lambda$, (3) follows. (4) follows from (\ref{D+}) and (5) follows from taking skew-orthogonal complement in (4). Finally, by Proposition \ref{curves},
\[
d\varphi_s(\tilde J_\alpha(t))=\tilde J_{\varphi_s(\alpha)}(t-s).
\]
If we let $t\to\infty$, then we see that $\tilde\Delta^+$ is invariant under $\widetilde{d\varphi}_t$. Since $\vec H$ is also invariant under $d\varphi_t$, (2) follows.
\end{proof}

\smallskip

\section{Rigidity of the reduced curvature}\label{Rigidity}

In this section, we will give the proof of Theorem \ref{totalreduce}. In fact, Theorem \ref{totalreduce} is an immediate consequence of the following result and Theorem \ref{intro1}.

\begin{thm}
Let $c$ be a regular value of the Hamiltonian $H$ and let $\Sigma_c:=H^{-1}(c)$. Assume that the Hamiltonian vector field $\vec H$ is regular and its flow $\varphi_t$ preserves a Lagrangian distribution on $\Sigma_c$ which is everywhere transversal to $\Lambda$. Then the trace $\mathfrak{\tilde r}$ of the reduced curvature $\mathfrak{\tilde R}$ satisfies
\[
\int_{\Sigma_c}\mathfrak{\tilde r}_\alpha d\mu(\alpha)\leq 0,
\]
where $\mu$ is any invariant measure defined on $\Sigma_c$. Moreover, equality holds only if $\tilde{\mathfrak r}=0$ on the support of $\mu$.
\end{thm}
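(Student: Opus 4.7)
The plan is to represent the invariant Lagrangian distribution along each orbit by a symmetric matrix satisfying a matrix Riccati equation, then take the trace and exploit the invariance of $\mu$. Since the given Lagrangian distribution $\Delta$ lies in $T\Sigma_c=\vec H^\angle$, being Lagrangian forces $\vec H\in\Delta$, so $\Delta$ descends to a Lagrangian sub-bundle $\tilde\Delta:=\Delta/\Real\vec H$ of $\mathfrak V=\vec H^\angle/\Real\vec H$, invariant under $\widetilde{d\varphi}_t$. The transversality hypothesis $\Delta\cap\Lambda=\{0\}$ together with $\vec H\notin\Lambda$ descends to $\tilde\Delta\cap\tilde\Lambda=\{0\}$.

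For each $\alpha\in\Sigma_c$, fix a canonical frame $\tilde E_\alpha(t),\tilde F_\alpha(t)=\dot{\tilde E}_\alpha(t)$ of the reduced Jacobi curve $\tilde J_\alpha$. Since $\tilde\Delta_\alpha$ is Lagrangian and transversal to $\tilde J_\alpha(0)=\spn\{\tilde E_\alpha(0)\}$, there is a unique symmetric matrix $U_\alpha$ with $\tilde\Delta_\alpha=\spn\{\tilde F_\alpha(0)-U_\alpha\tilde E_\alpha(0)\}$. Combining Proposition \ref{curves} (the canonical frame at $\varphi_s(\alpha)$ can be taken as $\widetilde{d\varphi}_s\tilde E_\alpha(s+\cdot)$) with the invariance $\widetilde{d\varphi}_s(\tilde\Delta_\alpha)=\tilde\Delta_{\varphi_s(\alpha)}$ gives
\[
\tilde\Delta_\alpha=\spn\{\tilde F_\alpha(s)-U_{\varphi_s(\alpha)}\tilde E_\alpha(s)\}\quad\text{for every }s.
\]
Differentiating in $s$ and invoking the structural equations $\dot{\tilde E}_\alpha=\tilde F_\alpha$, $\dot{\tilde F}_\alpha=-\tilde{\mathcal R}_\alpha\tilde E_\alpha$ of Proposition \ref{structural}, matching the coefficients of $\tilde E_\alpha(s)$ and $\tilde F_\alpha(s)$ yields the matrix Riccati equation
\[
\frac{d}{ds}U_{\varphi_s(\alpha)}+U_{\varphi_s(\alpha)}^2+\tilde{\mathcal R}_\alpha(s)=0,
\]
the same type of equation that appears in the proof of Theorem \ref{Green}.

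Taking the trace at $s=0$ and using $\tr\tilde{\mathcal R}_\alpha(0)=\tilde{\mathfrak r}_\alpha$ gives
\[
\tfrac{d}{ds}\big|_{s=0}\tr U_{\varphi_s(\alpha)}=-\tr(U_\alpha^2)-\tilde{\mathfrak r}_\alpha.
\]
Integrating along an orbit from $0$ to $T$ and then over $\Sigma_c$ against $\mu$, the invariance of $\mu$ makes $\int\tr U_{\varphi_T(\alpha)}\,d\mu(\alpha)=\int\tr U_\alpha\,d\mu(\alpha)$, so the boundary contributions cancel and what remains is
\[
T\int_{\Sigma_c}\tilde{\mathfrak r}_\alpha\,d\mu(\alpha)=-T\int_{\Sigma_c}\tr(U_\alpha^2)\,d\mu(\alpha)\leq 0,
\]
since $U_\alpha$ symmetric makes $\tr(U_\alpha^2)\geq 0$.

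For the rigidity statement, equality forces $\tr(U_\alpha^2)=0$, hence $U_\alpha=0$, for $\mu$-a.e.\ $\alpha$. By invariance of $\mu$ and Fubini, $U_{\varphi_s(\alpha)}=0$ for almost every $(s,\alpha)$; continuity of $s\mapsto U_{\varphi_s(\alpha)}$ (a solution of the Riccati ODE) upgrades this to $U\equiv 0$ along the orbit through $\mu$-a.e.\ $\alpha$. Substituting back into the Riccati equation gives $\tilde{\mathcal R}_\alpha(s)\equiv 0$ along such orbits, so $\tilde{\mathfrak r}$ vanishes on a $\mu$-full set and, by continuity, on the whole support of $\mu$. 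The main technical point is the derivation of the Riccati equation, which requires careful bookkeeping of how the canonical frames at $\alpha$ and $\varphi_s(\alpha)$ relate under the reduced flow and how the fixed subspace $\tilde\Delta_\alpha$ is represented in each frame; the analytic input afterwards, exploiting $\tr(U^2)\geq 0$ against the invariance of $\mu$, is short.
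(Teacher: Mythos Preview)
Your proof is correct and follows essentially the same route as the paper's: reduce the invariant Lagrangian distribution to $\tilde\Delta\subset\mathfrak V$, represent it along each orbit by a symmetric matrix satisfying the matrix Riccati equation, take traces, and use invariance of $\mu$ to kill the boundary term so that $\int\tilde{\mathfrak r}\,d\mu=-\int\tr(U^2)\,d\mu\le 0$. The only cosmetic difference is that the paper introduces the fundamental solution $B_\alpha(t)$ of the second-order equation and sets $S_\alpha(t)=\dot B_\alpha(t)B_\alpha(t)^{-1}$, whereas you obtain the Riccati equation more directly by differentiating the moving representation $\tilde\Delta_\alpha=\spn\{\tilde F_\alpha(s)-U_{\varphi_s(\alpha)}\tilde E_\alpha(s)\}$; both lead to the same trace identity and the same rigidity argument.
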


\begin{proof}
Let $\Delta$ be the Lagrangian distribution defined on $\Sigma_c$ which is invariant under the flow $\varphi_t$. Let $\tilde \Delta$ be defined by $\tilde\Delta:=(\Delta\cap\vec H^\angle)/\Real\vec H$. Then $\tilde\Delta$ is a sub-bundle of $\mathfrak V$ which is invariant under $\widetilde{d\varphi}_t$. Let $E_\alpha(t)=(e^1_\alpha(t),...,e^{n-1}_\alpha(t))^T$ be a canonical frame at $\alpha$, let $F_\alpha(t)=\dot E_\alpha(t)$. Since $\tilde\Lambda$ and $\tilde\Delta$ intersect transversely, we can let $S_0$ be the matrix such that
\[
F_\alpha(0)+S_0E_\alpha(0)
\]
span the space $\tilde\Delta_\alpha$.

It follows that
\begin{equation}\label{EF}
F_\alpha(0)-S_0E_\alpha(0)=B_\alpha(t)^TF_\alpha(t)-\dot B_\alpha(t)^TE_\alpha(t)
\end{equation}
where $B_\alpha(t)$ is a solution of (\ref{2nd}) satisfying the initial conditions $B_\alpha(0)=I$ and $\dot B_\alpha(0)=S_0$ and $\mathcal R=\tilde{\mathcal R}_\alpha$ is the curvature of the reduced Jacobi curve $\tilde J_\alpha(t)$.

Note that $d\varphi_t(e_i(t))$ is vertical and $d\varphi_t(f_i(t))$ is horizontal. Since the components of (\ref{EF}) span $\tilde \Delta$, $\tilde\Delta$ is invariant under $\widetilde{d\varphi_t}$, and $\tilde\Delta$ is transversal to $\tilde\Lambda$, the matrix $B_\alpha (t)$ is invertible for all $t$ and all $\alpha$.

Let $S_\alpha(t)=\dot B_\alpha(t)B_\alpha(t)^{-1}$. Then $S_\alpha$ is the solution of (\ref{S}) which satisfies the initial condition $S_\alpha(0)=S_0$. It follows that the trace $\textbf{tr}(S_\alpha(t))$ of $S_\alpha(t)$ satisfies the following equation
\begin{equation}\label{smallRiccati}
\textbf{tr}(\dot S_\alpha(t))+\textbf{tr}(S_\alpha(t)^2)+\textbf{tr}(\mathcal{\tilde R}_\alpha(t))=0.
\end{equation}

By integrating (\ref{smallRiccati}) with respect to time $t$, we obtain
\begin{equation}\label{integRiccati}
\textbf{tr}(S_\alpha(1))-\textbf{tr}(S_\alpha(0))+\int_0^1\textbf{tr}(S_\alpha(t)^2)dt+\int_0^1\textbf{tr}(\mathcal{\tilde R}_\alpha(t))dt=0.
\end{equation}

Since $\textbf{tr}(S_\alpha(t))$ is independent of the choice of frames $\tilde E_\alpha(t)$, it defines a function $\alpha\mapsto \textbf{tr}(S_\alpha(t))$. Moreover, we have $\textbf{tr}(S_\alpha(t))=\textbf{tr}(S_{\varphi_t(\alpha)}(0))$. Therefore, (\ref{integRiccati}) becomes
\begin{equation}\label{integRiccati2}
\textbf{tr}(S_{\varphi_1(\alpha)})-\textbf{tr}(S_\alpha(0))+\int_0^1\textbf{tr}(S_{\varphi_t(\alpha)}(0)^2)dt+\int_0^1\tilde{\mathfrak r}_{\varphi_t(\alpha)}dt=0.
\end{equation}

If we integrate (\ref{integRiccati2}) with respect to the invariant measure $\mu$, then we obtain
\begin{equation}\label{integRiccati3}
\int_{\Sigma_c}\textbf{tr}(S_\alpha (0)^2)+\tilde{\mathfrak r}_\alpha d\mu(\alpha)=0.
\end{equation}

It follows that $\int_{\Sigma_c}\tilde{\mathfrak r}_{\alpha}d\mu(\alpha)\leq 0$. Moreover, equality holds only if $S_\alpha(t)= 0$ for $\mu$-almost all $\alpha$. Since $t\mapsto S_\alpha(t)$ is smooth, there is a set of full $\mu$-measure $\mathcal O$ in $M$ such that $S_\alpha(t)= 0$ for all $t$ in $[0,1]$ and for each $\alpha$ in $\mathcal O$.

Finally, it follows from (\ref{smallRiccati}) and the smoothness of $\tilde{\mathfrak r}$ that $\tilde{\mathfrak r}=0$ on the support of $\mu$.
\end{proof}

\smallskip

\section{Hyperbolicity under negative reduced curvature}\label{Hyperbolicity}

In this section, we show that the Hamiltonian flow of a monotone Hamiltonian vector field is Anosov if the reduced curvature is bounded above and below by negative constants. First, we show that if the reduced curvature is everywhere non-positive, then the Hamiltonian flow has no conjugate point. A proof of this can be found in \cite{Ro}. We supply the proof here for completeness. From here on, unless otherwise stated, we endow the manifold $M$ with a Riemannian metric denoted by $\left<\cdot,\cdot\right>$. It is defined by the condition that the canonical frame $E(0)$ and $F(0)$ of the Jacobi curve is orthonormal. The corresponding norm is denoted by $|\cdot|$. Similarly, we also endow the vector bundle $\mathfrak V$ with a Riemannian metric denoted using the same symbol $\left<\cdot,\cdot\right>$ such that the canonical frame $\tilde E(0)$ and $\tilde F(0)$ of the reduced Jacobi curve is orthonormal.

\begin{thm}
Assume that the reduced curvature of a regular Hamiltonian vector field $\vec H$ is non-positive. Then, for each $\tilde w$ in $\tilde\Lambda$, $|\widetilde{d\varphi}_t(\tilde w)^h|$ is increasing for all $t>0$ and decreasing for all $t<0$. In particular, the flow of $\vec H$ has no conjugate point.
\end{thm}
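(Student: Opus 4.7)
The plan is to reduce the monotonicity of $|\widetilde{d\varphi}_t(\tilde w)^h|$ to the standard convexity argument for a matrix Jacobi equation driven by $\tilde{\mathcal R}_\alpha$. Fix $\alpha\in M$ and a canonical frame $\tilde E_\alpha(t),\tilde F_\alpha(t)$ of the reduced Jacobi curve $\tilde J_\alpha$; write $\tilde w=a^{T}\tilde E_\alpha(0)$ for some $a\in\mathbb{R}^{n-1}$. By the computation in the proof of Proposition~\ref{curves}, applied in the reduced setting, $\widetilde{d\varphi}_t\tilde E_\alpha(t)\in\tilde\Lambda_{\varphi_t(\alpha)}$ and $\widetilde{d\varphi}_t\tilde F_\alpha(t)\in\tilde J^o_{\varphi_t(\alpha)}(0)$ together give the parameter-$0$ value of a canonical frame at $\varphi_t(\alpha)$, so they are orthonormal under the Riemannian metric on $\mathfrak V$. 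Hence the horizontal component of $\widetilde{d\varphi}_t\tilde w$ can be read off as the $\widetilde{d\varphi}_t\tilde F_\alpha(t)$-coefficient in its expansion, and its squared norm is just the Euclidean norm of that coefficient.

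The crux is to expand the \emph{initial} frame $\tilde E_\alpha(0)$ in the moving Darboux basis $\{\tilde E_\alpha(t),\tilde F_\alpha(t)\}$ of $\mathfrak V_\alpha$:
\[
\tilde E_\alpha(0)=P(t)\tilde E_\alpha(t)+Q(t)\tilde F_\alpha(t),\qquad P(0)=I,\ Q(0)=0.
\]
Differentiating in $t$ and substituting the structural equations $\dot{\tilde E}_\alpha=\tilde F_\alpha$ and $\dot{\tilde F}_\alpha=-\tilde{\mathcal R}_\alpha\tilde E_\alpha$, then using the linear independence of $\tilde E_\alpha(t)$ and $\tilde F_\alpha(t)$, yields $\dot P=Q\tilde{\mathcal R}_\alpha$ and $\dot Q=-P$, so
\[
\ddot Q(t)+Q(t)\tilde{\mathcal R}_\alpha(t)=0,\qquad Q(0)=0,\ \dot Q(0)=-I.
\]
Applying $\widetilde{d\varphi}_t$ to the expansion and invoking the previous paragraph gives $|\widetilde{d\varphi}_t(\tilde w)^h|^{2}=|a^{T}Q(t)|^{2}$. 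Setting $\zeta(t):=a^{T}Q(t)$, the quantity to control is the squared norm of a row-vector solution of the matrix Jacobi equation $\ddot\zeta+\zeta\tilde{\mathcal R}_\alpha=0$ with $\zeta(0)=0$ and $\dot\zeta(0)=-a^{T}$.

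The rest is the standard second-derivative test. Since $\tilde{\mathcal R}_\alpha$ is symmetric and non-positive,
\[
\frac{d^{2}}{dt^{2}}|\zeta|^{2}=2|\dot\zeta|^{2}-2\zeta\tilde{\mathcal R}_\alpha\zeta^{T}\geq 2|\dot\zeta|^{2}\geq 0,
\]
so $\frac{d}{dt}|\zeta|^{2}=2\zeta\dot\zeta^{T}$ is non-decreasing; because it vanishes at $t=0$, it is non-negative for $t>0$ and non-positive for $t<0$, which is the stated monotonicity. For the no-conjugate-point conclusion, if some nonzero $\tilde w$ satisfied $\widetilde{d\varphi}_{t_{0}}\tilde w\in\tilde\Lambda_{\varphi_{t_0}(\alpha)}$ for $t_{0}>0$, then $\zeta(t_0)=0$; monotonicity together with $\zeta(0)=0$ forces $\zeta\equiv 0$ on $[0,t_{0}]$, hence $\dot\zeta\equiv 0$ there, contradicting $\dot\zeta(0)=-a^{T}\neq 0$, so the horizontal norm is strictly positive for $t\neq 0$ and conjugate points are ruled out. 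The main obstacle, and the only nontrivial computation, is the bookkeeping in the second paragraph: choosing the canonical frame at $\varphi_t(\alpha)$ via $\widetilde{d\varphi}_t$, expanding $\tilde E_\alpha(0)$ (not the time-$t$ frame) in the moving basis at $\alpha$, and extracting the clean equation $\ddot Q+Q\tilde{\mathcal R}_\alpha=0$. Once this reduction is in hand, both the monotonicity and the absence of conjugate points are immediate from the convexity estimate above.
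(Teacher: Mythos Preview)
Your proof is correct and takes a genuinely different route from the paper. Both arguments set up the same matrix Jacobi equation (your $Q$ is $-B^{T}$ in the paper's notation) and identify $|\widetilde{d\varphi}_t(\tilde w)^h|^{2}$ with the squared Euclidean length of a coefficient vector. The divergence is in how monotonicity is extracted. The paper introduces $S(t)=\dot B(t)B(t)^{-1}$ and invokes the matrix Riccati comparison principle (Theorem~\ref{comparison}) against the explicit solution $\tfrac{1}{t}I$ of the zero-curvature equation to obtain $S(t)\geq\tfrac{1}{t}I>0$, whence $\tfrac{d}{dt}|B(t)b|^{2}=2b^{T}B(t)^{T}S(t)B(t)b>0$; a second comparison with $\dot S+\mathcal R=0$ gives an upper bound on $S$, which is how the paper rules out finite-time blow-up and hence conjugate points. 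You bypass the Riccati machinery entirely and argue directly that $\tfrac{d^{2}}{dt^{2}}|\zeta|^{2}\geq 2|\dot\zeta|^{2}\geq 0$, so $|\zeta|^{2}$ is convex with vanishing derivative at $0$; the no-conjugate-point conclusion then follows from the clean convexity contradiction you give. Your approach is more elementary and self-contained; the paper's approach costs the comparison theorem but delivers the quantitative bound $S(t)\geq\tfrac{1}{t}I$ (and an upper bound) as by-products, which are in the spirit of the estimates used later in Sections~\ref{OnThe} and~\ref{On}. One small remark: your convexity step yields only non-strict monotonicity at first pass, but the strict version is recoverable from the argument you already give---if $\tfrac{d}{dt}|\zeta|^{2}$ vanished at some $t_{1}>0$, convexity and $\left.\tfrac{d}{dt}|\zeta|^{2}\right|_{t=0}=0$ would force $|\zeta|^{2}$ constant (hence zero) on $[0,t_{1}]$, the same contradiction.
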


\begin{proof}
We will only do the case $t>0$. Let $\tilde E_\alpha(t)=(\tilde e^1_\alpha(t),...,\tilde e^{n-1}_\alpha(t))^T$ be a canonical frame of the Jacobi curve at $\alpha$. Let $B(t)$ be the solution of (\ref{2nd}) with initial conditions $B(0)=0$ and $\dot B(0)=I$. Then
\[
\tilde E_\alpha(0)=\dot B(t)^T\tilde E_\alpha(t)-B(t)^T\tilde F_\alpha(t).
\]

In other words, if we define $S(t)=\dot B(t)B(t)^{-1}$, then $S(t)$ is a solution of the matrix Riccati equation (\ref{S}) which is defined wherever $B(t)$ is invertible. Since $\frac{1}{t}I$ is also a solution of (\ref{S}) with $R(t)\equiv 0$, it follows from Theorem \ref{comparison} that $S(t)\geq \frac{1}{t}I$ for all $t>0$. It also follows from Theorem \ref{comparison} that $S(t)$ is bounded above by the solutions of the equation
\[
\dot S(t)+\mathcal R_\alpha(t)=0.
\]
It follows that $S(t)$ is defined for all $t>0$ and $B(t)$ is invertible. Therefore, by Proposition \ref{noconjugatepoint}, there is no point conjugate to $\alpha$ along $\varphi_t$.

Moreover, if we let $\tilde w=b^T\tilde E_\alpha(0)$, then
\[
\begin{split}
\frac{d}{dt}|(\widetilde{d\varphi}_t(\tilde w))^h|^2&=2b^TB(t)^T\dot B(t)b\\
&=2b^TB(t)^TS(t)B(t)b> 0
\end{split}
\]
for all $t>0$.
\end{proof}

\begin{thm}\label{negativecur}
Assume that there are positive constants $k$ and $K$ such that the reduced curvature $\tilde{\mathfrak R}$ satisfies $-K^2I\geq \tilde{\mathfrak R}\geq -k^2I$ on $\Sigma_c:=H^{-1}(c)$, where $c$ is a regular value of $H$. Then there is a Riemannian inner product and invariant distributions $\Delta^s$ and $\Delta^u$ defined on $\bigcup_{\alpha\in\Sigma_c}\vec H^\angle(\alpha)$ satisfying the followings:
\begin{enumerate}
\item $\vec H^\angle=\textbf{span}\{\vec H\}\oplus\Delta^u\oplus\Delta^s$,
\item $\Delta^+=\textbf{span}\{\vec H\}\oplus\Delta^s$,
\item $\Delta^-=\textbf{span}\{\vec H\}\oplus\Delta^u$,
\item there is a constant $C>0$ such that $|d\varphi_{t}(w)|\leq Ce^{-Kt}|w|$ for all $t\geq 0$ and for all $w$ in $\Delta^s$,
\item  $|d\varphi_{-t}(w)|\leq Ce^{-Kt}|w|$ for all $t\geq 0$ and for all $w$ in $\Delta^u$.
\end{enumerate}

In particular, the flow $\varphi_t$ is Anosov on $\Sigma_c$.
\end{thm}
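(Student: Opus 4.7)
The plan is to combine Theorem \ref{Green}'s invariant Lagrangian distributions $\Delta^\pm$ with a matrix Riccati comparison tailored to the bound $-k^2 I\le\tilde{\mathfrak R}\le-K^2 I$, and then to lift the resulting hyperbolic splitting from the reduced bundle $\mathfrak V=\vec H^\angle/\Real\vec H$ back to $\vec H^\angle$. Since $\tilde{\mathfrak R}\le -K^2 I<0$ on $\Sigma_c$, the preceding theorem on non-positive reduced curvature rules out conjugate points, so Theorem \ref{Green} applies and supplies $\Delta^\pm$ together with their reductions $\tilde\Delta^\pm$ in $\mathfrak V$. In the canonical frame of the reduced Jacobi curve, $\tilde\Delta^\pm_\alpha=\textbf{span}\{\tilde F_\alpha(0)-U^\pm_\alpha(0)\tilde E_\alpha(0)\}$, where the symmetric matrices $U^\pm_\alpha$ arise as the $s\to\pm\infty$ limits of solutions of the matrix Riccati equation $\dot U+U^2+\tilde{\mathcal R}_\alpha=0$, as in the proof of Theorem \ref{Green}.

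Next, I would apply the Riccati comparison (Theorem \ref{comparison}) against the constant-curvature equations $\dot U+U^2-K^2 I=0$ and $\dot U+U^2-k^2 I=0$, which have constant solutions $U=\pm KI$ and $U=\pm kI$ respectively. The bound $-k^2 I\le\tilde{\mathcal R}\le-K^2 I$ then forces $-kI\le U^+_\alpha(0)\le-KI$ and $KI\le U^-_\alpha(0)\le kI$; in particular $U^-_\alpha(0)-U^+_\alpha(0)\ge 2KI>0$, so $\tilde\Delta^+$ and $\tilde\Delta^-$ are everywhere transversal and $\mathfrak V=\tilde\Delta^+\oplus\tilde\Delta^-$. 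For the exponential contraction, I would represent $\widetilde{d\varphi}_t(\tilde w)$ for $\tilde w\in\tilde\Delta^+_\alpha$ in the canonical frame at $\varphi_t(\alpha)$; the structural equations together with Proposition \ref{curves} translate the invariance of $\tilde\Delta^+$ into the first-order relation $\dot q(t)=U^+_{\varphi_t(\alpha)}(0)\,q(t)$ for the $\tilde F$-component of $\widetilde{d\varphi}_t(\tilde w)$. Since $U^+\le-KI$, Gronwall then yields $|\widetilde{d\varphi}_t(\tilde w)|\le\sqrt{1+k^2}\,e^{-Kt}|\tilde w|$ for $t\ge 0$, and the time-reversed argument handles $\tilde\Delta^-$.

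Finally, I would lift the splitting from $\mathfrak V$ to $\vec H^\angle$ by defining $\Delta^s_\alpha\subset\Delta^+_\alpha$ (respectively $\Delta^u_\alpha\subset\Delta^-_\alpha$) as the set of $w$ whose forward (resp.\ backward) orbit stays bounded; linearity is immediate, and the one-parameter family of lifts $\{w_0+c\vec H:c\in\Real\}$ of a prescribed $\tilde w\in\tilde\Delta^+_\alpha$ contains exactly one value of $c$ producing a bounded forward orbit, this constant being given by the convergent improper integral of the $\vec H$-component of $d\varphi_t(w_0)$, whose convergence uses the exponential decay in $\mathfrak V$ and the compactness of $\Sigma_c$. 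The resulting $\Delta^s,\Delta^u$ project isomorphically onto $\tilde\Delta^+,\tilde\Delta^-$, complement $\Real\vec H$ in $\Delta^+,\Delta^-$, and inherit the sharp exponential rate; items (1)--(5) then follow. The main obstacle here is precisely this last lift: the Riccati analysis gives a clean hyperbolic picture in $\mathfrak V$, but promoting it to a genuinely flow-invariant splitting of $\vec H^\angle$ with the sharp rate intact requires pinning down the correct $\vec H$-component, and this is where compactness of $\Sigma_c$ and the two-sided curvature bound are essentially used.
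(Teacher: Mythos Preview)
Your Riccati analysis in the reduced bundle $\mathfrak V$ is essentially what the paper does: the bounds $-kI\le U^+\le -KI$ (and the mirror inequalities for $U^-$) come from the comparison theorem exactly as you outline, and the exponential decay of $|\widetilde{d\varphi}_t(\tilde w)|$ for $\tilde w\in\tilde\Delta^+$ follows via the solution $D^+$ of $\dot D^+=U^+D^+$ (your $q$), with the same rate $e^{-Kt}$ and a constant differing from the paper's $\sqrt{(1+k^2)/(1+K^2)}$ only by a harmless factor.

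The discrepancy is in the lift from $\mathfrak V$ to $\vec H^\angle$. You invoke compactness of $\Sigma_c$ to make the improper integral defining the correct $\vec H$-component converge, but compactness is \emph{not} among the hypotheses of Theorem~\ref{negativecur}; indeed the paper explicitly advertises (see the remark following Theorem~\ref{anosov} in the introduction) that the whole point of this proof is to replace Agrachev's compactness assumption by the two-sided curvature pinching. So as written, your final paragraph does not establish the theorem in the stated generality---it only recovers the result already in \cite{Ag2}. The paper sidesteps the issue entirely by citing \cite[Proposition~5.1]{Wo}, which promotes a hyperbolic splitting of the quotient $\vec H^\angle/\Real\vec H$ to a genuine Anosov splitting of $T\Sigma_c$ using only the uniform exponential rates already obtained in $\mathfrak V$. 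If you want to carry this out by hand rather than by citation, the right mechanism is not an improper integral along the orbit (which indeed needs some global control) but a cone-field or graph-transform argument driven purely by the uniform contraction and expansion rates; that works on any $\Sigma_c$, compact or not.
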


\begin{proof}
We use the notations in the proof of Theorem \ref{Green}. Let $\dot D^+(t)=U^+(t)D^+(t)$ with $D^+(0)=I$. If $\tilde w$ is a vector in $\tilde\Delta^+$, then there is a vector $b$ such that
\[
\tilde w=b^T(-\dot D^+(t)^T\tilde E_\alpha(t)+D^+(t)^T\tilde F_\alpha(t)).
\]

We extend the canonical inner product defined on $\tilde\Lambda$ to an inner product, still denoted by  $\left<\cdot,\cdot\right>$, of the bundle $\mathfrak V$ such that the basis $\tilde e^1_\alpha(0),..., \tilde e^{n-1}_\alpha(0),\tilde f^1_\alpha(0),...,\tilde f^{n-1}_\alpha(0)$ is orthonormal. It follows that
\begin{equation}\label{inD}
|\widetilde{d\varphi}_t(\tilde w)|^2=|D^+(t)b|^2+|U^+(t)D^+(t)b|^2.
\end{equation}

By Lemma \ref{nonpositiveRiccati}, we have
\[
U^+\leq-kI \quad\text{ and }\quad |D^+(t)b|^2\leq |b|^2e^{-2Kt}.
\]

By combining this with (\ref{inD}), we obtain
\[
\begin{split}
|\widetilde{d\varphi}_t(\tilde w)|^2&\leq \left(1+k^2\right)\,|D^+(t)b|^2\\
&\leq \left(1+k^2\right)\,|b|^2e^{-2Kt}\\
&\leq \frac{1+k^2}{1+K^2}\,|\tilde w|^2e^{-2Kt}.
\end{split}
\]

The rest follows from \cite[Proposition 5.1]{Wo} and the definition of $\Delta^+$ in the proof of Theorem \ref{Green}.
\end{proof}

\smallskip

\section{On the invariant bundles of the reduced flow}\label{OnThe}

Let $\tilde J_\alpha$ be the reduced Jacobi curve of $J_\alpha$. The reduced Jacobi curve and the derivative curve $\tilde J^o_\alpha$ give a splitting of the bundle $\mathfrak V=\tilde J_\alpha(0)\oplus\tilde J_\alpha^o(0)$. Let $\tilde v$ be an element in $\mathfrak V$. The $\tilde J_\alpha(0)$- and the $\tilde J_\alpha^o(0)$-components of $\tilde v$ are denoted by $\tilde v^v$ and $\tilde v^h$ respectively.

In this section, we prove the following characterization of the invariant bundles $\tilde\Lambda^\pm$ defined in the proof of Theorem \ref{Green}.

\begin{thm}\label{deltachar}
Assume that the Hamiltonian vector field $\vec H$ is monotone. Assume that $\Sigma_c:=H^{-1}(c)$ is compact and the flow of $\vec H$ has no conjugate point on $\Sigma_c$. Suppose that there is no vector $\tilde w$ in $\mathfrak V$ such that $|\widetilde{d\varphi}_t(\tilde w)^h|$ is bounded for all $t$. Then
\[
\tilde\Delta^\pm=\left\{\tilde w \Big| \sup_{\pm t\geq 0}|\widetilde{d\varphi}_t(\tilde w)^h|<+\infty\right\}.
\]
\end{thm}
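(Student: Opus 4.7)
The proof strategy is to first translate the characterization into a statement about Jacobi fields along orbits of the flow, then prove both inclusions by an Eberlein-type shifting argument applied in tandem with a splitting of the form $\mathfrak V = \tilde\Delta^+ \oplus \tilde\Delta^-$.

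Specifically, writing a fixed $\tilde w \in \mathfrak V_\alpha$ in the reduced canonical frame as $\tilde w = X(\tau)^T \tilde E_\alpha(\tau) + Y(\tau)^T \tilde F_\alpha(\tau)$ for all $\tau$ and differentiating in $\tau$ using the structural equations of Proposition \ref{structural} yields $X = -\dot Y$ together with the reduced Jacobi equation
\[
\ddot Y(\tau) + \tilde{\mathcal R}_\alpha(\tau)\, Y(\tau) = 0.
\]
The identity $\widetilde{d\varphi}_\tau \tilde E_\alpha(\tau) = \tilde E_{\varphi_\tau(\alpha)}(0)$ (obtained as in the proof of Proposition \ref{curves}), together with the same identity for $\tilde F$, gives $|(\widetilde{d\varphi}_\tau \tilde w)^h| = |Y(\tau)|$. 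Moreover, the construction in the proof of Theorem \ref{Green} identifies $\tilde w \in \tilde\Delta^+_\alpha$ with the condition that $Y$ is the pointwise limit as $s \to +\infty$ of the unique Jacobi fields $Y_s$ satisfying $Y_s(0) = Y(0)$ and $Y_s(s) = 0$, and similarly for $\tilde\Delta^-_\alpha$ as $s \to -\infty$. Thus the claimed equality becomes a statement about which Jacobi fields are forward-bounded.

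For the forward inclusion $\tilde\Delta^+_\alpha \subseteq \{\tilde w : \sup_{\tau \geq 0}|Y(\tau)| < \infty\}$, suppose $Y$ is unbounded on $[0, \infty)$ for some $\tilde w \in \tilde\Delta^+_\alpha$. Choose $\tau_n \to \infty$ with $|Y(\tau_n)| \to \infty$ and $s_n \geq 2 \tau_n$ so that $|Y_{s_n}(\tau_n)| \geq |Y(\tau_n)|/2$, and let $t^*_n \in (0, s_n)$ be a point where $|Y_{s_n}|$ attains its maximum $M_n$; note $M_n \to \infty$. Since $|Y_{s_n}(0)| = |Y(0)|$ is fixed and $Y_s \to Y$ in $C^1$ on compact sets, $t^*_n \to \infty$ along a subsequence; a symmetric argument applied to the time-reversed Jacobi field (using that $Y_{s_n}(s_n) = 0$) combined with the uniform bound on $\tilde{\mathcal R}$ on $\Sigma_c$ yields $s_n - t^*_n \to \infty$ after further extraction. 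Rescale: $\tilde Y_n(\tau) := Y_{s_n}(\tau + t^*_n)/M_n$ is a Jacobi field along the orbit of $\varphi_{t^*_n}(\alpha)$ satisfying $|\tilde Y_n| \leq 1$ on $[-t^*_n, s_n - t^*_n]$ and $|\tilde Y_n(0)| = 1$. Compactness of $\Sigma_c$ and continuity of $\tilde{\mathcal R}$ permit passage to a subsequence with $\varphi_{t^*_n}(\alpha) \to \beta \in \Sigma_c$ and $\tilde Y_n \to \tilde Y_\infty$ locally uniformly in $C^2$; the limit $\tilde Y_\infty$ is a nontrivial Jacobi field along the orbit of $\beta$ with $|\tilde Y_\infty| \leq 1$ on all of $\mathbb R$, contradicting the hypothesis. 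The same argument applied to the time-reversed flow establishes $\tilde\Delta^-_\alpha \subseteq \{\tilde w : \sup_{\tau \leq 0}|Y(\tau)| < \infty\}$.

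For the reverse inclusion, the forward inclusions in both time directions combined with the hypothesis force $\tilde\Delta^+_\alpha \cap \tilde\Delta^-_\alpha = \{0\}$, since any common nonzero element would yield a doubly bounded Jacobi field. Because $\dim \tilde\Delta^\pm_\alpha = n-1$ and $\dim \mathfrak V_\alpha = 2(n-1)$, we obtain $\mathfrak V_\alpha = \tilde\Delta^+_\alpha \oplus \tilde\Delta^-_\alpha$. Given $\tilde w$ with $\sup_{\tau \geq 0}|(\widetilde{d\varphi}_\tau \tilde w)^h| < \infty$, decompose $\tilde w = \tilde w_+ + \tilde w_-$ with $\tilde w_\pm \in \tilde\Delta^\pm_\alpha$. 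The Jacobi field $Y_-$ associated with $\tilde w_-$ is backward bounded by the forward inclusion for $\tilde\Delta^-$; using linearity $(\widetilde{d\varphi}_\tau \tilde w_-)^h = (\widetilde{d\varphi}_\tau \tilde w)^h - (\widetilde{d\varphi}_\tau \tilde w_+)^h$ with both right-hand terms forward bounded, $Y_-$ is also forward bounded, hence doubly bounded, hence zero by hypothesis. Therefore $\tilde w_- = 0$ and $\tilde w = \tilde w_+ \in \tilde\Delta^+_\alpha$; the reverse inclusion for $\tilde\Delta^-$ follows by time reversal. The main technical obstacle is arranging both $t^*_n \to \infty$ and $s_n - t^*_n \to \infty$ in the shifting construction so that the limit Jacobi field is genuinely bounded on all of $\mathbb R$; the fixed boundary values of $Y_{s_n}$ push the interior maxima away from the endpoints, and the uniform bounds on $\tilde{\mathcal R}$ from compactness of $\Sigma_c$ supply both the ODE control needed for $C^2$-convergence and the estimates needed to prevent $t^*_n$ or $s_n - t^*_n$ from remaining bounded along every subsequence.
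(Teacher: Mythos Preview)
Your overall architecture is sound and Eberlein-style, but the step establishing $s_n - t^*_n \to \infty$ is not justified as written. The argument for $t^*_n \to \infty$ uses that $Y_{s_n}(0) = Y(0)$ is fixed \emph{and} that $Y_{s_n} \to Y$ on compacta, so $|Y_{s_n}|$ is uniformly bounded on each $[0,T]$. At the other end the situation is not symmetric: you only know $Y_{s_n}(s_n)=0$, and the curvature bound alone gives $|Y_{s_n}(t)| \le c(T)\,|\dot Y_{s_n}(s_n)|$ on $[s_n-T,s_n]$, with no a priori control on $|\dot Y_{s_n}(s_n)|$. What actually rules out $s_n - t^*_n$ remaining bounded is the no-conjugate-point hypothesis: either observe directly from Lemma~\ref{Jacobiblowup} that $|\dot Y_{s_n}(s_n)| = |(B(s_n)^{-1})^T Y(0)| \to 0$, or else pass to the rescaled limit along a subsequence with $s_n - t^*_n \to T'$ and obtain a nontrivial Jacobi field $\tilde Y_\infty$ with $\tilde Y_\infty(T')=0$ and $|\tilde Y_\infty|\le 1$ on $(-\infty,T']$, which contradicts Lemma~\ref{Jacobiblowup}. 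Either way, the input is the blowup of vanishing Jacobi fields, not merely the bound on $\tilde{\mathcal R}$.

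Apart from this, your route differs from the paper's in a useful way. The paper proves the inclusion \{forward-bounded\}$\,\subseteq\tilde\Delta^+$ directly as Lemma~\ref{bddDelta}, again via Lemma~\ref{Jacobiblowup}, and notably without invoking the ``no globally bounded Jacobi field'' hypothesis; it then proves $\tilde\Delta^+\subseteq\,$\{forward-bounded\} by first establishing the quasi-monotonicity estimate Lemma~\ref{stbdd} for vertical vectors and transferring it to $\tilde\Delta^+$ via the approximants $\tilde w_\tau$. You instead run the shifting/compactness argument directly on the approximating fields $Y_{s_n}$ for the second inclusion, and for the first inclusion you use the splitting $\mathfrak V = \tilde\Delta^+\oplus\tilde\Delta^-$ obtained from the second. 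Your packaging is arguably cleaner, but it makes both inclusions depend on the extra hypothesis, whereas the paper's Lemma~\ref{bddDelta} isolates a fact valid under the weaker assumption $\tilde{\mathfrak R}\ge -k^2 I$ alone.
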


In particular, the above theorem applies when the flow of $\vec H$ is Anosov on $\Sigma_c$.

\begin{lem}\label{bddDelta}
Let $c$ be a regular value of $H$. Assume that the Hamiltonian vector field $\vec H$ is monotone and its flow has no conjugate point on $\Sigma_c$.
Assume that the reduced curvature $\tilde{\mathfrak R}$ of $\vec H$ satisfies $\tilde{\mathfrak R}\geq -k^2I$ on $\Sigma_c$. Let $\tilde v$ be in $\mathfrak V_\alpha$ with $\alpha$ contained in $\Sigma_c$ and such that $|\widetilde{d\varphi}_t(\tilde v)^h|$ is uniformly bounded for all $t>0$ (resp. $t<0$). Then $\tilde v$ is contained in $\tilde\Delta^+_\alpha$ (resp. $\tilde\Delta^-_\alpha$).
\end{lem}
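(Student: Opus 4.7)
The plan is to approximate $\tilde v$ by vectors that lie in $\tilde J_\alpha(s)$, identify the limit with an element of $\tilde\Delta^+_\alpha$, and then use the curvature bound to show that the approximation actually captures $\tilde v$ itself. I would first fix a canonical Darboux frame $\tilde E_\alpha(0),\tilde F_\alpha(0)$ of $\tilde J_\alpha$ and write $\tilde v=a^T\tilde E_\alpha(0)+b^T\tilde F_\alpha(0)$ with $a,b\in\Real^{n-1}$. Let $\Phi(t),\Psi(t)$ denote the matrix Jacobi solutions of $\ddot Y+\tilde{\mathcal R}_\alpha(t)Y=0$ with $\Phi(0)=I,\dot\Phi(0)=0$ and $\Psi(0)=0,\dot\Psi(0)=I$. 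Combining Proposition \ref{curves} with the identifications $\widetilde{d\varphi}_t(\tilde E_\alpha(t))=\tilde E_{\varphi_t(\alpha)}(0)$ and $\widetilde{d\varphi}_t(\tilde F_\alpha(t))=\tilde F_{\varphi_t(\alpha)}(0)$ gives
\[
\widetilde{d\varphi}_t(\tilde v)=-\dot M(t)^T\tilde E_{\varphi_t(\alpha)}(0)+M(t)^T\tilde F_{\varphi_t(\alpha)}(0),\quad M(t):=\Phi(t)b-\Psi(t)a,
\]
so $M$ is the Jacobi vector field with $M(0)=b$, $\dot M(0)=-a$, and since the canonical frame is orthonormal the hypothesis becomes $\sup_{t\geq 0}|M(t)|<\infty$.

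For each $s>0$ the absence of conjugate points guarantees that $\Psi(s)$ is invertible. I would then set $a_s:=\Psi(s)^{-1}\Phi(s)b$ and $\tilde v_s:=a_s^T\tilde E_\alpha(0)+b^T\tilde F_\alpha(0)$; the Jacobi field $M_s(t):=\Phi(t)b-\Psi(t)a_s$ vanishes at $s$, hence $\tilde v_s\in\tilde J_\alpha(s)$. The explicit computation of $B(s,0)$ in the proof of Theorem \ref{Green} identifies $U_\alpha(s,0)=-(\Psi(s)^{-1}\Phi(s))^T$, so $a_s=-(U_\alpha(s,0))^Tb\to a_\infty:=-(U^+_\alpha(0))^Tb$ and $\tilde v_s\to\tilde v_\infty:=a_\infty^T\tilde E_\alpha(0)+b^T\tilde F_\alpha(0)$, which lies in $\tilde\Delta^+_\alpha$ by \eqref{D+reduce}. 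From $M(s)=\Phi(s)b-\Psi(s)a=\Psi(s)(a_s-a)$ I obtain the key identity
\[
a-a_s=-\Psi(s)^{-1}M(s),
\]
so the desired conclusion $\tilde v=\tilde v_\infty\in\tilde\Delta^+_\alpha$ will follow as soon as the right-hand side tends to zero.

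The hard part is precisely this last step. Since $|M(s)|$ is uniformly bounded, it reduces to showing $\|\Psi(s)^{-1}\|\to 0$, equivalently $|\Psi(s)w|\to\infty$ for every $w\neq 0$, and this is where the lower bound $\tilde{\mathfrak R}_\alpha\geq -k^2 I$ genuinely enters. The plan is to sandwich the matrix Riccati solution $\mathcal W(s):=\dot\Psi(s)\Psi(s)^{-1}$ by comparison: Theorem \ref{comparison} applied to $\dot{\mathcal W}+\mathcal W^2+\tilde{\mathcal R}_\alpha=0$ together with $\tilde{\mathfrak R}_\alpha\geq -k^2I$ yields the upper estimate $\mathcal W(s)\leq k\coth(ks)I$, while the monotone ordering of Riccati solutions in the no-conjugate-point regime, Lemma \ref{Ubdd}, and the cocycle identity $U^+_{\varphi_t(\alpha)}(0)=U^+_\alpha(t)$ provide the uniform lower bound $\mathcal W(s)\geq U^+_\alpha(s)$. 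Feeding these two-sided bounds into the differential identity $\tfrac{d}{ds}|\Psi(s)w|^2=2\langle\Psi(s)w,\mathcal W(s)\Psi(s)w\rangle$ and integrating along the orbit would then force the required divergence of $|\Psi(s)w|$ and close the loop. This Riccati-comparison step is the delicate point of the proof; once it is in place, the displayed identity yields $a=a_\infty$, and hence $\tilde v\in\tilde\Delta^+_\alpha$. The argument for $\tilde\Delta^-_\alpha$ is obtained by reversing time.
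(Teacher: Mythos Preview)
Your overall strategy---approximating $\tilde v$ by vectors $\tilde v_s\in\tilde J_\alpha(s)$ sharing its horizontal component, and then showing $\tilde v_s\to\tilde v$---is exactly the paper's approach. The paper writes the vertical defect as $b(t)^T\tilde E_\alpha(0)$ (your $a-a_s$) and reduces everything to the claim $\|\Psi(s)^{-1}\|\to 0$, equivalently $\inf_{|w|=1}|\Psi(s)w|\to\infty$; this is Lemma~\ref{Jacobiblowup}.

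The gap is in your proposed justification of that divergence. The two Riccati bounds you cite, $U^+_\alpha(s)\leq \mathcal W(s)\leq k\coth(ks)I$, do not by themselves force $|\Psi(s)w|\to\infty$. The upper bound only controls growth from above. The lower bound $\mathcal W(s)\geq U^+_\alpha(s)$ yields, via your differential identity, merely $\tfrac{d}{ds}\log|\Psi(s)w|^2\geq 2\lambda_{\min}(U^+_\alpha(s))$; since in general one can only say $U^+_\alpha(s)\geq -kI$, integration gives $|\Psi(s)w|\geq C e^{-ks}$, a bound that decays rather than diverges. The cocycle identity and ``integrating along the orbit'' do not rescue this: they only confirm that $U^+_\alpha(s)$ remains bounded, which is not enough.

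The paper's argument (Lemma~\ref{Jacobiblowup}) uses a sharper identity rather than a further inequality. Writing $D^+(t)=\Psi(t)M(t)$ with $M(t)=\int_t^\infty \Psi(\tau)^{-1}(\Psi(\tau)^{-1})^T\,d\tau$, one computes
\[
\mathcal W(t)-U^+_\alpha(t)=(\Psi(t)^{-1})^T\,M(t)^{-1}\,\Psi(t)^{-1}.
\]
Because the integral defining $M$ converges (this is precisely what the existence of $U^+$ encodes), its tail $\|M(t)\|\to 0$. Combining this with the uniform bound $0\leq\mathcal W(t)-U^+_\alpha(t)\leq Ck\,I$ (which \emph{does} follow from the bounds you wrote down) yields $\|\Psi(t)^{-1}\|^2\leq Ck\,\|M(t)\|\to 0$. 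So the missing ingredient is this explicit formula linking $\mathcal W-U^+$ to $\Psi^{-1}$ and the tail of a convergent integral, not a differential-inequality comparison.
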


\begin{proof}
We will only prove the statement for $\tilde\Delta^+$. The one for $\tilde\Delta^-$, being very similar, will be omitted. Let $\tilde v$ be a tangent vector in $\mathfrak V_\alpha$ such that $t\mapsto \widetilde{d\varphi}_t(\tilde v)$ is uniformly bounded for all $t>0$. Since the flow of $\vec H$ has no conjugate point, there is a vector $\tilde v_t$ in $J_\alpha(t)$ such that the horizontal components of $\tilde v$ and $\tilde v_t$ are the same. It follows that $\tilde v-\tilde v_t$ is vertical for each $t$.

Let $\tilde E(t)=(\tilde e^1_\alpha(t),...,\tilde e^{n-1}_\alpha(t))^T$ be canonical frame and let $\tilde F(t)=\dot{\tilde E}(t)$. Let $B(s)$ be the solution of (\ref{2nd}) with initial conditions $B(0)=0$ and $B'(0)=I$. Let $b(t)$ be a family of vectors in $\Real^n$ defined by $\tilde v-\tilde v_t=b(t)^T\tilde E(0)$. Then we have
\[
\tilde v-\tilde v_t=b(t)^TB'(s)^T\tilde E(s)-b(t)^TB(s)^T\tilde F(s).
\]

By assumption, there is a constant $K>0$ such that $|B(t)b(t)|\leq K$ for all $t>0$. By Lemma \ref{Jacobiblowup}, there is $T_n>0$ such that
\[
\frac{K}{|b(t)|}\geq\frac{|B(t)b(t)|}{|b(t)|}\geq n
\]
for all $t>T_n$.

Therefore, $\lim_{t\to\infty}b(t)=0$ and $\lim_{t\to\infty}\tilde v_t=\tilde v$. Since $\tilde v_t$ is contained in $\tilde J_\alpha(t)$ for all $t>0$, $\tilde v$ is contained in $\tilde \Delta^+_\alpha$ as claimed.
\end{proof}

\begin{lem}\label{stbdd}
Suppose that the assumptions of Theorem \ref{deltachar} are satisfied. Then for each $s_0>0$ (resp. $s_0<0$), there is a constant $C>0$ such that
\[
|\widetilde{d\varphi}_t(\tilde w)^h|\geq C |\widetilde{d\varphi}_s(\tilde w)^h|
\]
for all $\tilde w$ in $\tilde \Lambda$ and for all $t\geq s\geq s_0$ (resp. $t\leq s\leq s_0$).
\end{lem}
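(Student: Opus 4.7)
The plan is to argue by contradiction, using compactness of $\Sigma_c$, the no-conjugate-point hypothesis, and the Jacobi blowup lemma (Lemma~\ref{Jacobiblowup}). Suppose the claim fails for every $C>0$: then there are sequences $\alpha_n \in \Sigma_c$, $\tilde w_n \in \tilde\Lambda_{\alpha_n}$, and times $t_n \geq s_n \geq s_0$ with $|\widetilde{d\varphi}_{t_n}(\tilde w_n)^h| \leq (1/n)\,|\widetilde{d\varphi}_{s_n}(\tilde w_n)^h|$. Rescale $\tilde w_n$ so that $|\widetilde{d\varphi}_{s_n}(\tilde w_n)^h|=1$, write $\tilde w_n = b_n^T \tilde E_{\alpha_n}(0)$ in a canonical frame, and recall as in the proof of Lemma~\ref{bddDelta} that $|\widetilde{d\varphi}_s(\tilde w_n)^h| = |B^{(\alpha_n)}(s)\,b_n|$, where $B^{(\alpha)}(s)$ is the fundamental Jacobi matrix solving $\ddot B + \tilde{\mathcal R}_\alpha(s) B = 0$ with $B(0)=0$, $\dot B(0) = I$ (invertible for $s>0$ by no-conjugate-point). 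By compactness, extract $\alpha_n \to \alpha_*$ and $\beta_n := \varphi_{s_n}(\alpha_n) \to \beta_*$ in $\Sigma_c$.

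First I would handle the cases where $s_n$ is bounded. If $t_n$ is also bounded, uniform invertibility of $B^{(\alpha)}(\cdot)$ on compact $(\alpha,s)$-domains (a consequence of no-conjugate-point and compactness) forces $b_n \to b_* \neq 0$, whence continuity gives $|B^{(\alpha_*)}(t_*)\,b_*|=0$ at a time $t_* \geq s_0 > 0$, contradicting no-conjugate-point at $\alpha_*$. If instead $t_n \to \infty$, the same boundedness of $b_n$ combined with the uniform Jacobi blowup forces $|B^{(\alpha_n)}(t_n)\,b_n|\to\infty$, against the assumed decay to $0$.

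The main case is $s_n \to \infty$. I would transfer the problem to $\beta_n$ by setting $\tilde u_n := \widetilde{d\varphi}_{s_n}(\tilde w_n) \in \mathfrak V_{\beta_n}$; by Proposition~\ref{curves}, $\tilde u_n \in \tilde J_{\beta_n}(-s_n)$, $|\tilde u_n^h|=1$, and $|\widetilde{d\varphi}_{r_n}(\tilde u_n)^h| \leq 1/n$ with $r_n := t_n - s_n \geq 0$. Split on whether $|\tilde u_n|$ is bounded. If $|\tilde u_n|\to\infty$, rescale to $\hat u_n := \tilde u_n/|\tilde u_n|$; since $|\hat u_n^h|=1/|\tilde u_n|\to 0$, a convergent subsequence has limit $\hat u_* \in \tilde\Lambda_{\beta_*}$ of unit norm. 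Continuity of the flow at each fixed finite $\tau$, together with the behaviour at $\tau=0$ and $\tau=r_*$ (the limit of $r_n$, where finite), will produce $\hat u_* \in \tilde J_{\beta_*}(0) \cap \tilde J_{\beta_*}(r_*)$ for some $r_*>0$, forcing $\hat u_*=0$ by no-conjugate-point at $\beta_*$ and contradicting $|\hat u_*|=1$. The boundary sub-cases $r_* = 0$ and $r_n \to \infty$ are ruled out by, respectively, a mean-value estimate on $\tau \mapsto |\widetilde{d\varphi}_\tau(\tilde u_n)^h|$ (whose derivative is controlled via $\sup\|\tilde{\mathcal R}\|$ on compact $\Sigma_c$) and a uniform Jacobi blowup estimate applied to the near-vertical $\hat u_n$.

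The hard part will be the remaining branch $|\tilde u_n|$ bounded, where $\tilde u_n \to \tilde u_* \neq 0$ with $|\tilde u_*^h|=1$. The same continuity argument yields $\tilde u_* \in \tilde J_{\beta_*}(r_*)$ for some $r_* > 0$ (again ruling out $r_* = 0$ by a mean-value estimate), so $\tilde w_{**} := \widetilde{d\varphi}_{r_*}(\tilde u_*)$ is a nonzero vertical vector at $\varphi_{r_*}(\beta_*)$; no-conjugate-point alone is not enough to contradict this. Here one must exploit the present hypothesis (no vector has bounded horizontal for all $t$) via Lemma~\ref{bddDelta}, together with the fact that $\tilde u_n \in \tilde J_{\beta_n}(-s_n)$ with $s_n \to \infty$, to force $\tilde u_*$ to have uniformly bounded horizontal component in the backward direction and hence lie in $\tilde\Delta^-_{\beta_*}$, contradicting the dichotomy that every nonzero vector in $\mathfrak V$ has unbounded horizontal in at least one time direction.
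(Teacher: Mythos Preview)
Your overall strategy---contradiction, compactness, case analysis, and appeal to Lemma~\ref{bddDelta}---is in the right spirit, but there is a genuine gap in the main case $s_n\to\infty$. You normalize so that $|\tilde u_n^h|=1$ at the shifted origin; this gives you information only at the three times $-s_n$, $0$, and $r_n$, but none at intermediate times. Concretely, to invoke Lemma~\ref{bddDelta} and conclude $\tilde u_*\in\tilde\Delta^-$, you would need $|\widetilde{d\varphi}_t(\tilde u_*)^h|$ bounded for all $t\le 0$, which in turn requires a uniform bound on $|\widetilde{d\varphi}_t(\tilde u_n)^h|$ for $t\in[-s_n,0]$; nothing in your setup provides this. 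The alternative route---arguing that $\tilde u_n\in\tilde J_{\beta_n}(-s_n)$ forces $\tilde u_*\in\tilde\Delta^-_{\beta_*}$ directly---would need $\tilde J_\alpha(s)\to\tilde\Delta^-_\alpha$ uniformly in $\alpha$, but $\tilde\Delta^-$ is only known to be measurable. You also omit the subcase $r_n\to\infty$ with $|\tilde u_n|$ bounded, and your mean-value estimate for $r_n\to 0$ in the $|\tilde u_n|\to\infty$ branch fails because $\frac{d}{d\tau}|\widetilde{d\varphi}_\tau(\tilde u_n)^h|^2=-2\langle Q_n(\tau),P_n(\tau)\rangle$ involves the vertical component $P_n$, which is unbounded there.

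The paper avoids all of this with one device: instead of centering at $s_n$, it centers at the time $u_n\in[0,t_n]$ where $|\widetilde{d\varphi}_t(\tilde w_n)^h|$ attains its maximum, and normalizes by that maximum to form $\tilde v_n$. This single choice yields $|\widetilde{d\varphi}_t(\tilde v_n)^h|\le 1$ for every $t\in[-u_n,\,t_n-u_n]$, so after passing to a limit the horizontal part of the orbit of $\tilde v$ is bounded on the entire limiting interval. The remaining trichotomy on whether $u_n$ and $t_n-u_n$ diverge then contradicts, respectively, the no-conjugate-point hypothesis, the no-bounded-horizontal hypothesis, or (via Lemma~\ref{bddDelta}) part~(3) of Theorem~\ref{Green}. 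If you want to salvage your argument, the cleanest fix is to adopt this normalization at the maximum.
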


\begin{proof}
Suppose that the conclusion does not hold. Then there are vectors $\tilde w_n$ in $\tilde\Lambda$ and numbers $t_n\geq s_n\geq s_0$ such that
\[
|\widetilde{d\varphi}_{t_n}(\tilde w_n)^h|< \frac{1}{n} |\widetilde{d\varphi}_{s_n}(\tilde w_n)^h|.
\]

By multiplying $\tilde w_n$ by a constant, we can assume that $|\tilde w_n|=1$. By compactness, we can assume that $\tilde w_n$ converges to $\tilde w$ in $\tilde \Lambda$. Let $u_n$ be the number which achieves the maximum of $|\widetilde{d\varphi}_t(\tilde w_n)^h|$ over $t$ in $[0,t_n]$. It follows that
\[
|\widetilde{d\varphi}_{u_n}(\tilde w_n)^h| \geq |\widetilde{d\varphi}_{s_0}(\tilde w_n)^h|
\]
is bounded below by a positive constant uniformly in $n$ since $\tilde w_n$ is convergent. Therefore, $u_n$ is also bounded below by a positive constant uniformly in $n$.

Let $\tilde v_n=\frac{\widetilde{d\varphi}_{u_n}(\tilde w_n)}{|(\widetilde{d\varphi}_{u_n}(\tilde w_n))^h|}$ and let $a_n$ be vectors defined by
\[
\tilde w_n=a_n^T(\dot B(t)^TE(t)-B(t)^TF(t))
\]
where $B$ is a solution of (\ref{2nd}) with initial conditions $B(0)=0$ and $\dot B(0)=I$.

Let $S(t)=B(t)^{-1}\dot B(t)$. Then $S(t)$ satisfies (\ref{S}). By Lemma \ref{Riccatibdd}, it follows that $\tilde v_n$ satisfies
\[
\begin{split}
|\tilde v_n|&\leq 1+\frac{|\widetilde{d\varphi}_{u_n}(\tilde w_n)^v|}{|\widetilde{d\varphi}_{u_n}(\tilde w_n)^h|}\\
&= 1+\frac{|\dot B(u_n)a_n|}{|B(u_n)a_n|}\\
&\leq 1+k\coth(ku_n).
\end{split}
\]

Since $u_n$ is bounded uniformly from below by a positive constant, $|\tilde v_n|$ is also bounded uniformly and we can assume that $\tilde v_n$ converges to a vector $\tilde v$. By the definition of $u_n$, we have
\begin{equation}\label{bddJacobi}
|\widetilde{d\varphi}_t(\tilde v_n)^h|=\frac{\left|\widetilde{d\varphi}_{t+u_n}(\tilde w_n)^h\right|}{\left|\widetilde{d\varphi}_{u_n}(\tilde w_n)^h\right|}\leq 1
\end{equation}
for $-u_n\leq t\leq t_n-u_n$. By assumption, $\widetilde{d\varphi}_{-u_n}(\tilde v_n)$ is contained in $\tilde\Lambda$ and
\[
|\widetilde{d\varphi}_{t_n-u_n}(\tilde v_n)^h|= \frac{\left|\widetilde{d\varphi}_{t_n}(\tilde w_n)^h\right|}{\left|\widetilde{d\varphi}_{u_n}(\tilde w_n)^h\right|}\leq \frac{\left|\widetilde{d\varphi}_{t_n}(\tilde w_n)^h\right|}{\left|\widetilde{d\varphi}_{s_n}(\tilde w_n)^h\right|}< \frac{1}{n}.
\]

If both $u_n$ and $t_n-u_n$ have convergent subsequence, then it violates the assumption that there is no vector $\tilde w$ in $\mathfrak V$ such that $\left|\widetilde{d\varphi}_t(\tilde w_n)^h\right|$ is bounded for all $t$. If both $-u_n\to -\infty$ and $t_n-u_n\to +\infty$. Then this violates the assumption that there is no bounded reduced non-zero Jacobi field. If one of $-u_n$ or $t_n-u_n$ has a convergent subsequence, then one of $\widetilde{d\varphi}_{-u_n}(\tilde v_n)$ or $\widetilde{d\varphi}_{t_n-u_n}(\tilde v_n)$ converges to a vector in $\tilde\Lambda$. This vector is also contained in either $\tilde \Delta^+$ or $\tilde \Delta^-$ by Lemma \ref{bddDelta} and (\ref{bddJacobi}). This violates (3) of Theorem \ref{Green}.
\end{proof}

\begin{proof}[Proof of Theorem \ref{deltachar}]
One inclusion follows from Lemma \ref{bddDelta}. For the other inclusion, let $\tilde w$ be in $\tilde \Delta^+_\alpha$. Let $\tilde w_\tau$ be the vector in $\tilde J_\alpha(\tau)$ such that $\tilde w^h=\tilde w_\tau^h$. By the definition of $\tilde\Delta^+$, we have $\lim_{\tau\to\infty}\tilde w_\tau=w$. Fix $s_0<0$. By Lemma \ref{stbdd}, there is a constant $C>0$ such that
\[
|\widetilde{d\varphi}_t(\tilde u)^h|\geq C|\widetilde{d\varphi}_s(\tilde u)^h|
\]
for all $t\leq s\leq s_0$ and for all $\tilde u$ in $\tilde\Lambda$.

Let $\tilde u=\widetilde{d\varphi}_{\tau}(\tilde w_\tau)$, $t=-\tau$, and $s=-\tau+\epsilon$. Then we obtain
\[
|\tilde w_\tau^h|\geq C|\widetilde{d\varphi}_{\epsilon}(\tilde w_\tau)^h|.
\]

By letting $\tau$ goes to $+\infty$, we obtain
\begin{equation}\label{Jacobibdd}
|\tilde w^h|\geq C|\widetilde{d\varphi}_{\epsilon}(\tilde w)^h|.
\end{equation}

Therefore, $|\widetilde{d\varphi}_{\epsilon}(\tilde w)^h|<+\infty$ for all $\epsilon\geq 0$.
\end{proof}

\smallskip

\section{Monotone Anosov Hamiltonian flows without conjugate point}\label{MonotoneAnosov}

In this section, we give various equivalent conditions which guarantee that a monotone Hamiltonian vector field without conjugate point is Anosov. More precisely, we will prove the following.

\begin{thm}\label{main1}
Let $\vec H$ be a monotone Hamiltonian vector field without conjugate point. Assume that $\Sigma_c=H^{-1}(c)$ is compact. Then the followings are equivalent.
\begin{enumerate}
\item $\tilde \Delta^+\cap\tilde\Delta^-=\{0\}$,
\item $\tilde\Lambda=\tilde\Delta^+\oplus\tilde\Delta^-$,
\item there is no vector $\tilde w$ in $\mathfrak V$ such that $|\widetilde{d\varphi}_t(\tilde w)^h|$ is bounded uniformly in $t$,
\item there are constants $c_1, c_2>0$ such that
\[
|\widetilde{d\varphi}_{\pm t}(\tilde w)|\leq c_1|\tilde w|e^{-c_2 t}
\]
for all $t\geq 0$ and $\tilde w$ in $\tilde\Delta^\pm$.
\end{enumerate}
\end{thm}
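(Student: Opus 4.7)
The plan is to prove the cycle $(4) \Rightarrow (1) \Rightarrow (3) \Rightarrow (4)$, handling the equivalence $(1) \Leftrightarrow (2)$ by a separate dimension count. Since $\tilde\Delta^+$, $\tilde\Delta^-$, and $\tilde\Lambda$ are each $(n-1)$-dimensional Lagrangian subspaces of the $(2n-2)$-dimensional reduced symplectic space $\mathfrak V$, transversality $\tilde\Delta^+ \cap \tilde\Delta^- = \{0\}$ automatically upgrades to the direct sum decomposition of $\mathfrak V$, giving $(1) \Leftrightarrow (2)$.

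The implication $(4) \Rightarrow (1)$ is immediate from invariance of $\tilde\Delta^\pm$: if $\tilde w \in \tilde\Delta^+ \cap \tilde\Delta^-$, then
\[
|\tilde w| = |\widetilde{d\varphi}_t \widetilde{d\varphi}_{-t}(\tilde w)| \leq c_1^2 e^{-2c_2 t}|\tilde w|
\]
for all $t \geq 0$, forcing $\tilde w = 0$. For $(1) \Rightarrow (3)$, compactness of $\Sigma_c$ and smoothness of $\tilde{\mathfrak R}$ produce a uniform lower bound $\tilde{\mathfrak R} \geq -k^2 I$, so Lemma~\ref{bddDelta} applies. If some nonzero $\tilde w$ had $|\widetilde{d\varphi}_t(\tilde w)^h|$ bounded for all $t \in \Real$, two applications of Lemma~\ref{bddDelta}, once for forward and once for backward time, would place $\tilde w \in \tilde\Delta^+ \cap \tilde\Delta^- = \{0\}$, a contradiction.

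The main step is $(3) \Rightarrow (4)$. Under (3), Theorem~\ref{deltachar} characterises $\tilde\Delta^{\pm}$ as those vectors whose forward (resp.\ backward) horizontal component is uniformly bounded, and Lemma~\ref{stbdd} supplies two-sided quantitative control on horizontal norms of $\tilde\Lambda$-orbits. I would fix $T > 0$ and set
\[
\lambda := \sup\bigl\{|\widetilde{d\varphi}_T(\tilde w)|/|\tilde w| : \alpha \in \Sigma_c,\ 0 \neq \tilde w \in \tilde\Delta^+_\alpha\bigr\}.
\]
The cocycle identity combined with uniform boundedness of $\widetilde{d\varphi}_s$ for $s \in [0,T]$ reduces the exponential estimate in (4) for $\tilde\Delta^+$ to proving $\lambda < 1$. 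To get $\lambda < 1$, I would argue by contradiction: extract a sequence of unit vectors $\tilde w_n \in \tilde\Delta^+_{\alpha_n}$ with $|\widetilde{d\varphi}_T(\tilde w_n)| \to 1$, pass to a limit $(\alpha_\infty, \tilde w_\infty)$ using compactness of $\Sigma_c$, and combine Theorem~\ref{deltachar} (forward bound on the horizontal component) with Lemma~\ref{stbdd} used in the reverse direction (lower bound backwards) to conclude that the orbit of $\tilde w_\infty$ has horizontal component uniformly bounded for all $t \in \Real$, contradicting~(3). The argument for $\tilde\Delta^-$ is the time-reversed mirror.

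The hardest aspect is precisely this last step: bridging the qualitative hypothesis (3) and the quantitative exponential rate in (4). Compactness of $\Sigma_c$ is essential. The technical care lies in the limit extraction, since $\tilde\Delta^+$ is a priori only measurable; one should pass to the limit through the Riccati parametrisation $\tilde w = b^T\bigl(\tilde F_\alpha(0) - U_\alpha^+(0)\tilde E_\alpha(0)\bigr)$ rather than through any continuity of the bundle, and then verify that the limiting vector genuinely witnesses a bounded orbit, so that the contradiction with (3) is actually achieved.
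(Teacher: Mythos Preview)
Your handling of $(1)\Leftrightarrow(2)$, $(4)\Rightarrow(1)$, and $(1)\Rightarrow(3)$ is correct and matches the paper's reasoning (the paper phrases the last equivalence as $(1)\Rightarrow(3)$ via Lemma~\ref{bddDelta} and $(3)\Rightarrow(1)$ via Theorem~\ref{deltachar}, but this is the same content).

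The gap is in your argument for $(3)\Rightarrow(4)$. You fix a single $T>0$ and try to show the contraction ratio $\lambda$ at that $T$ is strictly less than $1$. But for small $T$ there is no reason whatsoever that $\lambda<1$; the reduced flow is close to the identity. So the contradiction hypothesis ``$\lambda\geq 1$'' can simply be true for the $T$ you chose, and no contradiction follows. More seriously, even granting a good choice of $T$, your extraction does not produce a vector with two-sided bounded orbit: if $\tilde w_n\to\tilde w_\infty$ with $\tilde w_\infty\in\tilde\Delta^+$, Theorem~\ref{deltachar} bounds the \emph{forward} horizontal part, but nothing in your sketch bounds the \emph{backward} orbit. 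Your appeal to Lemma~\ref{stbdd} ``in the reverse direction'' does not do this, since that lemma applies to vectors in $\tilde\Lambda$, not to $\tilde w_\infty\in\tilde\Delta^+$, and you have not explained how to transfer it.

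The paper closes this gap differently. It first proves a decay statement (Lemma~\ref{decay}): under $(3)$, the quantity $\phi^+(t)=\sup_{|\tilde w|=1,\ \tilde w\in\tilde\Delta^+}|\widetilde{d\varphi}_t(\tilde w)|$ tends to $0$ as $t\to+\infty$. The key trick is to look not at the sequence $\tilde w_n$ but at the \emph{images} $\widetilde{d\varphi}_{t_n}(\tilde w_n)\in\tilde\Delta^+$, which are bounded by~(\ref{Jacobibdd}) and bounded away from $0$ by hypothesis; a subsequential limit $\tilde w$ then has $|\widetilde{d\varphi}_t(\tilde w)|$ bounded for all $t$ because $t_n\to\infty$ supplies arbitrarily long backward history, contradicting $(3)$. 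Once $\phi^+(t)\to 0$ is known, submultiplicativity $\phi^+(t+s)\leq\phi^+(t)\phi^+(s)$ together with boundedness turns qualitative decay into exponential decay (this is Lemma~\ref{anosovaction}, quoting \cite[Lemma 3.12]{Eb}). So the correct route is decay first, then contraction for \emph{some} large $T$, not contraction at a prescribed $T$.
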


\begin{lem}\label{decay}
Under the assumptions of Theorem \ref{deltachar},
\[
\lim_{t\to\pm\infty}\sup_{|\tilde w|=1,\tilde w\in\tilde\Delta^\pm}|\widetilde{d\varphi}_t(\tilde w)|=0.
\]
\end{lem}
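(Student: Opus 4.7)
The plan is to argue by contradiction using Lemma \ref{bddDelta} together with the uniform horizontal contraction that is implicit in the proof of Theorem \ref{deltachar}. I treat the case $t\to+\infty$; the case $t\to-\infty$ is entirely symmetric.

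First I would extract from the proof of Theorem \ref{deltachar} the uniform estimate
\[
|\widetilde{d\varphi}_t(\tilde w)^h|\leq C^{-1}|\tilde w^h|
\]
valid for every $\tilde w\in\tilde\Delta^+$, every $\alpha\in\Sigma_c$, and every $t\geq 0$, where $C>0$ is the constant produced by Lemma \ref{stbdd} for a fixed $s_0<0$ (independence of the base point and of $\tilde w$ is precisely what Lemma \ref{stbdd} asserts). Using this estimate together with Lemma \ref{bddDelta}, I would next show that $\tilde\Delta^+$ is \emph{closed} as a subset of $\mathfrak V|_{\Sigma_c}$: if $\tilde w_n\in\tilde\Delta^+_{\alpha_n}$ converges to $\tilde w\in\mathfrak V_\alpha$, the bound above passes to the limit for each fixed $t\geq 0$, whereupon Lemma \ref{bddDelta} forces $\tilde w\in\tilde\Delta^+_\alpha$. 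Combined with the pointwise transversality $\tilde\Delta^+\cap\tilde\Lambda=\{0\}$ (inherited from Theorem \ref{Green}(3) after reduction by $\Real\vec H$), the constant Lagrangian rank $n-1$, and compactness of $\Sigma_c$, this closedness upgrades to \emph{uniform} transversality: there exists $M>0$ with $|\tilde u|\leq M|\tilde u^h|$ for every $\tilde u\in\tilde\Delta^+|_{\Sigma_c}$.

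Suppose now that the conclusion fails in forward time. Then there exist $\epsilon>0$, $t_n\to+\infty$, and unit $\tilde w_n\in\tilde\Delta^+_{\alpha_n}$ with $|\tilde v_n|\geq\epsilon$, where $\tilde v_n:=\widetilde{d\varphi}_{t_n}(\tilde w_n)\in\tilde\Delta^+_{\beta_n}$ and $\beta_n:=\varphi_{t_n}(\alpha_n)$. The two bounds above force $\epsilon\leq|\tilde v_n|\leq MC^{-1}$, so by compactness I extract a subsequence with $\beta_n\to\beta$ and $\tilde u_n:=\tilde v_n/|\tilde v_n|\to\tilde v\in\mathfrak V_\beta$, $|\tilde v|=1$; closedness of $\tilde\Delta^+$ gives $\tilde v\in\tilde\Delta^+_\beta$. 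For every fixed $s\geq 0$ and $n$ large enough that $t_n-s\geq 0$, the vector $\widetilde{d\varphi}_{t_n-s}(\tilde w_n)$ lies in $\tilde\Delta^+$ and therefore
\[
|\widetilde{d\varphi}_{-s}(\tilde u_n)|=\frac{|\widetilde{d\varphi}_{t_n-s}(\tilde w_n)|}{|\tilde v_n|}\leq \frac{M\,|\widetilde{d\varphi}_{t_n-s}(\tilde w_n)^h|}{\epsilon}\leq \frac{MC^{-1}}{\epsilon}.
\]
Letting $n\to\infty$ yields $|\widetilde{d\varphi}_{-s}(\tilde v)|\leq MC^{-1}/\epsilon$ for all $s\geq 0$; in particular $|\widetilde{d\varphi}_{-s}(\tilde v)^h|$ is uniformly bounded in $s\geq 0$, and Lemma \ref{bddDelta} places $\tilde v$ in $\tilde\Delta^-_\beta$ as well. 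Hence $|\widetilde{d\varphi}_t(\tilde v)^h|$ is bounded for all $t\in\Real$, contradicting the standing hypothesis of Theorem \ref{deltachar} since $\tilde v\neq 0$.

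The main obstacle is the closedness/uniform-transversality step. The distribution $\tilde\Delta^+$ comes out of Theorem \ref{Green} only as a pointwise limit and a priori only measurable, yet the whole contradiction hinges on a single base-point-independent constant $M$ controlling $|\tilde u|$ by $|\tilde u^h|$ on $\tilde\Delta^+|_{\Sigma_c}$. Leveraging the uniform horizontal contraction from Lemma \ref{stbdd} to promote $\tilde\Delta^+$ to a closed, and hence continuous, sub-bundle of constant rank is what makes the subsequent compactness and invariance manipulations go through.
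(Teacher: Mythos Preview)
Your proof is correct and follows essentially the same contradiction strategy as the paper. The paper's version is terser: it simply observes that $\widetilde{d\varphi}_{t_n}(\tilde w_n)\in\tilde\Delta^+$ forces uniform boundedness ``by compactness and Theorem \ref{deltachar}'', extracts a nonzero limit $\tilde w$, and then notes that $|\widetilde{d\varphi}_{t+t_n}(\tilde w_n)|$ is bounded for all $t\geq -t_n$ by (\ref{Jacobibdd}), so in the limit $|\widetilde{d\varphi}_t(\tilde w)|$ is bounded for all $t\in\Real$, contradicting the hypothesis directly.

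The one substantive difference is how you obtain uniform transversality $|\tilde u|\leq M|\tilde u^h|$ on $\tilde\Delta^+$. You go through closedness of $\tilde\Delta^+$ as a subset of $\mathfrak V|_{\Sigma_c}$, which is a nice self-contained argument. The paper leaves this implicit, but a shorter route is available from what is already proved: since $\Sigma_c$ is compact, $\tilde{\mathfrak R}\geq -k^2I$ for some $k$, so Lemma \ref{Riccatibdd} gives $|U^+_\alpha(0)|\leq k$ uniformly; as $\tilde\Delta^+_\alpha$ is spanned by $\tilde F_\alpha(0)-U^+_\alpha(0)\tilde E_\alpha(0)$, one gets $M=\sqrt{1+k^2}$ directly. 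Your final placement of $\tilde v$ in $\tilde\Delta^+\cap\tilde\Delta^-$ via Lemma \ref{bddDelta} and the paper's direct all-time boundedness are equivalent ways to close the contradiction.
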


\begin{proof}
Suppose the statement for $\tilde\Delta^+$ does not hold. Then there is $\epsilon>0$, a sequence $t_n>0$ going to $\infty$, and a sequence $\tilde w_n$ in $\tilde \Delta^+$ satisfying $|\tilde w_n|=1$ such that
\[
|\widetilde{d\varphi}_{t_n}(\tilde w_n)|>\epsilon.
\]

Since $\widetilde{d\varphi}_{t_n}(\tilde w_n)$ is contained in $\tilde\Delta^+$, $|\widetilde{d\varphi}_{t_n}(\tilde w_n)|$ is uniformly bounded in $n$ by compactness and Theorem \ref{deltachar}. Therefore, $\widetilde{d\varphi}_{t_n}(\tilde w_n)$ converges to $\tilde w\neq 0$. Since $\widetilde{d\varphi}_{t_n}(\tilde w_n)$ is contained in $\tilde\Delta^+$, $|\widetilde{d\varphi}_{t+t_n}(\tilde w_n)|$ is uniformly bounded for all $n$ and $t\geq -t_n$ by (\ref{Jacobibdd}). Hence, by letting $n\to\infty$,  $|\widetilde{d\varphi}_t(\tilde w)|$ is uniformly bounded in $t$. This contradicts the assumption of the lemma.
\end{proof}

\begin{lem}\label{anosovaction}
Let $\vec H$ be monotone and without conjugate point. Let $c$ be a regular value of $H$ and assume that $\Sigma_c=H^{-1}(c)$ is compact. Then there is no vector $\tilde w$ in $\mathfrak V$ such that $|\widetilde{d\varphi}_t(\tilde w)^h|$ is bounded for all $t$ if and only if there are constants $c_1, c_2>0$ such that
\begin{equation}\label{anosovproperty}
|\widetilde{d\varphi}_{\pm t}(\tilde w)|\leq c_1|\tilde w|e^{-c_2 t}
\end{equation}
for all $t\geq 0$ and $\tilde w$ in $\tilde\Delta^\pm$.
\end{lem}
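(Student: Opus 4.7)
The plan is to prove the biconditional by establishing the two implications separately, with the reverse direction being essentially a compactness-invariance bootstrap.

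For the easier direction $(\Leftarrow)$, I assume the exponential estimates (\ref{anosovproperty}) hold and suppose, for contradiction, that some nonzero $\tilde w \in \mathfrak V$ has $|\widetilde{d\varphi}_t(\tilde w)^h|$ bounded for all $t$. Compactness of $\Sigma_c$ and continuity of $\tilde{\mathfrak R}$ give a uniform lower bound $\tilde{\mathfrak R} \geq -k^2 I$, so Lemma \ref{bddDelta} applies in both directions of time and forces $\tilde w \in \tilde\Delta^+ \cap \tilde\Delta^-$. Membership in $\tilde\Delta^-$ together with (\ref{anosovproperty}) yields $|\widetilde{d\varphi}_{-t}(\tilde w)| \leq c_1|\tilde w|e^{-c_2 t}$ for $t \geq 0$, while membership in $\tilde\Delta^+$, combined with invariance of $\tilde\Delta^+$ and with (\ref{anosovproperty}) applied at the point $\widetilde{d\varphi}_{-t}(\tilde w) \in \tilde\Delta^+$, yields $|\widetilde{d\varphi}_{-t}(\tilde w)| \geq c_1^{-1}|\tilde w|e^{c_2 t}$. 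These two estimates are incompatible for large $t$ unless $\tilde w = 0$.

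For the main direction $(\Rightarrow)$, I invoke Lemma \ref{decay}, whose hypotheses coincide with the assumptions currently in force, to obtain the uniform decay
\[
\sup\bigl\{|\widetilde{d\varphi}_t(\tilde w)| : \tilde w \in \tilde\Delta^+,\; |\tilde w|=1\bigr\} \longrightarrow 0 \quad\text{as } t \to +\infty.
\]
Then I pick $T>0$ so that this supremum is at most $1/2$. Using invariance $\widetilde{d\varphi}_T(\tilde\Delta^+) = \tilde\Delta^+$ and applying the bound to normalized iterates, a straightforward induction gives $|\widetilde{d\varphi}_{nT}(\tilde w)| \leq 2^{-n}|\tilde w|$ for every $\tilde w \in \tilde\Delta^+$ and every nonnegative integer $n$. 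Compactness of $\Sigma_c$ together with smoothness of the flow supplies a constant $C$ with $\|\widetilde{d\varphi}_s|_{\tilde\Delta^+}\| \leq C$ for $s \in [0,T]$. Writing an arbitrary $t \geq 0$ as $t = nT + s$ with $0 \leq s < T$ and combining the two bounds converts the geometric decay at integer multiples of $T$ into $|\widetilde{d\varphi}_t(\tilde w)| \leq 2C\, e^{-(\log 2/T)\,t}\,|\tilde w|$, which is (\ref{anosovproperty}) with $c_1 = 2C$ and $c_2 = (\log 2)/T$. The same scheme run in reverse time handles $\tilde\Delta^-$.

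The main obstacle is securing Lemma \ref{decay}'s \emph{uniform} decay over the unit sphere of $\tilde\Delta^\pm$: pointwise decay of each individual orbit would not permit the extraction of a single $T$ that works for every unit vector at once, and without such a uniform $T$ the invariance-based iteration cannot produce a uniform exponential rate. Once this uniformity is in hand, the remainder of the argument is a standard compactness-and-invariance bootstrap.
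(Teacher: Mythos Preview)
Your argument is correct and follows essentially the same route as the paper. For $(\Leftarrow)$ the paper simply says ``clearly'' while you spell out the contradiction via Lemma~\ref{bddDelta}; for $(\Rightarrow)$ the paper defines $\phi^+(t)=\sup_{|\tilde w|=1,\,\tilde w\in\tilde\Delta^+}|\widetilde{d\varphi}_t(\tilde w)|$, records that it is bounded, submultiplicative, and tends to zero by Lemma~\ref{decay}, and then cites \cite[Lemma 3.12]{Eb}, whereas you carry out that standard submultiplicativity-to-exponential-decay argument explicitly.
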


\begin{proof}
Clearly, (\ref{anosovproperty}) implies that $|\widetilde{d\varphi}_t(\tilde w)^h|$ is not bounded for all $t$. Conversely, let
\[
\phi^+(t)=\sup_{|\tilde w|=1,\tilde w\in\tilde\Delta^+}|\widetilde{d\varphi}_t(\tilde w)|.
\]
Then $\phi^+$ is uniformly bounded for all $t\geq 0$ (see (\ref{Jacobibdd})), $\phi^+(t+s)\leq\phi^+(s)\phi^+(t)$ for all $s, t\geq 0$, and $\lim_{s\to\infty}\phi^+(s)=0$ (Lemma \ref{decay}). The rest follows from \cite[Lemma 3.12]{Eb}.
\end{proof}

\begin{proof}[Proof of Theorem \ref{main1}]
By a count in dimensions, (1) and (2) are equivalent. By Lemma \ref{bddDelta}, (1) implies (3).  By Theorem \ref{deltachar}, (3) implies (1). (3) and (4) are equivalent by Lemma \ref{anosovaction}.
\end{proof}

\begin{proof}[Proof of Theorem \ref{main2}]
By Theorem \ref{main1}, it is enough to show that (3) of Theorem \ref{main1} is equivalent to (1) of Theorem \ref{main2}. This, in turn, follows from  \cite[Proposition 5.1]{Wo}.
\end{proof}

\smallskip

\section{The case with non-positive reduced curvature}\label{The}

In this section, we give the proof of Theorem \ref{nonpositive}. Under the assumption that the reduced curvature of $\vec H$ is non-positive, the following is a characterization of when the flow of $\vec H$ is Anosov.

\begin{lem}\label{anosovJo}
Let $c$ be a regular value of $H$ and assume that $\Sigma_c=H^{-1}(c)$ is compact. Assume that, for each  $\tilde v$ in $\tilde\Lambda_\alpha$ with $\alpha$ in $\Sigma_c$, $|\widetilde{d\varphi}_t(\tilde v)^h|$ is increasing for each $t>0$ and decreasing for each $t<0$. Then the followings are equivalent.
\begin{enumerate}
\item the Hamiltonian flow is Anosov on $\Sigma_c$,
\item $\bigcap_{t\in\Real}\tilde J^o_\alpha(t)=\emptyset$ for each $\alpha$ in $\Sigma_c$.
\end{enumerate}
In particular, the above conditions are equivalent if the reduced curvature of the Hamiltonian is non-positive.
\end{lem}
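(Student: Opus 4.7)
The plan is to deduce the lemma from Theorem~\ref{main1}. The monotonicity hypothesis forces $\vec H$ to have no conjugate point on $\Sigma_c$: for any vertical $\tilde v$, $|\widetilde{d\varphi}_t(\tilde v)^h|$ vanishes at $t=0$ and then is monotone, so it is nonzero for $t\neq 0$. Hence Theorem~\ref{main1} applies and reduces the Anosov property to its condition~(3): no nonzero $\tilde w\in\mathfrak V$ has $|\widetilde{d\varphi}_t(\tilde w)^h|$ uniformly bounded on $\Real$. I would therefore prove that this condition is equivalent to~(2).

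For the easy direction, Theorem~\ref{main1}(3)$\Rightarrow$(2), take $\tilde w\in\bigcap_t\tilde J^o_\alpha(t)$ and expand it in the canonical frame at $\alpha$ as $\tilde w=A(t)^T\tilde E_\alpha(t)+B(t)^T\tilde F_\alpha(t)$. Since $\tilde J^o_\alpha(t)=\textbf{span}\,\tilde F_\alpha(t)$, one has $A(t)\equiv 0$; differentiating in $t$ and using the structural equations $\dot{\tilde E}=\tilde F$, $\dot{\tilde F}=-\tilde R_\alpha(t)\tilde E$ yields $\dot A=\tilde R_\alpha(t)B$ and $\dot B=-A$, so $B\equiv B(0)$ is constant. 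Hence $|\widetilde{d\varphi}_t(\tilde w)^h|=|B(0)|$ is bounded, and Theorem~\ref{main1}(3) forces $\tilde w=0$.

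The substantive direction is $(2)\Rightarrow$ Theorem~\ref{main1}(3). Suppose $\tilde w\neq 0$ has $|\widetilde{d\varphi}_t(\tilde w)^h|$ bounded on all of $\Real$. Compactness of $\Sigma_c$ produces $k>0$ with $\tilde{\mathfrak R}\ge -k^2 I$, so Lemma~\ref{bddDelta} applied in both time directions places $\tilde w$ in $\tilde\Delta^+_\alpha\cap\tilde\Delta^-_\alpha$. Following the construction in the proof of Theorem~\ref{Green}, each invariant bundle has the form $\tilde\Delta^\pm_\alpha=\textbf{span}\{\tilde F_\alpha(0)-U^\pm_\alpha(0)\tilde E_\alpha(0)\}$ for symmetric operators $U^\pm_\alpha(0)$. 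Equating the two expressions for $\tilde w$ produces $U^+_\alpha(0)b=U^-_\alpha(0)b$ for its horizontal coefficient $b$. Granting the sign claim $U^+_\alpha(0)\le 0\le U^-_\alpha(0)$ (see below), the common scalar $b^TU^+_\alpha(0)b=b^TU^-_\alpha(0)b$ is simultaneously nonpositive and nonnegative, hence zero, and symmetric semidefiniteness of each $U^\pm_\alpha(0)$ forces $U^\pm_\alpha(0)b=0$. Therefore $\tilde w=b^T\tilde F_\alpha(0)\in\tilde J^o_\alpha(0)$. The same argument applied at each $\varphi_t(\alpha)$, using that $\tilde\Delta^\pm$ are flow-invariant, gives $\widetilde{d\varphi}_t(\tilde w)\in\tilde J^o_{\varphi_t(\alpha)}(0)$, i.e.\ $\tilde w\in\tilde J^o_\alpha(t)$ for every $t\in\Real$, contradicting (2).

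The main obstacle is the sign claim on $U^\pm_\alpha(0)$, and it is exactly where the monotonicity hypothesis is used. I would extract it from a matrix Riccati computation: let $\mathbf M(t)$ be the matrix Jacobi solution with $\mathbf M(0)=0$, $\dot{\mathbf M}(0)=I$; a standard Wronskian identity shows $S(t):=\dot{\mathbf M}(t)\mathbf M(t)^{-1}$ is symmetric and satisfies the matrix Riccati equation. A vertical Jacobi field with initial vertical coefficient $A_0$ has horizontal coefficient $B(t)=-\mathbf M(t)A_0$, so
\[
\tfrac{d}{dt}|B(t)|^2 = 2A_0^T\,\mathbf M(t)^T S(t)\mathbf M(t)\,A_0.
\]
Monotonicity makes the left-hand side nonnegative for every $A_0$ when $t>0$, hence $S(t)\ge 0$ on $(0,\infty)$; a symmetric argument yields $S(t)\le 0$ on $(-\infty,0)$. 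Applying the same computation at each base point $\varphi_s(\alpha)\in\Sigma_c$ gives the analogous sign for the Riccati solution $U_\alpha(s,t)$ used in the proof of Theorem~\ref{Green}, namely $U_\alpha(s,t)\ge 0$ for $t>s$ and $U_\alpha(s,t)\le 0$ for $t<s$; letting $s\to\pm\infty$ in $U_\alpha(s,0)$ then yields $U^+_\alpha(0)\le 0$ and $U^-_\alpha(0)\ge 0$, closing the argument.
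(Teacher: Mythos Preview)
Your proof is correct and follows essentially the same route as the paper's. Both arguments reduce the lemma to Theorem~\ref{main1}, and both hinge on the sign inequality $U^+_\alpha\le 0$ (and its companion $U^-_\alpha\ge 0$), derived from the monotonicity hypothesis, together with semidefiniteness to force a vector in $\tilde\Delta^+\cap\tilde\Delta^-$ to lie in every $\tilde J^o_\alpha(t)$. The differences are cosmetic: the paper obtains $U^+\le 0$ by the limiting argument behind (\ref{Jacobibdd}) and then uses a single global computation with $D^+(t)$, whereas you obtain the sign directly from the Riccati solutions $U_\alpha(s,t)$ and then repeat the pointwise conclusion at each time; the paper works with condition~(1) of Theorem~\ref{main1} directly while you pass through condition~(3) via Lemma~\ref{bddDelta}.
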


\begin{proof}
Let us fix a vector $\tilde w$ and let $b(t)$ be defined by
\[
\tilde w=-\dot b(t)^TE(t)+b(t)^TF(t).
\]

First, assume that $\tilde w$ is contained in $\bigcap_{t\in\Real}\tilde J^o(t)$. By assumption, we have $\dot b\equiv 0$. Therefore, $b(t)$ is constant independent of $t$. It follows that $|\widetilde{d\varphi}_t(\tilde w)^h|$ is constant and the Hamiltonian flow is not Anosov by Theorem \ref{main1}.

Conversely, by assumption and (\ref{Jacobibdd}), $|\widetilde{d\varphi}_{\pm t}(\tilde w)^h|\leq |\tilde w^h|$ for all $t\geq 0$ and for all $\tilde w$ in $\tilde\Delta^\pm$. Since $\tilde\Delta^\pm$ is invariant, we have $|\widetilde{d\varphi}_{\pm t+s}(\tilde w)^h|\leq |\widetilde{d\varphi}_{s}(\tilde w)^h|$ for all $s$. Therefore, if $\tilde w$ is in $\tilde\Delta^+\cap\tilde\Delta^-$, then it follows that $t\mapsto|\widetilde{d\varphi}_{\pm t}(\tilde w)^h|$ is both non-increasing and non-decreasing. Therefore, $t\mapsto|\widetilde{d\varphi}_{t}(\tilde w)^h|$ is constant in $t$.

Let $U^+$ be as in Theorem \ref{Green} and let $D^+$ be defined by $\dot D^+(t)=U^+(t)D^+(t)$ with initial condition $D^+(0)=I$. It follows that
\[
0\geq\frac{d}{dt}|D^+(t)\tilde b|^2=2\left<U^+(t)D^+(t)\tilde b,D^+(t)\tilde b\right>
\]
for all $t>0$ and for all vector $\tilde b$. Since $D^+$ is invertible, $U^+\leq 0$.

Let $b$ be a vector in $\Real^n$ such that
\[
\tilde w=b^T(D^{+}(t)^TF(t)-\dot D^+(t)^TE(t)).
\]
It follows that $|\widetilde{d\varphi}_{t}(\tilde w)^h|=|D^{+}(t)b|$ is constant and we have
\[
0= \frac{1}{2}\frac{d}{dt}\left(b^TD^{+}(t)^TD^{+}(t)b\right)=b^TD^{+}(t)^TU^+(t)D^{+}(t)b.
\]

Since $U^+\leq 0$, we have $\dot D^+(t)^Tb=U^+(t)D^+(t)^Tb=0$. Since $D^+(t)^Tb=b$, we have $\tilde w=b^TF(t)$. This shows (2) implies (1).
\end{proof}

\begin{prop}\label{nonpos1}
Let $c$ be a regular value of $H$ and assume that $\Sigma_c=H^{-1}(c)$ is compact.
Assume that the Hamiltonian flow has no conjugate point. Fix a vector $b$. If $\tilde{\mathcal R}_\alpha(t)b^T\tilde E(t)\geq 0$ for all $t$, then $\tilde {\mathcal R}_\alpha(t)b^T\tilde E(t)= 0$ for all $t$ and $b^T\tilde F(0)$ is contained in $\bigcap_{t\in\Real}\tilde J^o(t)$.
\end{prop}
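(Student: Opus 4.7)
The plan is to regard $J(t):=b^T\tilde E(t)=\sum_i b_i\,\tilde e^i(t)$ as a Jacobi field in the reduced bundle $\mathfrak V$ living in $\tilde J(t)$. The structural equations $\dot{\tilde e}^i=\tilde f^i$ and $\dot{\tilde f}^i=-\tilde R(t)\,\tilde e^i$ yield $\ddot J(t)+\tilde R(t)J(t)=0$, and the orthonormality of $\tilde E(t)$ with respect to the canonical bilinear form $\langle\cdot,\cdot\rangle^t$ gives
\[
\langle\tilde R(t)J(t),J(t)\rangle^t = b^T\tilde{\mathcal R}(t)b.
\]
I therefore interpret the hypothesis $\tilde{\mathcal R}_\alpha(t)b^T\tilde E(t)\ge 0$ as the scalar inequality $b^T\tilde{\mathcal R}(t)b\ge 0$ for every $t$.

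Combining this with the standing non-positivity of the reduced curvature that governs this section (that is, $b^T\tilde{\mathcal R}(t)b\le 0$ for every $b$), I conclude $b^T\tilde{\mathcal R}(t)b\equiv 0$. Because $-\tilde{\mathcal R}(t)$ is a symmetric positive semi-definite matrix whose associated quadratic form annihilates $b$, the standard polarisation (substitute $b\mapsto b+\varepsilon u$, extract the linear-in-$\varepsilon$ term, and let $\varepsilon\to 0$ from either side) forces $\tilde{\mathcal R}(t)b=0$ as a vector in $\Real^{n-1}$, equivalently $\tilde R(t)J(t)=0$ in $\mathfrak V$. This is the first conclusion of the proposition.

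With $\tilde R(t)J(t)\equiv 0$ the Jacobi equation degenerates to $\ddot J(t)=0$, so integrating twice in $t$ gives
\[
b^T\tilde E(t)=b^T\tilde E(0)+t\,b^T\tilde F(0).
\]
Differentiating once more produces $b^T\tilde F(t)=b^T\tilde F(0)$ for every $t\in\Real$. Since by definition of the derivative curve $b^T\tilde F(t)=\sum_i b_i\,\tilde f^i(t)\in\textbf{span}\{\tilde f^1(t),\ldots,\tilde f^{n-1}(t)\}=\tilde J^o(t)$, the fixed vector $b^T\tilde F(0)$ lies in $\tilde J^o(t)$ for every $t$, and hence in $\bigcap_{t\in\Real}\tilde J^o(t)$, which is the second conclusion.

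The principal obstacle is nothing more than reading the hypothesis correctly: once $\tilde{\mathcal R}_\alpha(t)b^T\tilde E(t)\ge 0$ is unpacked as a quadratic-form inequality and paired with the non-positivity of $\tilde{\mathcal R}$ that governs the section, the remainder is a short ODE computation. The assumptions that $\Sigma_c$ is compact and free of conjugate points serve only as the ambient context guaranteeing that the reduced canonical frame $\tilde E(t),\tilde F(t)$ and the derivative curve $\tilde J^o(t)$ are defined for all $t\in\Real$.
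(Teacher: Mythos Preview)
Your argument invokes an assumption that is not among the proposition's hypotheses. You rely on ``the standing non-positivity of the reduced curvature that governs this section,'' but Proposition~\ref{nonpos1} as stated assumes only that $\Sigma_c$ is compact and that the flow has no conjugate point; despite the section title, each statement in this section carries its own explicit hypotheses, and non-positivity of $\tilde{\mathfrak R}$ is not one of them here. With that extra assumption your argument is correct and pleasantly short, but it proves a strictly weaker statement than the one written.

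The paper's proof uses only the stated hypotheses. The absence of conjugate points supplies the Riccati solution $U^+(t)$ (Theorem~\ref{Green}), which is bounded because $\Sigma_c$ is compact (Lemma~\ref{Riccatibdd}). Setting $u(t)=b^TU^+(t)b$, the matrix Riccati equation yields the scalar equation $\dot u+u^2+r=0$ with
\[
r(t)=b^T\tilde{\mathcal R}_\alpha(t)b+b^T(U^+(t))^2b-(b^TU^+(t)b)^2\ge 0
\]
by the hypothesis together with Cauchy--Schwarz. Hopf's argument (a bounded solution of $\dot u+u^2+r=0$ on all of $\Real$ with $r\ge 0$ must vanish identically) then forces $u\equiv 0$ and $r\equiv 0$; from $r\equiv 0$ one reads off $U^+(t)b\equiv 0$, and feeding this back into the matrix Riccati equation gives $\tilde{\mathcal R}_\alpha(t)b\equiv 0$. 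The conclusion $b^T\tilde F(t)=b^T\tilde F(0)$ then follows by the same differentiation you carry out in your last paragraph. Thus compactness and the no-conjugate-point hypothesis are not mere ambient context as you suggest: they are precisely what makes $U^+$ exist and stay bounded, and hence they drive the proof.
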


\begin{proof}
Let $u(t)=b^TU^+(t)b$. Then
\[
\dot u(t)+u(t)^2+r(t)=0
\]
where $r(t)=b^T\tilde{\mathcal R}_\alpha(t)b+b^TU^+(t)^2b-(b^TU^+(t)b)^2\geq 0$.

By an argument in \cite{Ho}, we see that $u\equiv 0$. Therefore, $r\equiv 0$ and so $b^T\tilde{\mathcal R}_\alpha(t)b\equiv 0$. It also follows that $U^+(t)b\equiv 0$ and hence $\dot U^+(t)b\equiv 0$. Therefore, by matrix Riccati equation of $U^+$, we have $\tilde{\mathcal R}_\alpha(t)b\equiv 0$. Finally, we have
\[
\frac{d}{dt}b^TF(t)=-b^T\tilde{\mathcal R}_\alpha(t)E(t)=0.
\]
\end{proof}

\begin{prop}\label{nonpos2}
Let $c$ be a regular value of $H$ and assume that $\Sigma_c=H^{-1}(c)$ is compact. Assume that, for each $\tilde v$ in $\tilde\Lambda_\alpha$ with $\alpha$ in $\Sigma_c$, $|(\widetilde{d\varphi}_t(\tilde v))^h|$ is increasing for each $t>0$ and is decreasing for each $t<0$. If, for each $\alpha$ in $\Sigma_c$, $\tilde{\mathcal R}_\alpha(t)b^T\tilde E(t)<0$ for some $t$, then the Hamiltonian flow is Anosov on $\Sigma_c$.
\end{prop}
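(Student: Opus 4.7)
The plan is to derive Proposition \ref{nonpos2} by contradiction, combining Lemma \ref{anosovJo} with a short structural-equations computation that serves as the converse of the reasoning inside Proposition \ref{nonpos1}. First I would verify that the strict monotonicity hypothesis on $|\widetilde{d\varphi}_t(\tilde v)^h|$ for $\tilde v \in \tilde\Lambda$ already rules out conjugate points: a purely vertical vector has vanishing horizontal part at $t=0$, so strict monotonicity forces $|\widetilde{d\varphi}_t(\tilde v)^h| > 0$ for every $t \neq 0$, which is exactly the required transversality of $\widetilde{d\varphi}_t(\tilde\Lambda_\alpha)$ with $\tilde\Lambda_{\varphi_t(\alpha)}$. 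Therefore Lemma \ref{anosovJo} applies, and it suffices to show that $\bigcap_{t\in\mathbb{R}} \tilde J^o_\alpha(t) = \{0\}$ for every $\alpha \in \Sigma_c$.

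Suppose for contradiction that some $\alpha \in \Sigma_c$ admits a nonzero $\tilde w \in \bigcap_{t\in\mathbb{R}} \tilde J^o_\alpha(t)$. I would fix a canonical frame $(\tilde E_\alpha(t), \tilde F_\alpha(t))$ of $\tilde J_\alpha$ with $\tilde F_\alpha = \dot{\tilde E}_\alpha$ and write $\tilde w = \sum_i b_i(t)\, \tilde f^i_\alpha(t)$, which is possible since $\tilde w$ lies in $\tilde J^o_\alpha(t) = \textbf{span}\{\tilde f^1_\alpha(t),\ldots,\tilde f^{n-1}_\alpha(t)\}$ for each $t$. Differentiating in $t$ (using constancy of $\tilde w$) and applying the structural equation $\dot{\tilde f}^i_\alpha(t) = -\sum_j \tilde{\mathcal R}_{\alpha,ij}(t)\, \tilde e^j_\alpha(t)$ yields
\[
0 = \sum_i \dot b_i(t)\, \tilde f^i_\alpha(t) \;-\; \sum_{i,j} b_i(t)\, \tilde{\mathcal R}_{\alpha,ij}(t)\, \tilde e^j_\alpha(t).
\]
Since $\{\tilde e^j_\alpha(t), \tilde f^i_\alpha(t)\}$ is a Darboux basis, equating the $\tilde f$- and $\tilde e$-coefficients separately gives $\dot b \equiv 0$ (so $b$ is a constant nonzero vector) and, using the symmetry of $\tilde{\mathcal R}_\alpha$, $\tilde{\mathcal R}_\alpha(t) b = 0$ for every $t$. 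In particular $b^T \tilde{\mathcal R}_\alpha(t) b \equiv 0$, contradicting the hypothesis that this quadratic form is strictly negative at some time.

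The only step requiring any care is the simultaneous extraction of the two vanishing conditions from a single derivative computation; this is essentially the reverse of the Riccati argument used in Proposition \ref{nonpos1}. All other ingredients (absence of conjugate points, Lemma \ref{anosovJo}, the structural equations, and the Darboux basis property of the canonical frame) are already in place from earlier sections, so the proof becomes quite short once the problem has been reduced to showing $\bigcap_t \tilde J^o_\alpha(t) = \{0\}$.
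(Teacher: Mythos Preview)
Your proposal is correct and follows essentially the same route as the paper: assuming the flow is not Anosov, Lemma~\ref{anosovJo} produces a nonzero vector in $\bigcap_t \tilde J^o_\alpha(t)$, and differentiating its expression $b(t)^T\tilde F_\alpha(t)$ via the structural equations forces $\dot b\equiv 0$ and $\tilde{\mathcal R}_\alpha(t)b\equiv 0$, contradicting the curvature hypothesis. The paper's version is slightly terser because it imports the constancy of $b$ directly from the last line of the proof of Lemma~\ref{anosovJo} rather than re-deriving it, but the argument is the same.
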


\begin{proof}
Suppose that the Hamiltonian flow is not Anosov. By Proposition \ref{anosovJo}, there is a vector $b^TF(0)=b^TF(t)$ in $J(t)$ for all $t$. If we differentiate this equation, then we obtain $\tilde{\mathcal R}_\alpha(t)b^T\tilde E(t)\equiv 0$ which is a contradiction.
\end{proof}

Finally, we remark that Theorem \ref{nonpositive} follows immediately from Proposition \ref{nonpos1} and \ref{nonpos2}.

\smallskip

\section{Entropy estimates}

In this section, we give the proofs of the two entropy estimates, Theorem \ref{entropy1} and \ref{entropy2}.
Let $v$ be in $\mathfrak V$. The positive $\chi^+$ and negative $\chi^-$ Lyapunov exponents are defined by
\[
\chi^\pm(v)=\lim_{t\to\pm\infty}\frac{1}{|t|}\log|\widetilde{d\varphi}_t(v)|.
\]

Let $E^u_\alpha$, $E^s_\alpha$, and $E^0_\alpha$ be the subspaces of $\mathfrak V$ defined by
\[
\begin{split}
&E^u_\alpha=\{v\in\mathfrak V|\chi^-(v)=-\chi^+(v)<0\},\\
&E^s_\alpha=\{v\in\mathfrak V|\chi^+(v)=-\chi^-(v)<0\},\\
&E^0_\alpha=\{v\in\mathfrak V|\chi^-(v)=\chi^+(v)=0\}.
\end{split}
\]

By Oseledets Theorem, $\mathfrak V_\alpha=E^u_\alpha\oplus E^s_\alpha\oplus E^0_\alpha$ holds for $\mu$-almost all $\alpha$.

\begin{proof}[Proof of Theorem \ref{entropy1}]
The same argument as in \cite[Proposition 2.1]{BaWo} shows that the skew orthogonal complement of $E^u_\alpha$ is $E^u_\alpha\oplus E^0_\alpha$. If $v$ is contained in $E^u_\alpha$, then $|\widetilde{d\varphi}_t(v)|$ is bounded for all $t\leq 0$. By Lemma \ref{bddDelta}, $v$ is contained in $\tilde\Delta^-$. Therefore, $E^u_\alpha\subseteq\tilde\Delta^-_\alpha\subseteq E^u_\alpha\oplus E^0_\alpha$.

By Pesin's formula \cite{Pe},
\[
h_\mu=\int_{\Sigma_c}\chi(\alpha)d\mu(\alpha),
\]
where $\chi(\alpha)=\lim_{t\to\infty}\frac{1}{|t|}\log|\det(\widetilde{d\varphi}_t\big|_{\tilde\Delta^-_\alpha})|$. Here determinant is taken with respect to orthonormal frames of any Riemannian metric.

Let $U(s,t)$ be as in the proof of Theorem \ref{Green} and let $U^-(t)=\lim_{s\to-\infty}U(s,t)$. Then $\tilde\Delta^-$ is spanned by the components of
\[
\tilde F_\alpha(0)-U^-_\alpha(0)\tilde E_\alpha(0)=B_\alpha(t)^T\tilde F_\alpha(t)-\dot B_\alpha(t)^T\tilde E_\alpha(t),
\]
where $B_\alpha(\cdot)$ is the solution of $\ddot B_\alpha(t)=-\mathcal{\tilde R}_\alpha(t)B_\alpha(t)$ with $B_\alpha(0)=I$.

If we let $\left<\cdot,\cdot\right>$ be a Riemannian metric on $\Sigma_c$ such that
\[
\tilde F_\alpha(0)-U^-_\alpha(0)\tilde E_\alpha(0)
\]
is orthonormal in $\tilde\Delta^-_\alpha$. If we use this Riemannian metric in the definition of $\chi$, then it follows that
\[
\chi(\alpha)=\lim_{t\to\infty}\frac{1}{t}\log\det B_\alpha(t)=\lim_{t\to\infty}\frac{1}{t}\int_0^t\textbf{tr}(U^-_\alpha(s))ds.
\]

By Birkhoff's ergodic theorem and Pesin's formula, we have
\[
h_\mu=\int_{\Sigma_c}\textbf{tr}(U^-_\alpha(0))d\mu(\alpha).
\]

By Cauchy-Schwarz's inequality, we obtain
\[
h_\mu\leq(n-1)^{1/2}\left(\int_{\Sigma_c}(\textbf{tr}\,U^-_\alpha(0))^2d\mu(\alpha)\right)^{1/2}.
\]

Since $U^-_\alpha(t)=U^-_{\widetilde{d\varphi}_t(\alpha)}(0)$ and $\mu$ is invariant, it follows from the matrix Riccati equation that
\[
h_\mu\leq (n-1)^{1/2}\left(-\int_{\Sigma_c}\tilde{\mathfrak r}_\alpha d\mu(\alpha)\right)^{1/2}.
\]

If equality holds, then $U_\alpha^-(0)$ is constant for $\mu$ almost all $\alpha$. It follows from the Riccati equation that $\tilde{\mathfrak R}$ is constant on the support of $\mu$.
\end{proof}

\begin{proof}[Proof of Theorem \ref{entropy2}]
By \cite[Lemma 3.1]{PaPe}, we have
\[
h_\mu\leq\liminf_{t\to 0}\frac{1}{t}\int_{\Sigma_c}\log(\textbf{ex}(\widetilde{d\varphi}_t))d\mu
\]
where $\textbf{ex}\Phi$ is the expansion of the linear map $\Phi$ defined as
\[
\textbf{ex}\Phi=\sup_S\det\Phi|_S
\]
where the supremum is taken over all nontrivial subspaces $S$.

Let $C(t)$ and $D(t)$ be the matrices defined by
\[
\tilde E_\alpha(0)=-\dot C(t)\tilde E_\alpha(t)+C(t)\tilde F_\alpha(t), \quad \tilde F_\alpha(0)=-\dot D(t)\tilde E_\alpha(t)+D(t)\tilde F_\alpha(t).
\]

The matrices $C(t)$ is a solution to the equation
\begin{equation}\label{eqn}
\ddot C(t)=-\tilde{\mathcal R}_\alpha(t)C(t)
\end{equation}
with initial conditions $C(0)=0$ and $\dot C(0)=-I$.

Similarly, $D(t)$ is a solution of the same equation which satisfies $D(0)=I$ and $\dot D(0)=0$.

It follows that $\widetilde{d\varphi}_t$ sends $\tilde E_\alpha(0)$ and $\tilde F_\alpha(0)$ to
\[
\left(
              \begin{array}{cc}
                -\dot C(t) & -\dot D(t) \\
                C(t) & D(t) \\
              \end{array}
            \right)\left(
                     \begin{array}{c}
                       \tilde E_{\varphi_t(\alpha)}(0) \\
                       \tilde F_{\varphi_t(\alpha)}(0) \\
                     \end{array}
                   \right)
.
\]

Using (\ref{eqn}), we see that
\[
\left(
              \begin{array}{cc}
                -\dot C(t) & -\dot D(t) \\
                C(t) & D(t) \\
              \end{array}
            \right)=\left(
              \begin{array}{cc}
                I & 0 \\
                0 & I \\
              \end{array}
            \right)+t \left(
              \begin{array}{cc}
                0 & \tilde{\mathcal R}(0) \\
                -I & 0 \\
              \end{array}
            \right)+o(t)
\]
as $t\to 0$.

It follows as in \cite{PaPe} that
\[
\textbf{ex}(\widetilde{d\varphi}_t)=1+\frac{t}{2}\sum_{i=1}^{n-1}|\lambda_i-1|+o(t),
\]
where $\lambda_i$ are eigenvalues of the matrix $\tilde{\mathfrak R}$.
\end{proof}

\smallskip

\section{Appendix: On Second Order Equations}\label{On}

In this appendix, we recall some facts on the fundamental solutions of the equation
\begin{equation}\label{2ndscalar}
\ddot a(t)=-\mathcal R(t)a(t).
\end{equation}
The results in this section are well-known. They can be found, for instance, in \cite{Gr,Eb,CoIt}. For the convenience of the readers, we also include the proofs of various results.

Let $B$ be the matrix solution of the equation
\begin{equation}\label{2nd}
\ddot B(t)+\mathcal R(t)B(t)=0
\end{equation}
with initial conditions $B(0)=0$ and $\dot B(0)=I$.

For each time $t$ where $B(t)$ is invertible, we set $S(t):=\dot B(t)B(t)^{-1}$. Then $S(t)$ is a family of symmetric matrices satisfying
\begin{equation}\label{S}
\dot S(t)+S(t)^2+\mathcal R(t)=0.
\end{equation}

Let $D(s,t)$ be defined by
\begin{equation}\label{D}
D(s,t)=B(t)\int_t^sB(\tau)^{-1}(B(\tau)^{-1})^Td\tau.
\end{equation}

From now on, we denote the derivative with respect to $t$ and $s$ by dot and prime, respectively. For instance, $\dot D$ denotes derivative of $D$ with respect to $t$ and $D'$ denotes derivative with respect to $s$.

\begin{lem}\label{D}
The family of matrices $t\mapsto D(s,t)$ is a solution of the equation (\ref{2nd}) which satisfies the boundary conditions
\[
D(s,0)=I, \quad D(s,s)=0, \quad \dot D(s,s)=-(B(s)^{-1})^T.
\]
\end{lem}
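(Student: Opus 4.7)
The plan is to verify the three boundary conditions and the second-order equation in turn. The conditions at $t = s$ follow immediately from the Leibniz rule, the ODE reduces to a short identity governed by the symmetry of $S(t) = \dot B(t) B(t)^{-1}$, and only the condition $D(s, 0) = I$ is delicate, since the integrand $B(\tau)^{-1}(B(\tau)^{-1})^T$ blows up like $\tau^{-2} I$ as $\tau \to 0^+$ (recall $B(\tau) = \tau I + O(\tau^3)$ near $0$) and must be regularized by the factor $B(t) \sim t I$. I would handle this last point via a constant-Wronskian trick rather than a direct asymptotic expansion.

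First, $D(s, s) = B(s) \cdot 0 = 0$ is immediate, and differentiating yields
\[
\dot D(s, t) = \dot B(t) \int_t^s B(\tau)^{-1}(B(\tau)^{-1})^T d\tau - (B(t)^{-1})^T,
\]
so $\dot D(s, s) = -(B(s)^{-1})^T$. A second differentiation, using $\frac{d}{dt}(B^{-1})^T = -(B^{-1})^T \dot B^T (B^{-1})^T$, gives
\[
\ddot D(s, t) = \ddot B(t) \int_t^s B^{-1}(B^{-1})^T d\tau - \dot B(t) B(t)^{-1}(B(t)^{-1})^T + (B(t)^{-1})^T \dot B(t)^T (B(t)^{-1})^T.
\]
The first term equals $-\mathcal R(t) D(s, t)$ by (\ref{2nd}), while the remaining two cancel precisely because $\dot B B^{-1}$ is symmetric; hence $\ddot D + \mathcal R D = 0$ on $(0, s]$. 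The symmetry of $\dot B B^{-1}$ is the standard Wronskian fact: $B^T \dot B - \dot B^T B$ has derivative $B^T \ddot B - \ddot B^T B = -B^T(\mathcal R - \mathcal R^T) B = 0$ and vanishes at $t = 0$ since $B(0) = 0$.

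Because (\ref{2nd}) is regular at $t = 0$, the solution $D(s, \cdot)$ on $(0, s]$ extends uniquely and smoothly to $[0, s]$, so $D(s, 0)$ is a well-defined matrix determined by the data at $t = s$. To compute it, introduce the matrix Wronskian
\[
W(t) := B(t)^T \dot D(s, t) - \dot B(t)^T D(s, t),
\]
whose derivative equals $B^T \ddot D - \ddot B^T D = -B^T \mathcal R D + B^T \mathcal R^T D = 0$ by the symmetry of $\mathcal R$, so $W$ is constant. Evaluating at $t = s$ gives $W(s) = B(s)^T \bigl(-(B(s)^{-1})^T\bigr) - \dot B(s)^T \cdot 0 = -I$, while evaluating at $t = 0$, using $B(0) = 0$ and $\dot B(0) = I$, gives $W(0) = 0 - I \cdot D(s, 0) = -D(s, 0)$. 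Comparing the two values yields $D(s, 0) = I$.

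The main obstacle, which this Wronskian argument circumvents, is that the pointwise limit in the integral representation is of indeterminate $0 \cdot \infty$ form and would otherwise require an explicit several-term expansion of $B(\tau)$ near the origin. The Wronskian identity replaces that asymptotic analysis with a clean algebraic computation using only the initial data $B(0) = 0$, $\dot B(0) = I$ and the already verified boundary data at $t = s$.
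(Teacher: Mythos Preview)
Your proof is correct and follows essentially the same approach as the paper: the paper also dismisses the verification of the ODE and the conditions at $t=s$ as ``a computation'' and then uses the constancy of the Wronskian $\dot D(s,t)^T B(t) - D(s,t)^T \dot B(t)$ (the transpose of your $W$) to obtain $D(s,0)=I$. Your write-up is simply more explicit, in particular about the symmetry of $\dot B B^{-1}$ and about why the integral formula, which is a priori only defined for $t\in(0,s]$, extends to $t=0$ via the regularity of the linear ODE with data at $t=s$.
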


\begin{proof}
A computation shows that $t\mapsto D(s,t)$ is a solution of the equation (\ref{2nd}) which satisfies the conditions $D(s,s)=0$ and $\dot D(s,s)=-(B(s)^{-1})^T$. Since the Wronskian $\dot D(s,t)^TB(t)-D(s,t)^T\dot B(t)$ is independent of time $t$, we also have $D(s,0)=I$.
\end{proof}

Let $U(s,t)=\dot D(s,t)D(s,t)^{-1}$. $U(s,t)$ is a solution of the equation
\begin{equation}\label{U}
\dot U(s,t)+U(s,t)^2+\mathcal R(t)=0.
\end{equation}

Next, we apply the following comparison principle of matrix Riccati equations \cite[Theorem 1]{Ro} (see \cite{Ro} for the proof).

\begin{thm}\label{comparison}
Let $A_i(t)$ be a family symmetric matrices. Let $S_i$ be the solution of the matrix Riccati equation
\[
\dot S_i(t)+S_i(t)A_i(t) S_i(t)+\mathcal R_i(t)=0, \quad i=1,2.
\]
Assume that $S_2(t_0)\geq S_1(t_0)$ for some $t_0$ and  $\mathcal R_1(t)\geq \mathcal R_2(t)$, $A_1(t)\geq A_2(t)$ for all $t\geq t_0$. Then
\[
S_2(t)\geq S_1(t)
\]
for all $t\geq t_0$.
\end{thm}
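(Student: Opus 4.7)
The plan is to study the difference $W(t) := S_2(t) - S_1(t)$, derive a linear matrix ODE for $W$ of the form $\dot W + MW + WM^T = P$ with $P \geq 0$, and then use the fundamental matrix of the associated homogeneous equation to transfer the non-negativity of $W(t_0)$ and of $P$ into non-negativity of $W(t)$.

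First I would subtract the two Riccati equations. Writing
\[
S_1 A_1 S_1 - S_2 A_2 S_2 = S_1 (A_1-A_2) S_1 + (S_1 A_2 S_1 - S_2 A_2 S_2),
\]
and using the algebraic identity
\[
S_1 A_2 S_1 - S_2 A_2 S_2 = -\tfrac{1}{2}\bigl[(S_1+S_2)A_2 W + W A_2 (S_1+S_2)\bigr],
\]
which holds whenever $W = S_2 - S_1$ (expand both sides; the asymmetric pieces $\pm W A_2 W$ cancel), one arrives at
\[
\dot W + M W + W M^T = P,
\]
where $M(t) := \tfrac{1}{2}(S_1(t) + S_2(t)) A_2(t)$ and $P(t) := (\mathcal R_1(t) - \mathcal R_2(t)) + S_1(t)(A_1(t) - A_2(t)) S_1(t)$. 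The assumptions $\mathcal R_1 \geq \mathcal R_2$ and $A_1 \geq A_2$ (so that $A_1 - A_2$ has a symmetric square root) give $P(t) \geq 0$.

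Next, let $\Phi(t)$ solve $\dot\Phi = -M(t)\,\Phi$ with $\Phi(t_0) = I$; this $\Phi$ is invertible on any interval where $S_1, S_2$ are defined. Setting $W(t) = \Phi(t)\,Y(t)\,\Phi(t)^T$ and differentiating gives
\[
\Phi \dot Y \Phi^T = \dot W + M W + W M^T = P,
\]
so $\dot Y = \Phi^{-1} P\,(\Phi^T)^{-1} \geq 0$ as a symmetric matrix. Since $Y(t_0) = W(t_0) \geq 0$ by hypothesis, integrating shows $Y(t) \geq Y(t_0) \geq 0$, and hence $W(t) = \Phi(t) Y(t) \Phi(t)^T \geq 0$ for all $t \geq t_0$.

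The only nontrivial step is the algebraic symmetrization in the first paragraph: the raw form of the ODE for $W$ reads $\dot W + S_1 A_2 W + W A_2 S_2 = P$, which is not of the shape $MW + WM^T$ for any time-dependent $M$ independent of $W$. Recognizing that this expression agrees with $\tfrac{1}{2}[(S_1+S_2)A_2 W + W A_2(S_1+S_2)]$ along the solution (equivalently, exploiting $W = W^T$ and averaging with the transposed equation) is what turns the problem into a conjugation argument. Once this is observed, the remainder is a standard integrating-factor computation and presents no further obstacle.
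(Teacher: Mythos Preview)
The paper does not actually prove this theorem: it quotes the comparison principle and sends the reader to Royden~\cite{Ro} for the argument. Your proof is correct and self-contained. The symmetrization identity
\[
S_1 A_2 S_1 - S_2 A_2 S_2 = -\tfrac{1}{2}\bigl[(S_1+S_2)A_2 W + W A_2(S_1+S_2)\bigr]
\]
checks out (both sides equal $-(WA_2S_2 + S_1A_2W)$ once one expands and uses $W=S_2-S_1$), and the conjugation $W=\Phi Y\Phi^T$ with $\dot\Phi=-M\Phi$ reduces the problem to $\dot Y\geq 0$, $Y(t_0)\geq 0$, which is immediate. This is the standard linearize-and-conjugate route and is in the same spirit as Royden's original argument.

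One point you use silently and should state: the $S_i$ are symmetric. The theorem as written only declares $A_i$ symmetric, but the hypotheses $S_2(t_0)\geq S_1(t_0)$ and $\mathcal R_1\geq\mathcal R_2$ already presuppose the symmetric setting, and symmetry of $S_i$ is preserved by the Riccati flow once it holds at $t_0$ (the transpose satisfies the same ODE with the same initial data). Without $W=W^T$ the averaging trick that produces the $MW+WM^T$ form would fail, so it is worth making this explicit.
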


For convenience, we also state the result for $t\leq t_0$.

\begin{thm}\label{comparisonInv}
Let $A_i(t)$ be a family of symmetric matrices. Let $S_i$ be the solution of the matrix Riccati equation
\[
\dot S_i(t)+S_i(t)A_i(t) S_i(t)+\mathcal R_i(t)=0, \quad i=1,2.
\]
Assume that $S_2(t_0)\geq S_1(t_0)$ for some $t_0$ and  $\mathcal R_2(t)\geq \mathcal R_1(t)$, $A_2(t)\geq A_1(t)$ for all $t\leq t_0$. Then
\[
S_2(t)\geq S_1(t)
\]
for all $t\leq t_0$.
\end{thm}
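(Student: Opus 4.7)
My plan is to deduce Theorem \ref{comparisonInv} from Theorem \ref{comparison} by a time-reversal argument. The key observation is that the matrix Riccati equation is preserved under the transformation $S(t) \mapsto -S(2t_0 - t)$, so reflecting time about $t_0$ converts the backward comparison statement into a forward one to which the already proved result applies.

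Concretely, I would set $\tau = 2t_0 - t$ and define $\tilde S_i(\tau) := -S_i(2t_0 - \tau)$, together with $\tilde A_i(\tau) := A_i(2t_0 - \tau)$ and $\tilde{\mathcal R}_i(\tau) := \mathcal R_i(2t_0 - \tau)$. Differentiating and using the Riccati equation for $S_i$ gives $\frac{d\tilde S_i}{d\tau}(\tau) = -\dot S_i(2t_0 - \tau) \cdot (-1) = -S_i A_i S_i - \mathcal R_i$ evaluated at $2t_0 - \tau$. Because $(-\tilde S_i)\tilde A_i(-\tilde S_i) = \tilde S_i \tilde A_i \tilde S_i$, this rearranges to
\[
\frac{d\tilde S_i}{d\tau} + \tilde S_i \tilde A_i \tilde S_i + \tilde{\mathcal R}_i = 0,
\]
so $\tilde S_i$ solves a matrix Riccati equation of exactly the same form with coefficients $\tilde A_i,\tilde{\mathcal R}_i$.

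Next I would track how the hypotheses transform. The condition $S_2(t_0) \geq S_1(t_0)$ becomes $\tilde S_1(t_0) \geq \tilde S_2(t_0)$ after the sign reversal, while $\mathcal R_2(t) \geq \mathcal R_1(t)$ and $A_2(t) \geq A_1(t)$ for all $t \leq t_0$ become $\tilde{\mathcal R}_2(\tau) \geq \tilde{\mathcal R}_1(\tau)$ and $\tilde A_2(\tau) \geq \tilde A_1(\tau)$ for all $\tau \geq t_0$, since $t \leq t_0$ iff $\tau \geq t_0$. I would then apply Theorem \ref{comparison} to the pair $(\tilde S_i)$ with the indices relabeled so that the "larger" solution at the initial time carries the subscript $2$ of the cited theorem; that is, take the theorem's $S_2$ to be our $\tilde S_1$ and the theorem's $S_1$ to be our $\tilde S_2$. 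Every hypothesis matches, and the conclusion yields $\tilde S_1(\tau) \geq \tilde S_2(\tau)$ for all $\tau \geq t_0$. Undoing the substitution gives $-S_1(t) \geq -S_2(t)$, i.e.\ $S_2(t) \geq S_1(t)$, for all $t \leq t_0$, which is the required inequality.

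I do not expect any substantive obstacle; the content is entirely carried by Theorem \ref{comparison}. The only step demanding care is the sign bookkeeping in the time reversal — specifically, verifying that the sign flip in $\tilde S_i := -S_i$ simultaneously reverses the direction of the initial inequality at $t_0$ (which is what lets us swap the roles of indices $1$ and $2$ when invoking Theorem \ref{comparison}) while leaving the quadratic term $\tilde S \tilde A \tilde S$ and the curvature term $\tilde{\mathcal R}$ untouched.
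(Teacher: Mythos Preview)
Your proposal is correct and matches what the paper has in mind: the paper does not give a separate proof of Theorem~\ref{comparisonInv} at all, introducing it merely as a convenience restatement of Theorem~\ref{comparison} for $t\le t_0$, so the time-reversal reduction you spell out is exactly the implicit argument. Your sign bookkeeping is accurate, and the swapping of indices when invoking Theorem~\ref{comparison} is handled correctly.
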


For the rest of this section, we assume that any solution $B(\cdot)$ of the equation (\ref{2nd}) satisfies the following assumption. This assumption is satisfied by certain family of matrices associated to a monotone Hamiltonian system without conjugate point.
\begin{assume}\label{noconjugate}
If $B(t_0)=0$ and $\det \dot B(t_0)\neq 0$ for some $t_0$, then $\det B(t)\neq 0$ for  all $t\neq t_0$.
\end{assume}

Under this assumption, the matrix $U(s,t)$ is invertible whenever $s\neq t$.

\begin{lem}\label{Ubdd}
Assume that $s_1<s_2<0<s_3<s_4$. Then, under Assumption \ref{noconjugate},
\[
U(s_2,t)\geq U(s_1,t)\geq U(s_4,t)\geq U(s_3,t)
\]
for all $t$ in the open interval $(s_2,s_3)$.
\end{lem}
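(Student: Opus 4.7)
The plan is to observe that each function $t\mapsto U(s,t)$ satisfies the common matrix Riccati equation (\ref{U}) on $\{t\ne s\}$, and (under Assumption \ref{noconjugate}) that all four solutions $U(s_i,\cdot)$ are smooth throughout $(s_2,s_3)$. The three inequalities will then be produced by applying Theorem \ref{comparison} (propagating forward in time) or Theorem \ref{comparisonInv} (propagating backward), with $\mathcal R_1=\mathcal R_2=\mathcal R(t)$ and $A_1=A_2=I$, as soon as I locate a single time $t_0$ where the desired matrix inequality already holds.

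To produce such a $t_0$, I would first extract the blow-up asymptotics of $U(s,t)$ at $t=s$. From Lemma \ref{D}, $D(s,s)=0$ and $\dot D(s,s)=-(B(s)^{-1})^T$, so a Taylor expansion gives $D(s,t)=-(B(s)^{-1})^T(t-s)+O((t-s)^2)$ and $\dot D(s,t)=-(B(s)^{-1})^T+O(t-s)$. Inverting yields
\[
U(s,t)=\dot D(s,t)\,D(s,t)^{-1}=\frac{1}{t-s}I+O(1)\qquad\text{as }t\to s.
\]
The crucial feature is that the singular part is a scalar multiple of $I$: for $s>0$ the smallest eigenvalue of $U(s,t)$ tends to $-\infty$ as $t\to s^-$, while for $s<0$ the largest eigenvalue tends to $+\infty$ as $t\to s^+$. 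Any finite value of a competing $U(s_j,t_0)$ is therefore automatically dominated once $t_0$ is chosen close enough to the relevant singularity.

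With this in hand, each of the three inequalities follows from one comparison. For $U(s_4,t)\ge U(s_3,t)$: fix $t\in(s_2,s_3)$ and pick $t_0\in(t,s_3)$ close enough to $s_3$ that $U(s_4,t_0)\ge U(s_3,t_0)$, then invoke Theorem \ref{comparisonInv} to propagate the ordering backward from $t_0$ to $t$. For $U(s_2,t)\ge U(s_1,t)$: fix $t\in(s_2,s_3)$ and pick $t_0\in(s_2,t)$ close enough to $s_2$ that $U(s_2,t_0)\ge U(s_1,t_0)$, then invoke Theorem \ref{comparison} to carry the inequality forward to $t$. For the middle inequality $U(s_1,t)\ge U(s_4,t)$: choose $t_0\in(s_1,s_2)$ close to $s_1$; the blow-up of $U(s_1,\cdot)$ there dominates the finite $U(s_4,t_0)$, and a single application of Theorem \ref{comparison} propagates the ordering forward through all of $(s_2,s_3)$, since both solutions remain smooth on $(s_1,s_4)$ by Assumption \ref{noconjugate}.

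The main technical point, and essentially the only place where one has to do any honest work, is the Taylor-expansion step: one must verify that the leading singularity of $U(s,\cdot)$ at $t=s$ is genuinely the scalar matrix $(t-s)^{-1}I$ rather than merely being unbounded in some direction, so that the sign of $t_0-s$ alone controls the matrix ordering at $t_0$. Once this is secured, the comparison principles finish the proof mechanically.
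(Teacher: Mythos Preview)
Your proposal is correct and follows essentially the same route as the paper: establish the asymptotic behaviour of $U(s,\cdot)$ at its singularity $t=s$, use this to obtain the desired matrix ordering at a nearby time $t_0$, and then propagate via the Riccati comparison principles (Theorems \ref{comparison} and \ref{comparisonInv}). Your Taylor-expansion computation showing $U(s,t)=\frac{1}{t-s}I+O(1)$ is in fact a cleaner way to secure the sign of the blow-up than the paper's appeal to the Riccati equation for the sign of $\dot U$, since it makes explicit that the singular part is a \emph{scalar} matrix and hence controls all eigenvalues simultaneously.
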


\begin{proof}
Let us give the proof of $U(s_2,t)\geq U(s_1,t)$. Other cases follow by a similar argument. By Lemma \ref{D} and the definition of $U$, the eigenvalues of $U$ blows up as $t$ approaches $s_1$. Therefore, by the matrix Riccati equation (\ref{U}), $\dot U(s_2,t)<0$ for all $t$ near $s_2$. It follows that the eigenvalues of $U(s_2,t)$ goes to $+\infty$ as $t\to s_2^+$ and goes to $-\infty$ as $t\to s_2^-$. In particular, $U(s_2,t)\geq U(s_1,t)$ for all $t>s_2$ and near $s_2$. Therefore, the result follows from Theorem \ref{comparison}.
\end{proof}

It follows from the above lemma that we can define the following
\[
U^{+}(t):=\lim_{s\to+\infty}U(s,t),\quad U^{-}(t):=\lim_{s\to-\infty}U(s,t).
\]

Since both $U^+$ and $U^-$ are solutions of the equation (\ref{S}), the comparison theorem also gives the following estimate.

\begin{lem}\label{Riccatibdd}
Assume that $\mathcal R(t)\geq -k^2I$ for some constant $k>0$. Then
\[
\begin{split}
&k\coth(kt)\,I\geq S(t)> U^{-}(t)\geq U^{+}(t) \\
&\quad (\text{resp. } k\coth(kt)\,I\leq S(t)< U^{+}(t)\leq U^{-}(t))
\end{split}
\]
for all $t>0$ (resp. $t<0$).
\end{lem}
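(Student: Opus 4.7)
My plan is to treat the three nested inequalities separately. The innermost $U^-(t) \geq U^+(t)$ is immediate from Lemma \ref{Ubdd}: for any $t$, choose $s_1 < s_2 < 0 < s_3 < s_4$ with $t \in (s_2, s_3)$ and pass to the limits $s_1 \to -\infty$, $s_4 \to +\infty$ in the inequality $U(s_1, t) \geq U(s_4, t)$.

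For the middle inequality $S(t) > U^-(t)$ at $t > 0$, note that $S$ and $U(s, \cdot)$ both satisfy the Riccati equation (\ref{S}) but blow up at $t = 0$ and $t = s$ respectively. Since $U(s, \cdot)$ is bounded near $0$ for any fixed $s < 0$ while $S(t) \to +\infty$ as $t \to 0^+$, at some small $t_0 > 0$ we have $S(t_0) \geq U(s, t_0)$, and Theorem \ref{comparison} propagates this to $S \geq U(s, \cdot)$ on $[t_0, \infty)$; letting $t_0 \to 0^+$ and then $s \to -\infty$ gives $S \geq U^-$. Uniqueness for the Riccati ODE then upgrades this to strict inequality, since equality at any point would force $S \equiv U^-$ on $(0, \infty)$, contradicting $U^-(0) < \infty$ and $S(0^+) = +\infty$.

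The outer inequality $k\coth(kt)\,I \geq S(t)$ is the main technical point, since the model solution $\hat S(t) := k\coth(kt)\,I$ (solving $\dot{\hat S} + \hat S^2 - k^2 I = 0$) and $S$ both blow up at $t = 0$, so Theorem \ref{comparison} cannot be applied with initial data there. My approach is to introduce the constant-curvature model $\hat B(t) := \tfrac{\sinh(kt)}{k}\,I$ (so $\ddot{\hat B} - k^2 \hat B = 0$ with $\hat B(0) = 0$, $\dot{\hat B}(0) = I$) and perform the substitution $B = \hat B X$ on $(0,\infty)$. A short asymptotic calculation shows $X(0^+) = I$ and $\dot X(0^+) = 0$; plugging into $\ddot B + \mathcal R B = 0$ and using $\ddot{\hat B} = k^2 \hat B$ together with the scalar nature of $\hat B$ yields $\ddot X + 2\hat S\,\dot X + (k^2 I + \mathcal R)\,X = 0$ and $Y := S - \hat S = \dot X X^{-1}$. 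Symmetry of $S$ (from the Wronskian identity $B^T\dot B = \dot B^T B$) and of $\hat S$ makes $Y$ symmetric, and a direct computation gives $\dot Y + 2\hat S\,Y + Y^2 + Q = 0$ with $Q := k^2 I + \mathcal R \geq 0$.

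The crucial observation is that $2\hat S\,\sinh^2(kt) = \tfrac{d}{dt}\sinh^2(kt)$, so $\sinh^2(kt)$ is an integrating factor:
\[
\frac{d}{dt}\bigl(\sinh^2(kt)\,Y(t)\bigr) = -\sinh^2(kt)\bigl(Y(t)^2 + Q(t)\bigr).
\]
Integrating from $0$ to $t > 0$ (the boundary term vanishes, as $\sinh^2(k\delta)\,Y(\delta) = O(\delta^3)$) expresses $\sinh^2(kt)\,Y(t)$ as the negative of a positive-semidefinite matrix integral, so $Y(t) \leq 0$ and hence $S(t) \leq k\coth(kt)\,I$. The case $t < 0$ follows by the same argument integrated over $[t, 0]$, using Theorem \ref{comparisonInv} in place of Theorem \ref{comparison} for the middle inequality. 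The main obstacle throughout is the simultaneous blow-up of $S$ and $\hat S$ at $t = 0$; the substitution $B = \hat B X$ is the device that removes it, converting the comparison of two unbounded Riccati solutions into a statement about a single symmetric matrix $Y$ that starts at $0$ with controlled asymptotics.
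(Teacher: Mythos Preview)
Your proof is correct. The treatment of the two inner inequalities $S>U^-\ge U^+$ matches the paper's: the paper also gets $U^-\ge U^+$ from Lemma~\ref{Ubdd} and says $S>U^-$ follows ``by an argument similar to Lemma~\ref{Ubdd}'', which is exactly the blow-up/comparison/uniqueness reasoning you spell out.

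Where you genuinely diverge is the outer inequality $k\coth(kt)\,I\ge S(t)$. The paper simply observes that $t\mapsto k\coth(kt)\,I$ solves \eqref{S} with $\mathcal R\equiv -k^2I$ and invokes Theorem~\ref{comparison}. You are right that this is terse: both solutions have a pole at $t=0$, so one cannot literally plug $t_0=0$ into Theorem~\ref{comparison}. The paper's route can be made rigorous in a line or two (e.g.\ pass to the inverses $V=-S^{-1}$, $\hat V=-\tfrac{\tanh(kt)}{k}I$, which both extend continuously to $0$ at $t=0$ and satisfy Riccati equations to which Theorem~\ref{comparison} applies directly; or run the comparison with $k+\varepsilon$ and let $\varepsilon\downarrow 0$). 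Your substitution $B=\hat B X$ and the integrating-factor identity
\[
\frac{d}{dt}\bigl(\sinh^2(kt)\,Y(t)\bigr)=-\sinh^2(kt)\bigl(Y(t)^2+Q(t)\bigr),\qquad Q=k^2I+\mathcal R\ge 0,
\]
is a different, fully self-contained argument: it never appeals to Theorem~\ref{comparison} at all and handles the simultaneous blow-up head-on via the $O(\delta^3)$ boundary term. The trade-off is that the paper's approach is shorter once one accepts the standard Riccati-comparison machinery, while yours is more elementary and makes explicit the point the paper sweeps under the rug.
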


\begin{proof}
By Lemma \ref{Ubdd} and the definitions of $U^+$ and $U^-$, we clearly have $U^{-}(t)\geq U^{+}(t)$ for all $t$. By an argument similar to Lemma \ref{Ubdd}, we see that $S(t)>U^-(t)$ (resp. $S(t)<U^+(t)$) if $t>0$ (resp. $t<0$). The family $t\mapsto k\coth(kt)I$ is a solution of (\ref{S}) with $R(t)=-k^2I$. Therefore, by Theorem \ref{comparison}, $k\coth(kt)\,I\geq S(t)$ for all $t>0$. A similar reasoning shows that $k\coth(kt)\,I\leq S(t)$ for $t<0$.
\end{proof}

Let $D^\pm$ be the solutions of the equation
\[
\dot D^\pm(t)=U^\pm(t)D^\pm(t)
\]
with initial condition $D^\pm(0)=I$.

\begin{lem}\label{nonpositiveRiccati}
Assume that there are non-negative constants $K_1$ and $K_2$ such that $-K_2^2I\geq \mathcal R(t)\geq -K_1^2I$. Then $U^+$ (resp. $U^-$) satisfies the following
\[
-K_2I\geq U^+(t)\geq -K_1I\quad (\text{resp. } K_1I\geq U^-(t)\geq K_2I)
\]
for all $t$ and
\[
|b|e^{-K_1t}\leq |D^+(t)b|\leq |b|e^{-K_2t} \quad (\text{resp. } |b|e^{K_2t}\leq |D^-(t)b|\leq |b|e^{K_1t})
\]
for any vector $b$ and all $t>0$.
\end{lem}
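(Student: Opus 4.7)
The plan is to first prove the matrix bounds on $U^{\pm}$ by comparing with the explicit Riccati solutions for the constant-curvature cases $\mathcal R \equiv -K_i^2 I$, and then to deduce the growth/decay estimates on $|D^{\pm}(t)b|$ by integrating a two-sided differential inequality for $|D^{\pm}(t)b|^2$. For constant curvature $\mathcal R \equiv -K^2 I$, solving the Jacobi equation (\ref{2nd}) with the boundary conditions of Lemma \ref{D} yields the explicit family $V_K(s,t) = K\coth(K(t-s))\,I$, whose pointwise limits as $s \to \pm\infty$ are $\mp K I$. For the given time-varying $\mathcal R$ satisfying $-K_1^2 I \leq \mathcal R(t) \leq -K_2^2 I$, I would compare $U(s,\cdot)$ with $V_K(s,\cdot)$ for $K \in \{K_1, K_2\}$. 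Both solutions share the singular behaviour $(t-s)^{-1} I$ at $t = s$, and a further expansion using the Jacobi equation (with $\ddot D(s,s) = 0$ and $\dddot D(s,s) = \mathcal R(s)\dot D(s,s)$) gives
\[
U(s,t) - V_K(s,t) = \frac{s-t}{3}\bigl(\mathcal R(s) + K^2 I\bigr) + O\bigl((s-t)^2\bigr).
\]
For $t < s$ the factor $(s-t)/3$ is positive, and the curvature hypothesis makes $\mathcal R(s) + K_1^2 I \geq 0$ and $\mathcal R(s) + K_2^2 I \leq 0$. After perturbing $K_i \mapsto K_i \pm \varepsilon$ to secure strict inequality at some initial time $t_0$ just below $s$, the backward comparison Theorem \ref{comparisonInv} extends the bound to all $t < s$; sending $\varepsilon \to 0$ and then $s \to +\infty$ yields $-K_1 I \leq U^+(t) \leq -K_2 I$. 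The analogous argument with $t > s$, $s \to -\infty$, and Theorem \ref{comparison} in place of Theorem \ref{comparisonInv} gives $K_2 I \leq U^-(t) \leq K_1 I$.

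The symmetry of $U^{\pm}$, which follows from the constancy of the Wronskian $B^T\dot B - \dot B^T B$ for solutions of (\ref{2nd}), promotes the operator bounds above to pointwise quadratic-form bounds. From $\dot D^+ = U^+ D^+$ with $D^+(0) = I$,
\[
\frac{d}{dt}|D^+(t)b|^2 = 2\bigl\langle U^+(t) D^+(t)b,\, D^+(t)b\bigr\rangle,
\]
which combined with $-K_1 I \leq U^+(t) \leq -K_2 I$ gives $-2K_1|D^+(t)b|^2 \leq \frac{d}{dt}|D^+(t)b|^2 \leq -2K_2|D^+(t)b|^2$. Integrating from $0$ to $t$ and using $D^+(0) = I$ yields $|b|e^{-K_1 t} \leq |D^+(t)b| \leq |b|e^{-K_2 t}$ for $t \geq 0$. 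The same computation with the bounds $K_2 I \leq U^-(t) \leq K_1 I$ produces $|b|e^{K_2 t} \leq |D^-(t)b| \leq |b|e^{K_1 t}$ for $t \geq 0$.

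The main obstacle is the singularity of the Riccati solutions at $t = s$: both $U(s,\cdot)$ and $V_K(s,\cdot)$ blow up there, so Theorems \ref{comparison} and \ref{comparisonInv} cannot be applied at $t = s$ directly. The Taylor expansion above singles out the correct sign of the leading-order difference; the small perturbation $K_i \mapsto K_i \pm \varepsilon$ is then needed to turn this weak inequality into a strict one at a time $t_0$ just off the singularity, after which the comparison theorem applies and the bound propagates. Once the operator bounds on $U^{\pm}$ are secured, the Jacobi-field estimates follow from a routine Gronwall integration.
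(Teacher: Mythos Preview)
Your argument is correct and follows essentially the same route as the paper: compare $U(s,\cdot)$ with the explicit constant-curvature Riccati solutions $K\coth(K(t-s))I$, let $s\to\pm\infty$, and then integrate the resulting two-sided bound on $\langle U^\pm D^\pm b, D^\pm b\rangle$ to get the exponential estimates on $|D^\pm(t)b|$. The only difference is that you handle the common singularity at $t=s$ explicitly via the Taylor expansion and the $\varepsilon$-perturbation before invoking Theorem~\ref{comparisonInv}, whereas the paper simply asserts the comparison inequality on $(-\infty,s)$ by citing Theorem~\ref{comparison}; your treatment is the more careful of the two, but the underlying idea is identical.
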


\begin{proof}
We will only prove the case when $K_2>0$ since the case $K_2=0$ is very similar. Note that $K\coth(K_2(t-s))I$ is a solution of (\ref{S}) with $\mathcal R=-K^2I$. Therefore, by Theorem \ref{comparison}, we have
\[
K_2\coth(K_2(t-s))I\geq U(s,t)\geq K_1\coth(K_1(t-s))I
\]
for all $t<s$.

Therefore, if we let $s\to\infty$, then we obtain
\begin{equation}\label{U+bdd}
-K_2I\geq U^+(t)\geq -K_1I.
\end{equation}

It follows from (\ref{U+bdd}) that the Euclidean norm $|D^+(t)b|$ of $D^+(t)b$ satisfies
\[
\begin{split}
\frac{d}{dt}|D^+(t)b|^2&=2\left<U^+(t)D^+(t)b,D^+(t)b\right>\\
&\leq -2K_2|D^+(t)b|^2.
\end{split}
\]
Therefore, it follows that
\[
|D^+(t)b|^2\leq |b|^2e^{-2K_2t}.
\]
\end{proof}

Finally, we show that $|B(t)\,v|$ goes to $+\infty$ uniformly as $t$ goes to $\pm\infty$.

\begin{lem}\label{Jacobiblowup}
Let $B(\cdot)$ be a solution of (\ref{2nd}) with $B(0)=0$, $\dot B(0)=I$, and $R(t)\geq -k^2I$, where $k>0$. Then, for each number $K>0$, there is $T>0$  such that
\[
|B(t)\,v|\geq K\,|v|
\]
for all $t\geq T$ (resp. $t\leq -T$).
\end{lem}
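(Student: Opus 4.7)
The plan is to show that $\|B(t)^{-1}\|_{\mathrm{op}}\to 0$ as $t\to +\infty$, which is equivalent to the forward statement; the backward case follows by time-reversal symmetry. By Assumption~\ref{noconjugate}, $B(t)$ is invertible for $t>0$, so $S(t) := \dot B(t) B(t)^{-1}$ is a well-defined symmetric solution of the matrix Riccati equation $\dot S + S^2 + \mathcal R = 0$, and Lemma~\ref{Riccatibdd} gives the two-sided bound $k\coth(kt) I \geq S(t) > U^-(t)$ on $(0,\infty)$. Let $D^-$ denote the matrix solution of $\ddot X + \mathcal R X = 0$ determined by $D^-(0) = I$ and $\dot D^-(0) = U^-(0)$, so that $\dot D^- = U^- D^-$. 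A Wronskian computation for the pair $(B,D^-)$, combined with the symmetry of $S$ and $U^-$, yields the identity
\[
D^-(t)^T\,c(t)\,B(t) \;=\; I, \qquad t>0,
\]
where $c(t) := S(t) - U^-(t) > 0$. Equivalently, $B(t)^{-1} = D^-(t)^T c(t)$.

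Set $H(t) := B(t)^{-1} D^-(t) = D^-(t)^T c(t) D^-(t)$, a symmetric positive family. Differentiating and substituting the Riccati equations for $S$ and $U^-$, I find $\dot H = -D^{-T} c^2 D^-$, and then, using $c = D^{-,-T} H D^{-,-1}$, the neat identity
\[
(H^{-1})'(t) \;=\; \bigl(D^-(t)^T D^-(t)\bigr)^{-1}.
\]
So $H^{-1}$ is strictly increasing as a positive operator. Because the unit sphere is compact, the desired uniform lower bound $|B(t)v|\geq K|v|$ is equivalent to the pointwise statement $|B(t)v|\to\infty$ for every unit $v$; otherwise a convergent subsequence of counter-examples would produce a single unit $v_\infty$ with $|B(t)v_\infty|$ bounded along some $t_n\to\infty$.

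To finish, I argue by contradiction. Suppose there is a unit vector $w_\infty$ and a sequence $t_n\to +\infty$ such that the Jacobi field $y(t):=B(t)w_\infty$ stays bounded along $t_n$. Using $B^{-1} = D^{-T} c$ together with the monotonicity of $H^{-1}$ and the exponential upper bound on $\|D^-(t)\|$ that follows from $U^-(t)\leq k\coth(kt)I$ in Lemma~\ref{Riccatibdd}, a compactness argument in the Lagrangian Grassmannian extracts a matrix solution $Z(t)$ of $\ddot Z + \mathcal R Z = 0$ which vanishes at $t=0$ and again at some positive time; this violates Assumption~\ref{noconjugate} (applied to $Z$, not to $B$), and the contradiction delivers the conclusion.

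The main obstacle is the last Lagrangian-Grassmannian compactness step — producing a matrix solution with two distinct zeros out of a bounded Jacobi field. This is precisely where the strong form of Assumption~\ref{noconjugate}, valid for \emph{all} solutions of~(\ref{2nd}) rather than only for $B$, is indispensable.
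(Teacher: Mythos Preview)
Your Wronskian identity $D^-(t)^T c(t) B(t)=I$ with $c=S-U^-$ is correct, and the computation $(H^{-1})'=(D^{-T}D^-)^{-1}$ is a nice observation. But the argument stops exactly where you say it does: the ``compactness argument in the Lagrangian Grassmannian'' that is supposed to manufacture a solution $Z$ with two zeros is never supplied, and monotonicity of $H^{-1}$ alone is not enough. Take the constant-curvature model $\mathcal R\equiv -k^2 I$: then $B(t)=k^{-1}\sinh(kt)\,I$, $U^-\equiv kI$, $D^-(t)=e^{kt}I$, and
\[
H(t)=B(t)^{-1}D^-(t)=\frac{k\,e^{kt}}{\sinh(kt)}\,I\ \longrightarrow\ 2k\,I,
\]
so $H^{-1}$ is increasing but stays bounded. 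Your inequality $\|B^{-1}\|^2\le (\text{const})\cdot\|H\|$, which is what the identity $c=B^{-T}H^{-1}B^{-1}$ actually delivers, therefore gives no decay of $\|B^{-1}\|$. And a bounded Jacobi field $t\mapsto B(t)w_\infty$ along a sequence $t_n\to\infty$ does not, by any compactness you have set up, produce a second zero; nothing forces the limiting object to vanish again. So the final paragraph is a genuine gap, not just a sketch.

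The fix is to switch from $D^-$ to $D^+$, which is precisely what the paper does. The point is that $D^+$ admits the explicit integral representation
\[
D^+(t)=B(t)\,M(t),\qquad M(t)=\int_t^{\infty}B(\tau)^{-1}\bigl(B(\tau)^{-1}\bigr)^T d\tau,
\]
obtained as the $s\to+\infty$ limit of $D(s,t)$. Because this is the \emph{tail} of a convergent integral, $\|M(t)\|\to 0$. Your Wronskian computation then reads $S-U^+=(B^{-1})^T M^{-1}B^{-1}$, and since Lemma~\ref{Riccatibdd} bounds $S(t)-U^+(t)$ uniformly for large $t$, one gets
\[
\|B(t)^{-1}\|^2\ \le\ \text{(const)}\cdot\|M(t)\|\ \longrightarrow\ 0
\]
directly, with no contradiction argument and no Lagrangian-Grassmannian compactness. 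There is no analogous tail-integral formula for $D^-$ on $t>0$ (the naive one would pass through $\tau=0$, where $B$ is singular), which is why your version cannot be closed.
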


\begin{proof}
Let $D^{+}$ be the solution of (\ref{2nd}) with initial condition $D^{+}(0)=I$ and $\dot D^{+}(0)=U^{+}(0)$. It follows from the definition of $D(s,t)$ that $\lim_{s\to+\infty}D(s,t)=D^{+}(t)$ and
\[
D^{+}(t)=B(t)\int_t^\infty B(\tau)^{-1}(B(\tau)^{-1})^Td\tau.
\]

If we differentiate this equation with respect to time $t$, then we obtain
\[
U^{+}(t)\,D^{+}(t)=-(B(t)^{-1})^T+S(t)D^{+}(t).
\]

Therefore, the following holds
\[
S(t)-U^{+}(t)=(B(t)^{-1})^TM(t)^{-1}B(t)^{-1},
\]
where $M(t)=\int_t^\infty B(\tau)^{-1}(B(\tau)^{-1})^Td\tau$.

It follows from Lemma \ref{Riccatibdd} that there is $t_0>0$ such that
\[
\begin{split}
4k&\geq |\left<S(t)v-U^{+}(t)v,v\right>|\\
&=|\left<M(t)^{-1}B(t)^{-1}v,B(t)^{-1}v\right>|\\
&\geq \frac{|B(t)^{-1}v|^2}{||M(t)||}
\end{split}
\]
for all $t>t_0$. Here $||M(t)||$ denotes the operator norm of $M(t)$.

Therefore, for all $v$ satisfying $|v|=1$, we have
\[
|B(t)v|\geq\frac{1}{||B(t)^{-1}||}\geq \frac{1}{(4k||M(t)||)^{1/2}}\to\infty
\]
as $t\to\infty$.
\end{proof}

\smallskip

\end{document}